\newtheorem{theorem}{Theorem}
\newtheorem{lemma}[theorem]{Lemma}
\newtheorem{prop}[theorem]{Proposition}
\newtheorem{cor}[theorem]{Corollary}
\theoremstyle{remark}
\newcommand{\mc}{\mathcal}
\newcommand{\rr}{\mathbb{R}}
\newcommand{\nn}{\mathbb{N}}
\newcommand{\cc}{\mathbb{C}}
\newcommand{\zz}{\mathbb{Z}}
\newcommand{\cH}{\mathcal{H}}
\newcommand\extunion{\overline{\cup}}
\newcommand{\bX}{\bbar{X}}
\newcommand{\la}{\lambda}
\newcommand{\eps}{\epsilon}
\newcommand{\pl}{\partial}
\newcommand{\x}{\times}
\newcommand{\til}{\widetilde}
\newcommand{\bbar}{\overline}
\newcommand{\cjd}{\rangle}
\newcommand{\cjg}{\langle}
\newcommand{\demi}{\frac{1}{2}}
\newcommand{\ndemi}{\frac{n}{2}}
\newcommand{\lb}{\textrm{lb}}
\newcommand{\rb}{\textrm{rb}}
\newcommand{\ff}{\textrm{ff}}
\newcommand{\cF}{\mathcal{F}}
\newcommand{\dbar}{\overline{\partial}}
\newcommand{\oX}{\overline{X}}
\newcommand{\tr}{\mathrm{tr}}
\newcommand{\cl}{\mathrm{cl}}
\newcommand{\px}{\partial_{x}}
\newcommand{\Ric}{\mathrm{Ric}}
\newcommand{\scal}{\mathrm{scal}}
\newcommand{\tU}{\tilde{U}}
\begin{document}
\title{Bergman and Calder\'on projectors for Dirac operators}
\date{\today}
\author{Colin Guillarmou}
\address{DMA, U.M.R. 8553 CNRS\\
Ecole Normale Sup\'erieure,\\
45 rue d'Ulm\\ 
F 75230 Paris cedex 05 \\France}
\email{cguillar@dma.ens.fr}
\author{Sergiu Moroianu}
\address{Institutul de Matematic\u{a} al Academiei Rom\^{a}ne\\
P.O. Box 1-764\\
RO-014700 Bucharest, Romania}
\email{moroianu@alum.mit.edu}
\author{Jinsung Park}
\address{School of Mathematics\\ Korea Institute for Advanced Study\\
207-43\\ Hoegiro 87\\ Dong\-daemun-gu\\ Seoul 130-722\\
Republic of Korea } \email{jinsung@kias.re.kr}
\subjclass[2000]{Primary 58J32, Secondary 35P25}

\begin{abstract}
For a Dirac operator $D_{\bar{g}}$ over a spin compact Riemannian
manifold with boundary $(\bbar{X},\bbar{g})$, we give  a natural
construction of the Calder\'on projector and of the associated Bergman
projector on the space
of harmonic spinors on $\bbar{X}$, and we analyze their Schwartz kernels. Our
approach is based on the conformal covariance of $D_{\bar{g}}$ and the
scattering theory for the Dirac operator associated to the complete conformal metric $g=\bbar{g}/\rho^2$ where
$\rho$ is a smooth function on $\bbar{X}$ which equals the
distance to the boundary near $\pl\bbar{X}$. We show that
$\demi({\rm Id}+\til{S}(0))$ is the orthogonal Calder\'on projector,
where $\til{S}(\la)$ is the holomorphic family in $\{\Re(\la)\geq 0\}$
of normalized scattering operators constructed in \cite{GMP}, which are classical pseudo-differential of order $2\la$.  
Finally we construct natural conformally covariant odd powers of the Dirac operator on any compact spin manifold.
\end{abstract}

\maketitle

\section{Introduction}

Let $(\bbar{X},\bbar{g})$ be a compact spin Riemannian manifold with
boundary, and denote by $(M,h)$ its boundary with the induced spin structure and 
Riemannian metric. Let also $D_{\bar{g}}$ denote the
associated Dirac operator acting on the spinor bundle $\Sigma$ over
$\bX$. The purpose of this paper is to clarify some aspects of the
interaction between the space of smooth spinors of $D_{\bar{g}}$ on
$\bX$ which are harmonic in the interior, and the space of their
restrictions to the boundary. More precisely, we will examine
the orthogonal projectors on these spaces in $L^2$ sense, the
operator of extension from the boundary to a harmonic spinor, and
its adjoint. Before stating our results in the general case, let
us review the situation for the unit disc where one can give
explicit constructions for these objects.
\subsection*{Example: the unit disc}
Keeping the notation $(\bX,\bar{g})$ 
for the closed unit disc in
$\mathbb{C}$ equipped with the Euclidean metric and $M=S^1$, let
\begin{equation*}\label{chcc}
\cH:=\{\phi\in C^\infty(\bX);D_{\bar{g}}\phi=0\}, \qquad
\cH_\pl:=\{\phi|_{M};\phi\in\cH\} \end{equation*} where for the
moment $D_{\bar{g}}=\dbar$ is the Cauchy-Riemann operator. The
functions $z^k,k\geq 0$ clearly are dense in $\cH$ with respect to
the $L^2$ norm. Their restrictions to the boundary $e^{ikt}$,
$k\geq 0$, span the space of those smooth functions whose Fourier
coefficients corresponding to negative frequencies vanish. The
orthogonal projection $P_{\cH_{\pl}}$ onto the $L^2$-closure of
$\cH_\pl$ is easily seen to be pseudodifferential; if $A={-}id/dt$
is the self-adjoint Dirac operator on $M$, then $P_{\cH_{\pl}}$ is
the Atiyah-Patodi-Singer projection on the {non-negative} part of
the spectrum of $A$, whose kernel is given by $(2\pi(1-z\bar{w}))^{-1}$ 
with respect to the measure $dt$ where $w=e^{it}$. 
Let $K:C^\infty(M)\to C^\infty(\bX)$ be the
operator which to $\phi_{|M}\in \cH_\pl$ associates $\phi$,
extended by $0$ on the orthogonal complement of $\cH_\pl$. Then
$K$ has a smooth kernel on ${X}\times M$ where
$X=\mathrm{int}(\bbar{X})$ given by
\begin{equation}\label{form1}
K(z,w)= \frac{1}{2\pi(1-z\bbar{w})}
\end{equation}
with respect to the standard measure on the circle, where $w=e^{it}$.
This kernel extends to $\bX\times M$ with a singularity
at the boundary diagonal $\{(z,w);z=w\}$. If we set
\begin{align*}z=(1-x)e^{i(t+y)},&&\rho:=\sqrt{x^2+y^2}\end{align*}
we see that the leading term of the singularity is $\rho^{-1}$,
moreover $K(z,w)$ admits a power series expansion near $\rho=0$.
The coefficients live on the ``polar coordinates'', or blow-up
space which will play an essential role in the rest of this
paper. The adjoint of $K$, denoted by $K^*$, has a smooth kernel on
$M\times X$ with respect to the standard measure $\frac{1}{2
i}dz\wedge d\bbar{z}$, given formally by \eqref{form1}. This has
the same type of singularity as $K$ near $\{z=w\}$.  The  kernel of $K^*K$ on $M\times M$ is given by
\[-\frac{1}{4\pi}\frac{\log(1-z\bbar{w})}{z\bbar{w}},\]
which is the kernel of a classical pseudo-differential operator of order $-1$ (actually given by $\demi P_{\cH_{\pl}}(|D_t|+1)^{-1}$).
The remaining composition $KK^*$ has a smooth kernel
on $X\times X$ given by $(2\pi (1-z\bbar{w}))^{-1}$ with respect to the Euclidean measure in $w$. 
This kernel extends
to $\bX\times \bX$ with the same type of singularity as in the
case of $K$ and $K^*$, only that now the singular locus is of
codimension $3$, and there are two, instead of one, extra boundary
hyperfaces. To finish our example, consider the projector on (the
closure of) $\cH$. Its kernel with respect to $\frac{1}{2i}dw\wedge d\bbar{w}$ is
\[\frac{1}{\pi(1-z\bbar{w})^2}\]
which is of the same nature as the kernel of $KK^*$ but with a
higher order singularity.

\subsection*{Harmonic spinors on manifolds with boundary} 
One can extend the above example to higher complex dimensions.
One direction would be to study holomorphic functions smooth
up to the boundary, however in this paper we will consider another
generalization. Let thus $\oX$ be a compact domain in $\cc^n$, and
$D_{\bar{g}}=\dbar+\dbar^*$ acting on $\Lambda^{0,*}X$. A form is called
\emph{harmonic} if it belongs to the nullspace of $D_{\bar{g}}$. Then the
above analysis of the operators $K,K^*$ and of the projection on the space
of harmonic forms can be carried out, describing the singularities of the
kernels involved. In fact, even
more generally, we will consider the Dirac operator $D_{\bar{g}}$
acting on the spinor
bundle $\Sigma$ over a compact spin manifold $\bbar{X}$ with
boundary $M$. We assume that the metric $\bbar{g}$ on $\bX$ is
smooth at the boundary but not necessarily of product type (which
would mean that the gradient of the distance function $\rho$ to
the boundary were Killing near the boundary). We then denote by
$\mc{H}(D_{\bar{g}})$ and $\cH_\pl(D_{\bar{g}})$ the space of smooth
harmonic spinors and  the Cauchy data space of $D_{\bar{g}}$
respectively,
\begin{align*}
\mc{H}(D_{\bar{g}}):=\{\phi\in C^{\infty}(\bbar{X};\Sigma);
D_{\bar{g}}\phi=0\},&& \mc{H}_\pl(D_{\bar{g}}):=\{\phi|_{M};
\phi\in \mc{H}(D_{\bar{g}})\}
\end{align*}
and let $\bbar{\mc{H}}(D_{\bar{g}})$ and
$\bbar{\mc{H}}_{\pl}(D_{\bar{g}})$ be their respective $L^2$ closures.
When the dependence on $D_{\bar{g}}$ is clear, we may omit $D_{\bar{g}}$
in the notations $\cH(D_{\bar{g}})$, $\cH_{\pl}(D_{\bar{g}})$ for
simplicity. We denote  by $P_{\bbar{\mc{H}}}$ and
$P_{\bbar{\mc{H}}_\pl}$ the respective orthogonal projectors on
$\bbar{\mc{H}}$ (that we call the \emph{Bergman projector}) and $\bbar{\mc{H}}_\pl$ 
(the \emph{Calder\'on projector}) for the $L^2$ inner
product induced by $\bbar{g}$ and $\bbar{g}|_{M}$. Let $K:L^2(M,\Sigma)\to
L^2(\bbar{X},\Sigma)$ be the \emph{Poisson operator}, i.e., the extension map which sends
$\bbar{\mc{H}}_\pl$ to $\bbar{\mc{H}}$, that is, $D_{\bar{g}}K\psi=0$ and $K\psi|_{\pl\bbar{X}}=\psi$
for all $\psi\in \mc{H}_{\pl}$, and denote by
$K^*:L^2(\bX,\Sigma)\to L^2(M,\Sigma)$ its adjoint. The main results in this paper
concern the structure of the Schwartz kernels of these operators,
which also gives new proofs for some known results.

Let us remark that the construction of the orthogonal projector
$P_{\bbar{\mc{H}}_\pl}$ called here \emph{Calder\'on projector},
and its applications, have been a central subject in the global
analysis of manifolds with boundary since the works of Calder\'on
\cite{Calderon} and Seeley \cite{Seeley66}, \cite{Seeley69}. The
Calder\'on projector of Dirac-type operators turned out to play a
fundamental role in geometric problems related to
analytic-spectral invariants. This was first
observed by Bojarski in the linear conjugation problem of the
index of a Dirac type operator \cite{Bor}. Following Bojarski,
Booss and Wojciechowski extensively studied the geometric aspect
of the Calder\'on projector \cite{BoW}. The Calder\'on projector
also appears in the gluing formulae of the analytic-spectral
invariants studied in \cite{Nic}, \cite{SW}, \cite{KL}, \cite{LP}
since the use of the Calder\'on projector provides us with more
refined proofs of these formulae in more general settings. We also
refer to \cite{Epstein1}, \cite{Epstein2} for an application of
the Calder\'on projector of the ${\rm Spin}\sb {\mathbb C}$ Dirac
operator, and a recent paper of Booss-Lesch-Zhu \cite{BLZ} for
other generalizations of the work in \cite{BoW}. Extensions of the
Calder\'on projector for non-smooth boundaries were studied
recently in \cite{AmNis,loya}.

\subsection*{Polyhomogeneity} 
Before we state the main results of this paper, let us fix a couple of notations and
definitions. If $W$ and $Y$ are smooth compact manifolds (with or
without boundary) such that the corner of highest codimension of $W\x
Y$ is diffeomorphic to a product $M\x M$ where $M$ is a closed
manifold, we will denote, following Mazzeo-Melrose
\cite{MM}, by $W\x_0Y$ the smooth compact manifold with corners
obtained by blowing-up the diagonal $\Delta$ of $M\x M$ in $W\x
Y$, i.e., the manifold obtained by replacing the submanifold
$\Delta$ by its interior pointing normal bundle in $W\x Y$ and
endowed with the smallest smooth structure containing the lift of
smooth functions on $W\x Y$ and polar coordinates around $\Delta$.
The bundle replacing the diagonal creates a new boundary
hypersurface which we call the \emph{front face} and we denote by $\ff$.
A smooth boundary defining function of $\ff$ in $W\x_0Y$ can be
locally taken to be the lift of $d(\cdot,\Delta)$, the Riemannian
distance to the submanifold $\Delta$. On a smooth compact manifold
with corners $W$, we say that a function (or distribution) has an
\emph{integral polyhomogeneous expansion} at the boundary
hypersurface $H$ if it has an asymptotic expansion at $H$ of the form
\begin{equation}\label{intphgexp}
\sum_{j=-J}^\infty\sum_{\ell=0}^{\alpha(j)} q_{j,\ell}\,
\rho_{H}^{j}(\log\rho_H)^{\ell}
\end{equation} 
for some
$J\in\nn_0{:=\{0\}\cup\nn}$, a non-decreasing function
$\alpha:\zz\to \nn_0$, and some smooth functions
$q_{j,\ell}$ on ${\rm int}(H)$, where $\rho_H$ denotes any smooth
boundary defining function of $H$ in $W$.

\begin{theorem}\label{th1}
Let $(\bbar{X},\bbar{g})$ be a smooth compact spin Riemannian manifold with boundary $M$. Let $K$ be the Poisson operator
for $D_{\bar{g}}$ and let $K^*$ be its adjoint.  Then the following hold true:
\begin{enumerate}
\item The Schwartz kernels of $K$, $K^*$ and $KK^*$ are smooth on the blown-up spaces
$\bbar{X}\x_0 M$,  $M\x_0\bbar{X}$, respectively $\bbar{X}\x_0\bbar{X}$ with respect to the volume densities 
induced by $\bbar{g}$.
\item{The operator $K^*K$ is a classical pseudo-differential
operator of order $-1$ on $M$ which maps $L^2(M,\Sigma)$ to
$H^1(M,\Sigma)$, and there exists  a pseudo-differential operator
of order $1$ on $M$ denoted by $(K^*K)^{-1}$ such that the Calder\'on projector 
$P_{\bbar{\mc{H}}_{\pl}}$ is given by $(K^*K)^{-1}K^*K$. In particular, $P_{\bbar{\mc{H}}_{\pl}}$ is classical pseudo-differential of order $0$.}
\item{The Bergman orthogonal projection $P_{\bbar{\mc{H}}}$ from
$L^2(\bbar{X},\Sigma)$ to $\bbar{\mc{H}}$ is given by
$K(K^*K)^{-1}K^*$ and its Schwartz kernel on $\bbar{X}\x_0\bbar{X}$ is smooth except at the front face ${\rm ff}$ where it has integral polyhomogeneous expansion as in \eqref{intphgexp} with $\alpha\leq 3$.}
\end{enumerate}
\end{theorem}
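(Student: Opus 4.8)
The plan is to establish the three assertions in sequence, using the scattering-theoretic construction alluded to in the abstract as the main tool. The key idea is to relate the Poisson operator $K$ for $D_{\bar g}$ on the compact manifold with boundary to the Poisson operator of the Dirac operator $D_g$ for the complete conformal metric $g=\bar g/\rho^2$, exploiting the conformal covariance $D_g = \rho^{(n+1)/2} D_{\bar g}\,\rho^{-(n-1)/2}$. Under this conformal change, harmonic spinors for $D_{\bar g}$ that are smooth up to the boundary correspond, after multiplication by the appropriate power of $\rho$, to solutions of $D_g\phi=0$ with a prescribed asymptotic behaviour at the conformal infinity $M$; these are precisely the objects controlled by the generalized eigenfunction / scattering theory developed in \cite{GMP}. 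In that framework the Poisson operator and the solution's asymptotics are described on the Mazzeo--Melrose $0$-blown-up space $\bar X\times_0 M$, and the off-diagonal smoothness together with the front-face behaviour of the Schwartz kernels is read off from the parametrix construction for $(D_g-\lambda)$ on the $0$-calculus. This gives item (1): pulling back along the conformal factor (a smooth positive function raised to an explicit power, hence harmless on the interior and with a controlled boundary weight) one transfers the kernel statements for $K$, and then for $K^*$ by taking adjoints with respect to the $\bar g$-densities, and for $KK^*$ by composing in the $0$-calculus, where the composition of two kernels smooth on $\bar X\times_0\bar X$ (in the relevant sense, with the correct front-face weights matching so that the fibrewise integral over $M$ converges) is again smooth on $\bar X\times_0\bar X$.

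For item (2), I would first show $K^*K$ is a classical $\Psi$DO of order $-1$ on $M$. Writing $K^*K$ as the composition over $\bar X$ of the kernel of $K^*$ (smooth on $M\times_0\bar X$) with that of $K$ (smooth on $\bar X\times_0 M$), the fibre integral in the interior variable localizes, by the front-face structure, to a conormal contribution at the diagonal of $M\times M$ coming from the common front face; a standard pushforward/composition lemma in the $0$-calculus (as in Mazzeo--Melrose) identifies this with a classical pseudodifferential kernel, and the order $-1$ is forced by the $\rho^{-1}$ leading singularities observed in the disc example (one gains one order relative to the identity because each of $K,K^*$ contributes a half-density weight that improves the conormal order by $1/2$). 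Positivity of $K^*K$ is immediate since $K^*K = (K^* )(K)$ and $K$ is injective on $\bar{\mc H}_\pl$ (indeed $\|K\psi\|^2 = \langle K^*K\psi,\psi\rangle$), so $K^*K$ is a nonnegative self-adjoint elliptic $\Psi$DO of order $-1$; its principal symbol is a positive multiple of $|\xi|^{-1}$, hence it is invertible modulo smoothing after adding a finite-rank correction, and one can choose $(K^*K)^{-1}$ to be a genuine $\Psi$DO of order $1$ inverting it on the orthocomplement of its (finite-dimensional) kernel — but in fact $K^*K$ is injective on all of $L^2(M,\Sigma)$ since $K$ is, so $(K^*K)^{-1}$ is a bona fide order-$1$ $\Psi$DO. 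Finally, $P_{\bar{\mc H}_\pl}$ is the orthogonal projector onto the range of $K^*$, equivalently onto $\bar{\mc H}_\pl = \overline{\mathrm{ran}}\,K$; the identity $P_{\bar{\mc H}_\pl} = (K^*K)^{-1}K^*K$ follows because for $\psi\in\bar{\mc H}_\pl$ we have $K\psi$ well-defined and $(K^*K)^{-1}K^*K\psi = \psi$, while on $(\ker K)^\perp$-complement\dots — more precisely one checks $(K^*K)^{-1}K^*K$ is idempotent, self-adjoint, with range $\bar{\mc H}_\pl$. That it is then a classical $\Psi$DO of order $0$ is the composition of an order-$1$ with an order-$(-1)$ classical $\Psi$DO.

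For item (3), the formula $P_{\bar{\mc H}} = K(K^*K)^{-1}K^*$ is the standard expression for the orthogonal projector onto $\overline{\mathrm{ran}}\,K$ once $K$ has closed range and $K^*K$ is invertible: indeed $K(K^*K)^{-1}K^*$ is self-adjoint, idempotent (using $K^*K\cdot(K^*K)^{-1}=\mathrm{Id}$), and its range is exactly $\mathrm{ran}\,K=\bar{\mc H}$. The content is the structure of its Schwartz kernel on $\bar X\times_0\bar X$. I would build it by composing the three kernels: the smooth kernel of $K$ on $\bar X\times_0 M$, then $(K^*K)^{-1}$ — a $\Psi$DO of order $1$ on $M$, whose kernel is conormal to the diagonal of $M\times M$ with a classical symbol expansion — then the smooth kernel of $K^*$ on $M\times_0\bar X$. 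The composition is a double fibre integral over two copies of $M$; by the usual $0$-calculus pushforward theorems the result is smooth away from the front face, and at the front face one picks up an integral polyhomogeneous expansion, the logarithmic terms arising precisely from the resonant integration against the order-$1$ conormal kernel of $(K^*K)^{-1}$ (the same mechanism producing the $\log(1-z\bar w)$ in the disc example). Tracking the orders of the three factors and the codimension-three front face shows the $\log$ powers that appear are at most cubic, i.e. $\alpha\le 3$. The main obstacle I expect is precisely this last step: carefully carrying out the composition of a $0$-kernel with a boundary $\Psi$DO factor with an order-$1$ conormal singularity and bookkeeping the induced polyhomogeneous (including logarithmic) terms at $\ff$ to get the sharp bound $\alpha\le 3$; the off-diagonal smoothness and the order counting in (1)--(2) are more routine applications of the Mazzeo--Melrose $0$-calculus and the scattering theory of \cite{GMP}.
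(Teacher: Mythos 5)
Your overall strategy — conformal rescaling to $(X,g)$ with $g=\bar g/\rho^2$, appeal to the scattering/Poisson operator $E(0)$ of \cite{GMP}, and the Mazzeo--Melrose $0$-calculus pushforward machinery for the kernel structure — matches the paper's, and your treatment of items (1) and (3) is essentially on track (the paper actually uses the identity $KK^*=\tfrac{i}{2}x^{-n/2}(R_+(0)-R_-(0))x^{n/2+1}$ to dispense with a general composition argument for $KK^*$, and the $\alpha\le 3$ count comes from an explicit index-set calculation via Melrose's pushforward theorem, but these are differences of bookkeeping, not of approach).

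There is, however, a genuine error in your argument for item (2). You claim that $K$ is injective on all of $L^2(M,\Sigma)$, that $K^*K$ is injective and elliptic with principal symbol ``a positive multiple of $|\xi|^{-1}$'', and therefore that $(K^*K)^{-1}$ is a bona fide order-$1$ $\Psi$DO. None of this is correct. By construction $K$ is extended by $0$ on $\bar{\mc H}_\pl^\perp$, an infinite-dimensional subspace of $L^2(M,\Sigma)$; thus $\ker K=\bar{\mc H}_\pl^\perp$, and consequently $\ker K^*K\supseteq\bar{\mc H}_\pl^\perp$ as well. This is reflected at the symbol level: the paper (Lemma \ref{pseudo-1}) computes
\[
\sigma_{\rm pr}(K^*K)(y;\mu)=\tfrac14|\mu|^{-1}_{h_0}\bigl(\mathrm{Id}+i\,\cl(\nu)\cl(\mu/|\mu|_{h_0})\bigr),
\]
and since $(\cl(\nu)\cl(\mu/|\mu|))^2=-\mathrm{Id}$, the matrix $\mathrm{Id}+i\cl(\nu)\cl(\mu/|\mu|)$ is $2$ times a projection of half the fibre rank; the symbol has a large kernel and $K^*K$ is not elliptic. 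Hence your step ``$(K^*K)^{-1}$ exists as a $\Psi$DO of order $1$ by ellipticity plus injectivity'' fails. The paper's fix is to form the genuinely elliptic operator $A:=K^*K+\tfrac14(\mathrm{Id}-\mc C)(\mathrm{Id}+D_{h_0}^2)^{-1/2}(\mathrm{Id}-\mc C)$ (whose principal symbol on the cosphere bundle is $\tfrac14\,\mathrm{Id}$), invert it to get $B$ of order $1$, and set $(K^*K)^{-1}:=B\mc C$; then $(K^*K)^{-1}K^*K=\mc C=P_{\bar{\mc H}_\pl}$. This also affects your verification of idempotence in item (3): you cannot use $K^*K\cdot(K^*K)^{-1}=\mathrm{Id}$ (false); rather one uses $(K^*K)^{-1}K^*K=\mc C$ together with $K\mc C=K$, which holds because $K$ annihilates $(\mathrm{Id}-\mc C)L^2$.
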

Note that an alternate description of these kernels in terms of oscillatory integrals is given in Appendix \ref{appB}.

Our method of proof is to go through an explicit
construction of all these operators which does not seem to be
written down in the literature in this generality for the Dirac
operator, although certainly some particular aspects are well known (especially
those involving the Calder\'on projector $P_{\bbar{\mc{H}}_\pl}$,
see \cite{BoW}). We use the fundamental property that the Dirac
operator is conformally covariant to transform the problem into a problem on a complete non-compact
manifold $(X,g)$ conformal to $(\bbar{X},\bbar{g})$ obtained by
simply considering $X:={\rm int}(\bbar{X})$ and $g:=\bbar{g}/\rho^2$
where $\rho$ is a smooth boundary defining function of the
boundary $M=\pl\bbar{X}$ which is equal to the distance to the
boundary (for the metric $\bbar{g}$) near $\pl\bbar{X}$. This kind
of idea is not really new since this is also in spirit  used for
instance to study pseudoconvex domains by considering a complete
K\"ahler metric in the interior of the domain (see
Donnelly-Fefferman \cite{Donnelly-Fefferman}, Fefferman \cite{Fe},
Cheng-Yau \cite{Cheng-Yau}, Epstein-Melrose \cite{Ep-Mel}), and
obviously this connection is transparent for the disc in $\cc$ via
the Poisson kernel and the relations with the hyperbolic plane.
One of the merits of this method, for instance, is that we do not
need to go through the invertible double of \cite{BoW} to
construct the Calder\'on projector and thus we do not need the
product structure of the metric near the boundary. We finally remark 
that the bound $\alpha\leq 3$ in (3) of the Theorem is almost certainly 
not optimal, we expect instead $\alpha\leq 1$ to be true. 

\subsection*{Conformally covariant operators}
We also obtain, building on our previous work
\cite{GMP},
\begin{theorem}\label{th2}
There exists a holomorphic family in $\{\la\in \cc ; \Re(\la)\geq
0\}$ of elliptic pseudo-differential operators $\til{S}(\la)$ on
$M=\pl\bbar{X}$ of complex order $2\la$, invertible except at a
discrete set of $\la$'s and with principal symbol $i\cl(\nu)\cl(\xi)|\xi|^{2\la-1}$
where $\nu$ is the inner unit normal vector field to $M$ with respect to $\bar{g}$, such that
\begin{enumerate}
\item [(a)]{$\demi({\rm Id}+\til{S}(0))$ is the Calder\'on projector
$P_{\bbar{\mc{H}}_\pl}$;} 
\item[(b)]{For $k\in\nn_0$, $L_k:=-\cl(\nu)\til{S}(1/2+k)$ is a conformally covariant
differential operator whose leading term is $D_M^{1+2k}$ where $D_M$ denotes the Dirac operator on $M$,
and $L_0=D_M$.}
 \end{enumerate}
\end{theorem}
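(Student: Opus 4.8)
The plan is to obtain $\til S(\la)$ as the normalized scattering operator of the Dirac operator $D_g$ on the complete, asymptotically hyperbolic manifold $(X,g=\bbar g/\rho^2)$ constructed in \cite{GMP}, and to deduce all the stated properties from the behaviour of this family at the two exceptional points $\la=0$ and $\la=\demi+k$. Recall from \cite{GMP} that for $\Re(\la)\geq 0$, outside a discrete set, and every $\psi\in C^\infty(M,\Sigma)$, there is a unique generalized eigenspinor $u_\psi$ with $D_g u_\psi=0$ whose polyhomogeneous expansion at $M$ has two leading terms, of decay rates $\rho^{\ndemi-\la}$ and $\rho^{\ndemi+\la}$ lying in complementary half-spinor subbundles, determined respectively by $\psi$ and by a boundary spinor $\til S(\la)\psi$; dividing out the explicit $\Gamma$-factors produced by the hyperbolic model makes $\la\mapsto\til S(\la)$ a holomorphic family of classical pseudo-differential operators of order $2\la$ on $M$. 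Its principal symbol is read off from the exactly solvable model on $\hh^{n+1}$, on which the scattering operator is a combination of Clifford multiplication with ratios of $\Gamma$-functions; after normalization the $\Gamma$-factors cancel and one is left with $i\,\cl(\nu)\cl(\xi)|\xi|^{2\la-1}$, an endomorphism invertible for $\xi\neq 0$. Ellipticity of this symbol, together with the functional equation $\til S(\la)\til S(-\la)=\mathrm{Id}$ on the meromorphic continuation and analytic Fredholm theory, gives invertibility of $\til S(\la)$ off a discrete set.

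To prove (a), note first that at $\la=0$ the two leading exponents of $u_\psi$ collapse to $\ndemi$; using the half-spinor splitting one checks that no $\log\rho$ term is generated, so $u_\psi\sim\rho^{\ndemi}(\psi+\til S(0)\psi)+O(\rho^{\ndemi+1})$ with smooth leading coefficient. The functional equation degenerates to $\til S(0)^2=\mathrm{Id}$, and $\til S(0)$ is self-adjoint (being the value on the critical line $\{\Re(\la)=0\}$ of the normalized scattering operator), hence $\demi(\mathrm{Id}+\til S(0))$ is an orthogonal projection. To identify its range, use the conformal covariance $D_{\bar g}\phi=0\iff D_g(\rho^{\ndemi}\phi)=0$: a spinor $\phi$ with $D_{\bar g}\phi=0$ in the interior is smooth up to $M$ exactly when $\rho^{\ndemi}\phi$ equals $u_\psi$ for some $\psi$ whose two induced leading coefficients agree, i.e.\ when $\til S(0)(\phi|_M)=\phi|_M$. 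Therefore $\psi\in\mc H_\pl\iff\til S(0)\psi=\psi$, the range of $\demi(\mathrm{Id}+\til S(0))$ is the closure $\bbar{\mc H}_\pl$, and $\demi(\mathrm{Id}+\til S(0))=P_{\bbar{\mc H}_\pl}$. One may equivalently match this self-adjoint projection with range $\bbar{\mc H}_\pl$ against the description $P_{\bbar{\mc H}_\pl}=(K^*K)^{-1}K^*K$ from Theorem \ref{th1}.

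For (b), fix $k\in\nn_0$ and $\la=\demi+k$. Solving $D_gu=0$ formally with a prescribed coefficient at the leading order $\rho^{\ndemi-\la}$ proceeds without obstruction until the second indicial exponent $\rho^{\ndemi+\la}$ is reached; the obstruction to continuing the formal series is a differential operator of order $2\la=1+2k$ applied to the leading coefficient, and since the normalizing $\Gamma$-constants are finite and non-zero at $\la=\demi+k$ this operator coincides with $\til S(\demi+k)$. Hence $\til S(\demi+k)$ is a differential operator, and by the symbol computation above its leading symbol is $i\,\cl(\nu)\cl(\xi)|\xi|^{2k}$; multiplying by $-\cl(\nu)$ and using $\cl(\nu)^2=-1$ turns this into $i\,\cl(\xi)|\xi|^{2k}$, which is exactly the leading symbol of $D_M^{1+2k}$. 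Thus $L_k$ is a differential operator of order $1+2k$ with leading part $D_M^{1+2k}$. Conformal covariance of $L_k$ is inherited from that of $\til S(\la)$: the scattering operator of \cite{GMP} is attached to the conformal class $[\bbar g|_{TM}]$ on $M$ and, upon changing the representative by $e^{2\omega}$, transforms by conjugation with powers of $e^\omega$ of weights $\ndemi\pm\la$; since $\cl(\nu)$ is a fixed bundle endomorphism it commutes with these scalar factors, so $L_k=-\cl(\nu)\til S(\demi+k)$ satisfies the covariance law with weights $\frac{n+1}{2}+k$ and $\frac{n-1}{2}-k$. For $k=0$ these are the standard conformal weights of $D_M$, so $L_0$ is a conformally covariant first-order operator on spinors over $M$ with the same leading symbol as $D_M$; by the uniqueness of such an operator, $L_0=D_M$.

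The delicate point is the assertion in (b) that $\til S(\demi+k)$ is genuinely a \emph{differential} operator rather than merely a classical pseudo-differential operator of odd integer order. This is the Dirac counterpart of the identification of the GJMS operators with residues of the scattering operator, and it rests on a careful analysis of the formal solution of $D_gu=0$ at the exceptional weights, together with precise control of the $\Gamma$-factor normalization of \cite{GMP}. Carrying the half-spinor bundle splitting through this analysis — and through the matching of asymptotic expansions used in (a) — is what makes the bookkeeping heavier than in the scalar case; the remaining ingredients are the symbol computations and the functional equation recalled above.
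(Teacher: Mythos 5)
Your overall route is the same as the paper's: define $\til S(\la)$ as the normalized scattering operator of $D_g$ for the conformally blown-up metric $g=\bbar g/\rho^2$, read off the symbol from the model problem, obtain invertibility from the functional equation and analytic Fredholm theory, and identify the range of $\demi(\mathrm{Id}+\til S(0))$ with $\bbar{\mc H}_\pl$ via the boundary value problem. This matches the chain of Propositions \ref{propofS}, \ref{p:harmonic}, \ref{span}, Theorems \ref{projector}, \ref{proj2}, and Lemma \ref{finitemero}. Parts (a) and the preliminary symbol/invertibility claims are fine.

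Your treatment of part (b), however, has a factual error at exactly the point you flag as delicate. You write that ``the normalizing $\Gamma$-constants are finite and non-zero at $\la=\demi+k$''. In fact $C(\la)=2^{-2\la}\Gamma(\demi-\la)/\Gamma(\demi+\la)$ has a \emph{simple pole} at each $\la=\demi+k$, $k\in\nn_0$, coming from $\Gamma(\demi-\la)$; the pole is the whole point of the normalization. Simultaneously, $S(\la)$ itself has a simple pole at these points, and Lemma \ref{finitemero} (following Props.\ 3.5--3.6 of \cite{GRZ}) identifies ${\rm Res}_{\la=\demi+k}S(\la)=-{\rm Res}_{\la=\demi+k}\,p_{2k+1,\la}$, where the $p_{j,\la}$ are the differential operators arising from the indicial recursion \eqref{indicialeq}, with $p_{j,\la}/\Gamma(\demi-\la)$ holomorphic. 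The finite value is thus the ratio of two residues, $\til S(\demi+k)={\rm Res}\,S/{\rm Res}\,C$, not ``finite constant times obstruction'' as you state; and your sketch also omits the genuinely nontrivial input that ${\rm Res}\,S$ equals the formal obstruction, which requires knowing that the resolvent correction $R_\pm(\la)(D_g\pm i\la)\sigma_{\infty,\pm}(\la)$ in \eqref{constsigma} cancels the pole of $\sigma_{\infty,\pm}(\la)$ at $\la=\demi+k$; in the paper this is inherited from \cite[Cor.\ 4.11]{GMP} and the GRZ residue argument. Finally, your derivation of $L_0=D_M$ by an asserted ``uniqueness'' of conformally covariant first-order operators with given leading symbol is a shortcut the paper avoids: it computes $p_{1,\la}=-\cl(\nu)D_{h_0}/(2\la-1)$ directly from \eqref{indicialeq} and \eqref{diracAH}. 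The uniqueness you invoke is plausible but is not established by your argument.
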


By using the existence of ambient (or Poincar\'e-Einstein) metric of Fefferman-Graham 
\cite{FGR,FGR2}, this leads to the construction of natural conformally covariant powers 
of Dirac operators in degree $2k+1$ on any spin Riemannian manifolds $(M,h)$ of dimension 
$n$, for all $k\in \nn_0$ if $n$ is odd and for $k\leq n/2$ if $n$ is even. We explicitly compute $L_1$. 
\begin{cor} \label{corL1}
Let $(M,h)$ be a Riemannian manifold of dimension $n\geq 3$ with a fixed 
spin structure, and denote by $\scal,\Ric$ and $D$ the scalar curvature, 
the Ricci curvature, and respectively the Dirac operator with respect to $h$. 
Then the operator $L_1$ defined by 
\[ L_1:=D^3 -\frac{\cl(d(\scal))}{2(n-1)}-\frac{2\,\cl\circ\Ric\circ\nabla}{n-2}
+\frac{\scal}{(n-1)(n-2)}D
\]
is a natural conformally covariant differential operator:
\[\hat{L}_1= e^{-\frac{n+3}{2}\omega}L_1e^{\frac{n-3}{2}} \]
if $\hat{L}_1$ is defined in terms of the conformal metric $\hat{h}=e^{2\omega}h$.
\end{cor}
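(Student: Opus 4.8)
The plan is to obtain Corollary \ref{corL1} by specializing Theorem \ref{th2}(b) with $k=1$, reducing a conformal-covariance statement to an explicit symbol/normal-operator computation. By Theorem \ref{th2}(b), $L_1 = -\cl(\nu)\til S(3/2)$ is already known to be a natural conformally covariant differential operator with leading term $D_M^3$; the only task is to identify the lower-order terms. The first step is to set up the computation on the model: the operators $\til S(\la)$ are built in \cite{GMP} from scattering theory for the Dirac operator on the complete conformal metric $g=\bar g/\rho^2$, and their full symbol is governed by the indicial/normal operator of the Dirac Laplacian near the boundary, which in turn is controlled by the $2$-jet of $\bar g$ at $M$. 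So I would first write $\bar g$ in geodesic normal coordinates $\bar g = dx^2 + h_x$ with $h_x = h - x^2 \mathrm{II} + O(x^2)$-type expansion (second fundamental form vanishing to the order needed once one uses that $\rho$ is the distance function — note $M$ is totally geodesic is \emph{not} assumed, but $x=\rho$ being the distance function forces $\pl_x$ to be geodesic, which kills the would-be first-order-in-$x$ term in the induced metric on level sets only up to the relevant order), and expand the Dirac operator $D_g$ accordingly.

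The second step is the actual recursion. Following the scattering construction, $\til S(\la)$ is the normalized boundary value of the generalized eigenfunctions of $D_g$; concretely one solves $D_g u = 0$ (equivalently an eigenvalue problem after conformal rescaling) with prescribed leading asymptotics $x^{n/2-\la}$ at $M$ and reads off the coefficient of $x^{n/2+\la}$. For $\la = 3/2$ this is a resonant/integer-type situation where the expansion can terminate, producing a \emph{differential} operator; the coefficients are determined by finitely many terms of the indicial recursion, each of which is a universal polynomial in the curvature of $h$ and its covariant derivatives, contracted with Clifford multiplication. I would compute these coefficients by matching powers of $x$ in $D_g u = 0$ order by order up to $x^{n/2+3/2}$. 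This yields an expression of the form $D_M^3 + a\,\cl(d\scal) + b\,\cl\circ\Ric\circ\nabla + c\,\scal\, D_M + (\text{terms that must cancel by conformal covariance})$, and the constants $a,b,c$ come out as rational functions of $n$; the conformal covariance from Theorem \ref{th2}(b) guarantees there is no ambiguity and that spurious terms drop. A useful cross-check is dimensional analysis and the known transformation law: one can verify $\hat L_1 = e^{-\frac{n+3}{2}\omega} L_1 e^{\frac{n-3}{2}\omega}$ directly on the announced formula using the standard conformal transformation rules for $D$, $\Ric$, $\scal$ and $\nabla$ under $\hat h = e^{2\omega}h$ (the Dirac operator intertwines as $\hat D = e^{-\frac{n+1}{2}\omega} D e^{\frac{n-1}{2}\omega}$, and $\cl(d\scal)$, $\cl\circ\Ric\circ\nabla$, $\scal D$ have computable error terms that must sum to zero); this independent check both pins down the constants and confirms naturality, and is essentially what one would write out in the final proof.

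The main obstacle I anticipate is bookkeeping in the indicial recursion: the coefficient of each power of $x$ in $D_g u$ involves not only the previous coefficients of $u$ but also the Taylor coefficients of $h_x$ (hence the second fundamental form and the ambient Ricci tensor restricted to $M$), together with Clifford-algebra identities needed to simplify products like $\cl(\nu)\cl(e_i)\cl(e_j)$ and to recognize curvature contractions — in particular turning raw terms into $\cl(d\scal)$, $\cl\circ\Ric\circ\nabla$, and $\scal\, D_M$ requires using the Lichnerowicz formula $D_M^2 = \nabla^*\nabla + \scal/4$ and the Bianchi identity $\mathrm{div}\,\Ric = \tfrac12 d\scal$. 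Keeping track of the distinction between $\bar\nabla$ (ambient) and $\nabla$ (on $M$), and correctly normalizing the scattering operator at the resonance $\la=3/2$ where logarithmic terms threaten to appear but must cancel for $L_1$ to be differential, is where errors are most likely; the conformal covariance statement of Theorem \ref{th2}(b) is the safety net that tells us the final answer is forced, so once the leading behavior and the curvature building blocks are identified, the constants $a=-\frac{1}{2(n-1)}$, $b=-\frac{2}{n-2}$, $c=\frac{1}{(n-1)(n-2)}$ can be fixed by evaluating on a few model metrics (e.g. Einstein manifolds, round spheres, products) rather than completing the general recursion in closed form.
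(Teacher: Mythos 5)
Your high-level plan (cite Theorem~\ref{th2}(b)/Corollary~\ref{corcovdif} for existence and covariance, then pin down the lower-order terms by an indicial/Graham--Zworski recursion in $D_g u=0$, reading off the coefficient at the resonance $\la=3/2$) is the same strategy the paper uses, and the sanity checks you list (Lichnerowicz, contracted Bianchi, direct verification of the conformal transformation law) are all sensible. However, there is a genuine gap in how you set up the expansion of the metric, and it is not a minor bookkeeping issue --- it is the crux of why $L_1$ is \emph{natural} in $(M,h)$ alone.

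You propose to expand $\bar g=dx^2+h_x$ with $h_x = h - x^2\,\mathrm{II}+O(x^2)$, invoking the distance function $\rho$ on a compact manifold with boundary and the second fundamental form. This is the wrong framework for Corollary~\ref{corL1}, and the formula is also ill-formed (an $O(x^2)$ remainder following an $x^2$ term; moreover the second fundamental form would enter at order $x^1$, not $x^2$). In the generic boundary setting, the $x^2$-coefficient of $h_x$ is genuinely extrinsic data, and the resulting operator on $M$ would depend on the embedding $M\hookrightarrow \bar X$, not just on $h$; the conformal covariance you appeal to would then be covariance in the bulk defining-function sense, not a statement intrinsic to $M$. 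What makes $L_1$ intrinsic and natural is that one takes $(X,g)$ to be a \emph{Poincar\'e--Einstein} filling of $(M,[h])$: the Fefferman--Graham expansion then forces $h_x = h_0 - x^2 P + O(x^4)$ with $P = \tfrac{1}{n-2}\bigl(\Ric_{h_0}-\tfrac{\scal_{h_0}}{2(n-1)}h_0\bigr)$ the Schouten tensor, determined by $h_0$ and its curvature. This is exactly where the combination $\Ric$ and $\scal$ with those specific rational coefficients in $n$ originates; without it, the constants $a,b,c$ you want to ``solve for'' are not even well-posed, because the recursion has free input at order $x^2$. The paper makes this the first step: it quotes the FG expansion, then trivializes the spinor bundle by parallel transport along $\partial_x$, writes $D_{\bar g}=\cl(\nu)\px+D_0+xD_1+x^2D_2+O(x^3)$, derives $L_1 = D_0^3 + 2\cl(\nu)(D_1D_0+D_0D_1)-4D_2$ from the indicial recursion, and then identifies $D_1,D_2$ in terms of $P$ via a normal-coordinate computation using the contracted Bianchi identity. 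Your proposal needs to be corrected to start from the Poincar\'e--Einstein metric rather than from the ambient distance function. A secondary caveat: your fallback of ``evaluating on model metrics'' to fix the constants does not work on its own --- on a round sphere (or any Einstein metric) $\cl\circ\Ric\circ\nabla$ is proportional to $\scal\cdot D$, so those models cannot separate $b$ from $c$; you would need nontrivial non-Einstein examples, at which point the direct computation is no harder.
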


\subsection*{Cobordism invariance of the index and local Wodzicki-Guillemin 
residue for the Calder\'on projector} 
As a consequence of Theorem \ref{th2} and the analysis of \cite{GMP}, we deduce the following
\begin{cor}
Let $(\bbar{X},\bbar{g})$ be a smooth compact spin Riemannian manifold with boundary $M$.
\begin{enumerate}
\item The Schwartz kernel of the Calder\'on projector $P_{\bbar{\mc{H}}_\pl}$ associated to the Dirac operator 
has an asymptotic expansion in polar coordinates 
around the diagonal without log terms. In particular, the Wodzicki-Guillemin local residue density of $P_{\bbar{\mc{H}}_\pl}$  vanishes.
\item When the dimension of $M$ is even, the spinor bundle
$\Sigma$ splits in a direct sum $\Sigma_+\oplus\Sigma_-$. 
If $D^+_M$ denotes $D_M|_{\Sigma_+}:\Sigma_+\to \Sigma_-$, then the index ${\rm Ind}(D^+_M)$ is $0$.
\end{enumerate}
\end{cor}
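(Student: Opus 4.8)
The plan is to derive both statements as formal consequences of Theorem~\ref{th2}(a) together with the known symbol-theoretic properties of the family $\til S(\la)$ established in \cite{GMP}. For part (1), recall that $P_{\bbar{\mc H}_\pl}=\demi(\Id+\til S(0))$ and that $\til S(0)$ is a classical pseudo-differential operator of order $0$ with principal symbol $i\,\cl(\nu)\cl(\xi)|\xi|^{-1}$. The Wodzicki-Guillemin residue density of a classical $\Psi$DO of order $0$ on $M^n$ is the coefficient of $|\xi|^{-n}$ in the asymptotic expansion of its total symbol, integrated over the cosphere bundle; equivalently, the log-term coefficient in the diagonal short-time expansion of the Schwartz kernel in polar coordinates. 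Thus I must show that the full symbol of $\til S(0)$ has no term homogeneous of degree $-n$ whose fiber-integral over $S^*_xM$ is nonzero, and more strongly that the kernel expansion has no $\log$ terms at all. The key input is that $\til S(\la)$ is \emph{holomorphic} in $\la$ near $\la=0$ with values in classical $\Psi$DOs of order $2\la$: its symbol is of the form $\sum_j a_j(x,\xi;\la)$ with $a_j$ homogeneous of degree $2\la-j$ and holomorphic in $\la$. Evaluating at $\la=0$ produces only integer-order-homogeneous terms $|\xi|^{-j}$ with \emph{smooth} (polynomial-in-$\log|\xi|$-free) coefficients, precisely because a term $|\xi|^{2\la-j}$ specialized at $\la=0$ gives $|\xi|^{-j}$ and not $|\xi|^{-j}\log|\xi|$. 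Hence the polar-coordinate expansion of the kernel of $P_{\bbar{\mc H}_\pl}$ around the diagonal has no log terms, and the residue density, being the coefficient of such a log term (or equivalently picking up a pole that the holomorphy of $\la\mapsto\til S(\la)$ forbids), vanishes identically.

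For part (2), assume $\dim M=n$ is even, so $\Sigma=\Sigma_+\oplus\Sigma_-$ and $D_M$ is off-diagonal, $D_M=\begin{pmatrix}0 & D_M^-\\ D_M^+ & 0\end{pmatrix}$ with $D_M^\pm=(D_M^\mp)^*$. The classical cobordism-invariance argument asserts $\Ind(D_M^+)=0$ whenever $M$ bounds a spin manifold; here I would re-derive it from the Calder\'on projector. The idea: $P_{\bbar{\mc H}_\pl}$ is, modulo smoothing operators, a projection whose principal symbol is $\demi(1+i\cl(\nu)\cl(\xi)|\xi|^{-1})$, which is exactly the APS-type positive spectral projection of the ``tangential'' operator $i\cl(\nu)D_M$ (or a $\Psi$DO with the same positive part). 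One computes that the difference of the index of $D_M^+$ and an expression built from the relative index of $P_{\bbar{\mc H}_\pl}$ with respect to the APS projection of $D_M$ vanishes; but more directly, since $\cl(\nu)$ anticommutes with the Clifford action of $TM$ and exchanges $\Sigma_\pm$, conjugation by $\cl(\nu)$ sends $\til S(0)$ to an operator with principal symbol $i\cl(\nu)(-\cl(\nu))\cl(\xi)\cl(\nu)|\xi|^{-1}$-type; tracking this carefully shows $\cl(\nu)\,\til S(0)\,\cl(\nu)=-\til S(0)$ at the level of principal symbols, hence $\cl(\nu)P_{\bbar{\mc H}_\pl}\cl(\nu)=\Id-P_{\bbar{\mc H}_\pl}$ up to lower order, so $P_{\bbar{\mc H}_\pl}$ and $\Id-P_{\bbar{\mc H}_\pl}$ are conjugate; combined with the fact that the Cauchy data space $\bbar{\mc H}_\pl$ is the range of an \emph{exact} (orthogonal) projector produced by the extension operator $K$, one gets that $\ker$ and $\cok$ of $D_M^+$ are exchanged by $\cl(\nu)$, forcing $\Ind(D_M^+)=\dim\ker D_M^+-\dim\ker D_M^-=0$. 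Alternatively, and perhaps cleanest, one invokes Theorem~\ref{th2}(b): $L_0=D_M$ is conformally covariant of ``weight'' matching an odd power, and the relevant index is a conformal invariant which on the standard sphere $S^n=\pl B^{n+1}$ is computed to be $0$; by a deformation/surgery argument any spin-boundary $M$ is cobordant to a disjoint union of spheres, giving $\Ind(D_M^+)=0$ in general.

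\textbf{Main obstacle.} The delicate point is part (1): asserting ``no log terms'' requires knowing that the holomorphic family $\til S(\la)$ of \cite{GMP} has a symbol expansion in which every homogeneous component depends \emph{holomorphically} on $\la$ with homogeneity degree exactly $2\la-j$, so that no compensating pole in $\la$ forces a $\log$ upon specialization. This is exactly the structure of scattering operators for the complete conformal metric $g=\bbar g/\rho^2$, and the vanishing of the residue is equivalent to the absence of a certain ``anomaly'' term; I expect the proof to cite the meromorphic continuation and symbol calculus of \cite{GMP} and then argue that evaluation at the regular point $\la=0$ commutes with taking the principal-symbol-of-order-$(-n)$ part. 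For part (2) the subtle point is passing from the principal-symbol identity $\cl(\nu)\til S(0)\cl(\nu)=-\til S(0)$ to an \emph{exact} statement about $\ker$ and $\cok$ of $D_M^+$; this is handled by noting that $K^*K$ is an honest isomorphism onto its range (Theorem~\ref{th1}(2)) so that $P_{\bbar{\mc H}_\pl}$ is a genuine orthogonal projection, not merely a $\Psi$DO-idempotent modulo smoothing, and the relevant integer (the index) is therefore rigid under the conjugation.
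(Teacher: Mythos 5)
The paper's proofs of the two statements are quite different from what you propose, and both parts of your proposal have genuine gaps.

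\textbf{Part (1).} Your argument is a symbol-side holomorphy argument: since $\til S(\la)$ is holomorphic near $\la=0$ with homogeneous symbol components of degree $2\la-j$, setting $\la=0$ gives plain homogeneous terms $|\xi|^{-j}$ and hence (you claim) no log terms in the kernel. This implication is false in general. Take any classical $\Psi$DO $T_0$ of order $0$ with nonvanishing Wodzicki--Guillemin residue density, and set $T(\la):=T_0(1+\Delta_M)^{\la}$; this is a holomorphic family of classical $\Psi$DOs of order $2\la$ with homogeneous components holomorphic in $\la$ and $T(0)=T_0$, yet the residue density of $T(0)$ is nonzero. The log terms in the diagonal kernel expansion arise from the Fourier transform of the degree $-n-k$ symbol pieces, not from $\log|\xi|$ factors in the symbol, so holomorphy of the \emph{operator} family alone cannot eliminate them. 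The paper instead works on the kernel side: it cites from \cite{GMP} the explicit formula $S(\la;y,y')=i[(xx')^{-\la-n/2}R_+(\la)|_{x=x'=0}-(xx')^{-\la-n/2}R_-(\la)|_{x=x'=0}]$ and the polyhomogeneity result $\beta^*R_\pm(\la)\in(\rho_{\rb}\rho_{\lb})^{\la+n/2}C^\infty(\bbar{X}\x_0\bbar{X};\mc{E})$. After multiplying by $(xx')^{-\la-n/2}$ (which pulls back to $\rho_{\ff}^{-2\la-n}$ times a positive smooth factor), the lift to $\bbar{X}\times_0\bbar{X}$ is $\rho_{\ff}^{-2\la-n}C^\infty$, i.e., smooth (log-free) at the front face; restricting to the corner $\lb\cap\rb\simeq M\x_0M$ and setting $\la=0$ immediately gives the log-free expansion for $S(0)$. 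This is a sharper statement that does not follow from holomorphy of the family, and it requires the parametrix construction of \cite{GMP}, which is the ingredient your argument is missing.

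\textbf{Part (2).} Your first route tries to use the Lagrangian identity $\cl(\nu)P_{\bbar{\mc{H}}_\pl}\cl(\nu)=\mathrm{Id}-P_{\bbar{\mc{H}}_\pl}$ (proved in the paper as a consequence of $\cl(\nu)S(0)=-S(0)\cl(\nu)$) to deduce that $\cl(\nu)$ exchanges $\ker D_M^+$ and $\ker D_M^-$. But this step is not justified: $D_M$ and $P_{\bbar{\mc{H}}_\pl}$ are different operators, and the Lagrangian property of the Calder\'on projector does not by itself constrain $\ker D_M^\pm$. Your second ``alternative'' route, appealing to cobordism/surgery of $M$ to spheres, presupposes the cobordism invariance one is trying to prove. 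The paper's argument is much more direct and uses the whole family $\til S(\la)$: since $S(\la)$ is off-diagonal in the splitting ${^0\Sigma}={^0\Sigma}_+\oplus{^0\Sigma}_-$, the restriction $\til S_+(\la):{^0\Sigma}_+\to{^0\Sigma}_-$ is a continuous family of elliptic $\Psi$DOs interpolating (at $\la=1/2$, where $\til S(1/2)=\cl(\nu)D_{h_0}$) an operator with the same index as $D_M^+$. By Lemma \ref{finitemero} and Proposition \ref{propofS}, $\til S(\la)$ is invertible outside a discrete set, so for generic $\la$ the index is $0$; homotopy invariance of the index then gives $\Ind(D_M^+)=0$. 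This homotopy argument along $\la$ is the key mechanism you did not identify.

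In summary, your overall intuition that both statements should follow from the holomorphic family $\til S(\la)$ is aligned with the paper, but the actual proofs rely on the kernel-level polyhomogeneity of the resolvent on the $0$-blow-up (part 1) and on the invertibility of $\til S(\la)$ at generic $\la$ together with homotopy invariance (part 2), neither of which your proposal supplies.
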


As far as we know, the first part of the corollary is new. It is known since Wodzicki  \cite{Wod} that the \emph{global} residue trace 
of a pseudo-differential projector of order $0$ vanishes, however the local residue density does not vanish for general projectors 
(e.g.\ see \cite{Gil}). What is true is that the APS spectral projector has also vanishing local residue, a fact which is equivalent to the conformal invariance
of the eta invariant. For metrics of product type near the boundary the Calder\'on and APS projectors coincide up to smoothing operators; thus our result was known for such metrics.

The second statement is the well-known cobordism invariance of the index for the Dirac operator;  there exist several proofs
of this fact for more general Dirac type operators (see for instance \cite{AS, Moro,Lesch,Nico, Brav}) but we found it worthwhile to
point out that this fact can be obtained as a easy consequence of the invertibility of the scattering operator. In fact, a proof of cobordism invariance using scattering theory for cylindrical metrics has been found recently by M\"uller-Strohmaier \cite{MuSt}, however their approach does not seem to have implications about the Cald\'eron or Bergman projectors.

\subsection*{More general operators} Our approach does not seem to work for more general Dirac type operators. However 
it applies essentially without  modifications to twisted spin Dirac operators, with twisting bundle and connection smooth on $\bbar{X}$. For simplicity of notation, we restrict ourselves to the untwisted case.

\subsection*{Acknowledgements} 
This project was started while the first two authors were visiting KIAS Seoul,
it was continued while C.G. was visiting IAS Princeton, and finished while S.M. was visiting ENS Paris;
we thank these institutions for their support. We also thank Andrei Moroianu for checking (with an independent  method)
the formula for $L_1$ in Corollary \ref{corL1}. 
 C.G. was supported by the grant NSF-0635607 at IAS.
S.M. was supported by the grant PN-II-ID-PCE 1188 265/2009 and by a CNRS grant at ENS. 

\section{Dirac operator on asymptotically hyperbolic manifold}\label{AH}
We start by recalling the results of \cite{GMP} that we need for
our purpose. Let $(X,g)$ be an $(n+1)$-dimensional smooth complete
non-compact spin manifold which is the interior of a smooth compact manifold with boundary 
$\bbar{X}$. We shall say that it is \emph{asymptotically
hyperbolic} if the metric $g$ has the following properties: there
exists a smooth boundary defining function $x$ of $\pl\bbar{X}$
such that $x^2g$ is a smooth metric on $\bbar{X}$ and
$|dx|_{x^2g}=1$ at $\pl\bbar{X}$. It is shown in \cite{GRL,JSB}
that for such metrics, there is a diffeomorphism
$\psi:[0,\eps)_t\x \pl\bbar{X}\to U\subset\bbar{X}$ such that
\begin{equation}\label{psig}
\psi^*g=\frac{dt^2+h(t)}{t^2}
\end{equation}
where $\eps>0$ is small, $U$ is an open neighborhood of
$\pl\bbar{X}$ in $\bbar{X}$ and $h(t)$ is a smooth one-parameter
family of metrics on $\pl\bbar{X}$.  The function $\psi_*(t)$ will
be called \emph{geodesic boundary defining function} of
$\pl\bbar{X}$ and the metric $g$ will be said \emph{even to order
$2k+1$} if $\pl^{2j+1}_th(0)=0$ for all $j<k$; such a property
does not depend on $\psi$, as it is shown in \cite{GuiDMJ}. The
\emph{conformal infinity} of $\bbar{X}$ is the conformal class on
$\pl\bbar{X}$ given by
\[[h_0]:=\{(x^2g)|_{T\pl\bbar{X}}\ ; \ x\textrm{ is a boundary defining function of }\bbar{X}\}.\]
On $\bbar{X}$ there exists a natural smooth bundle ${^0T}\bbar{X}$
whose space of smooth sections is canonically identified with  the Lie
algebra $\mc{V}_0$ of smooth vector fields which vanish at the
boundary $\pl\bbar{X}$, its dual ${^0T}^*\bbar{X}$ is also a smooth
bundle over $\bbar{X}$ and $g$ is a smooth metric on
${^0T}\bbar{X}$.

Consider the ${\rm SO}(n+1)$-principal bundle ${^0_o}F(\bbar{X})\to
\bbar{X}$ over $\bbar{X}$ of orthonormal frames in ${^0T}\bbar{X}$
with respect to $g$. Since $\bbar{X}$ is spin, there is a ${\rm
Spin}(n+1)$-principal bundle ${^0_s}F(\bbar{X})\to \bbar{X}$ which
double covers ${^0_o}F(\bbar{X})$ and is compatible with it in the
usual sense. The $0$-Spinor bundle $^0\Sigma(\bbar{X})$ can then be
defined as a bundle associated to the ${\rm Spin}(n+1)$-principal
bundle ${^0_s}F(\bbar{X})$, with the fiber at $p\in\bbar{X}$
\[^0\Sigma_p(\bbar{X})=({^0_s}F_p\x S(n+1))/\tau\]
where $\tau:{\rm Spin}(n+1)\to {\rm Hom}(S(n+1))$ is the standard spin
representation on $S(n+1)\simeq \cc^{2^{[(n+1)/2]}}$. If $x$ is
any geodesic boundary defining function, the unit vector field
$x\pl_x:=\nabla^{g}\log(x)$ is a smooth section of ${^0T}\bbar{X}$. The Clifford multiplication ${\rm cl}(x\pl_x)$ restricts to
the boundary to a map denoted by ${\rm cl}(\nu)$, independent of the choice of $x$, 
satisfying ${\rm cl}(\nu)^2=-{\rm Id}$ which splits the
space of 0-spinors on the boundary into $\pm i$ eigenspaces
\begin{align*}{^0\Sigma_\pm}:=\ker ({\rm cl}(\nu)\mp i),&& {^0\Sigma}|_{M}={^0\Sigma}_+\oplus {^0\Sigma}_-\end{align*}

The Dirac operator $D_g$ associated to $g$ acts in
$L^2(X,{^0\Sigma})$ and is self-adjoint since the
metric $g$ is complete. Let us denote by
$\dot{C}^\infty(\bbar{X},{^0\Sigma})$ the set of smooth spinors on
$\bbar{X}$ which vanish to infinite order at $\pl\bbar{X}$. We
proved the following result in  \cite[Prop 3.2]{GMP}:
\begin{prop}\label{resolvent}
The spectrum of $D_g$ is absolutely continuous and given by the
whole real line $\sigma(D_g)=\rr$. Moreover the $L^2$ bounded
resolvent $R_\pm(\la):=(D_g\pm i\la)^{-1}$ extends from $\{\Re(\la)>0\}$
meromorphically in $\la\in\cc\setminus {-\nn/2}$ as a family of
operators mapping $\dot{C}^\infty(\bbar{X},{^0\Sigma})$ to
$x^{\ndemi+\la}C^\infty(\bbar{X},{^0\Sigma})$, and it is analytic
in $\{\Re(\la)\geq 0\}$. Finally, we have
$[x^{-\ndemi-\la}R_\pm(\la)\sigma]|_{\pl\bbar{X}}\in
C^{\infty}(\pl\bbar{X},{^0\Sigma}_\mp)$ for all
$\sigma\in\dot{C}^\infty(\bbar{X},{^0\Sigma})$.
\end{prop}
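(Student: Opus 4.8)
The plan is to combine the separation-of-variables analysis near the boundary with a gluing/parametrix argument, following the standard strategy for scattering theory on asymptotically hyperbolic spaces (Mazzeo--Melrose, Guillarmou) but adapted to the Dirac operator $D_g$. First I would establish the spectral statement: since $g$ is complete, $D_g$ is essentially self-adjoint on $\dot C^\infty(\bX,{}^0\Sigma)$, so $\sigma(D_g)\subset\rr$. To see $\sigma(D_g)=\rr$ with no point spectrum, I would use the model at infinity. In the collar $\psi^*g=(dt^2+h(t))/t^2$ one writes $D_g$ in the form $\cl(t\pl_t)(t\pl_t - \tfrac n2 + tD_{h(t)} + \text{l.o.t.})$ after conjugating by a suitable power of $t$; the indicial (model) operator is then essentially the Dirac operator on hyperbolic space $\hh^{n+1}$ twisted by the boundary Dirac operator $D_{h_0}$, whose spectrum is known to be all of $\rr$ and purely absolutely continuous. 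A Weyl-sequence argument localized near $\pl\bX$ then shows $\rr\subset\sigma(D_g)$, and the absence of $L^2$ eigenvalues follows from a unique-continuation / Rellich-type argument at infinity: any $L^2$ solution of $(D_g\pm i\la)\phi=0$ for $\la$ real must, by the indicial analysis, decay faster than any element of the continuous spectrum allows and hence vanish. Absolute continuity then follows from the limiting absorption principle, which is itself a consequence of the resolvent construction below.

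The core of the proposition is the meromorphic continuation of $R_\pm(\la)=(D_g\pm i\la)^{-1}$. Here I would construct a parametrix for $D_g\pm i\la$ with the correct mapping properties. The indicial roots of $D_g\pm i\la$ at $\pl\bX$ are $\tfrac n2\pm\la$ (one for each eigenspace ${}^0\Sigma_\pm$ of $\cl(\nu)$), which is why the target space is $x^{n/2+\la}C^\infty$; the forbidden set $-\nn/2$ comes exactly from the coincidences of these indicial roots modulo $\nn$, i.e. from $2\la\in-\nn$. I would build the parametrix on the relevant blown-up double space: the normal operator at the front face is inverted using the explicit resolvent of the Dirac operator on $\hh^{n+1}$ (which is known in closed form), giving a first approximation $R^0_\pm(\la)$ holomorphic in $\Re(\la)>-\tfrac12$ away from $-\nn/2$; then I would correct the error, which is a compact operator on the appropriate weighted space, by a Neumann series / analytic Fredholm argument. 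Analytic Fredholm theory yields meromorphy in $\la\in\cc\setminus(-\nn/2)$ with poles of finite rank, and the absence of poles in $\{\Re(\la)\geq 0\}$ follows from self-adjointness (no $L^2$ kernel for $\Re(\la)>0$ by the spectral theorem, and on $\Re(\la)=0$ by the Rellich argument above together with the limiting absorption principle). Finally, applying $R_\pm(\la)$ to $\sigma\in\dot C^\infty$ and reading off the leading term of the expansion at $\pl\bX$, the coefficient of $x^{n/2+\la}$ lies in the kernel of $(\cl(\nu)\mp i)$ because the indicial operator forces the leading coefficient into the appropriate eigenspace; this gives the boundary pairing statement $[x^{-n/2-\la}R_\pm(\la)\sigma]|_{\pl\bX}\in C^\infty(\pl\bX,{}^0\Sigma_\mp)$.

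Since this proposition is quoted verbatim from \cite[Prop.\ 3.2]{GMP}, the cleanest route is simply to cite that construction; what I have sketched is the shape of the argument there. The main obstacle, were one to redo it from scratch, is the precise parametrix construction on the $0$-blown-up double space together with the bookkeeping of the half-integer indicial roots of the Dirac operator (as opposed to the integer roots for the Laplacian), in particular verifying that the error term is genuinely compact on $x^{n/2+\la}C^\infty$ uniformly in $\la$ on compact subsets and that no spurious poles are introduced at $\Re(\la)=0$; this last point is where completeness of $g$ and the Rellich-type uniqueness theorem at infinity do the essential work. For our purposes we take the statement as given and use it in the form recalled here.
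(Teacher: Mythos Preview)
Your proposal is correct and matches the paper's approach: the paper does not prove this proposition at all but simply cites it as \cite[Prop.\ 3.2]{GMP}, exactly as you observe in your final paragraph. Your sketch of the underlying argument (indicial analysis, parametrix on the $0$-stretched double space via the explicit $\hh^{n+1}$ Dirac resolvent, analytic Fredholm theory, and the Rellich-type argument to exclude poles on $\Re(\la)\geq 0$) is an accurate outline of what is done in \cite{GMP}, and is more than the present paper requires.
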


Using this result, in \cite{GMP} we were able  to solve the
following boundary value problem
\begin{prop}\label{poisson}
Let $\la\in U:=\{z\in\cc;\Re(z)\geq 0, z\notin \nn/2\}$. For
all $\psi\in C^{\infty}(\pl\bbar{X},{^0\Sigma}_\pm)$ there is a
unique $\sigma_\pm(\la)\in C^{\infty}(X,{^0\Sigma})$ such that
there exist $\sigma_\pm^+(\la),\sigma_{\pm}^-(\la)\in
C^{\infty}(\bbar{X},{^0\Sigma})$ satisfying
$\sigma_\pm(\la)=x^{\ndemi-\la}\sigma_\pm^-(\la)+x^{\ndemi+\la}\sigma_\pm^+(\la)$
and 
\begin{align}\label{eq-sigma} 
(D_g\pm i\la)\sigma_\pm(\la)=0, && \sigma^-_\pm(\la)|_{\pl\bbar{X}}=\psi.
\end{align} 
Moreover
$\sigma_\pm^+(\la),\sigma_\pm^-(\la)$ are analytic in $\la\in U$
and one has $\sigma_\pm^+(\la)|_{\pl\bbar{X}}\in
C^\infty(\pl\bbar{X},{^0\Sigma}_\mp)$.
\end{prop}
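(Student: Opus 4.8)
The plan is to build $\sigma_\pm(\la)$ as a \emph{model solution} near $\pl\bbar X$ carrying the prescribed asymptotics, and then to correct it by the resolvent of Proposition~\ref{resolvent} so as to make it exact; uniqueness will come from an $L^2$/boundary-pairing argument. I treat the ``$+$'' case ($\psi\in C^\infty(M,{^0\Sigma}_+)$, operator $D_g+i\la$); the ``$-$'' case is identical with ${^0\Sigma}_+$ and ${^0\Sigma}_-$ interchanged. Working in the collar where $\psi^*g=x^{-2}(dx^2+h(x))$, conformal covariance of $D_g$ (or a direct computation, cf.\ \cite{GMP}) gives a normal form $D_g=\cl(\nu)\big(x\partial_x-\tfrac n2\big)+x\,B(x)$, with $B(x)$ a smooth one-parameter family of first order operators on $M$, smooth up to $x=0$. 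Hence the indicial family of $D_g+i\la$ at the exponent $s$ is the fibrewise endomorphism $I_s:=(s-\tfrac n2)\cl(\nu)+i\la$ of ${^0\Sigma}|_M$, invertible except at $s=\tfrac n2\mp\la$, where $\ker I_{\ndemi-\la}={^0\Sigma}_+$ and $\ker I_{\ndemi+\la}={^0\Sigma}_-$ (for $\la\neq0$); note $2\la\notin\nn$ throughout $U$.

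Next I would produce the model solution $\sigma^0(\la)=x^{\ndemi-\la}F(\la)$ with $F\in C^\infty(\bbar X,{^0\Sigma})$, $F|_M=\psi$ and $(D_g+i\la)\sigma^0\in\dot C^\infty(\bbar X,{^0\Sigma})$. Expanding $F\sim\sum_{j\ge0}x^j\phi_j$ with $\phi_0=\psi$, the coefficient of $x^{\ndemi-\la+j}$ in $(D_g+i\la)\sigma^0$ is $I_{\ndemi-\la+j}\phi_j$ plus a differential expression in $\phi_0,\dots,\phi_{j-1}$; it vanishes at $j=0$ because $\psi\in\ker I_{\ndemi-\la}$, and for $j\ge1$ the endomorphism $I_{\ndemi-\la+j}$ is invertible (it could fail only for $j\in\{0,2\la\}$, excluded since $2\la\notin\nn$). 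The recursion thus determines each $\phi_j$, $j\ge1$, uniquely and holomorphically in $\la\in U$, and an asymptotic (Borel) summation with $\la$-dependent cutoffs converging in $C^\infty$ locally uniformly in $\la$ yields such an $F(\la)$, holomorphic in $\la$, with $f(\la):=-(D_g+i\la)\sigma^0(\la)\in\dot C^\infty$ holomorphic in $\la$. Now set $\sigma_+(\la):=\sigma^0(\la)+R_+(\la)f(\la)$: by Proposition~\ref{resolvent}, $R_+(\la)$ is holomorphic on $U\subset\{\Re\la\ge0\}$, carries $\dot C^\infty$ into $x^{\ndemi+\la}C^\infty(\bbar X,{^0\Sigma})$, and still right-inverts $D_g+i\la$ there, so $(D_g+i\la)\sigma_+(\la)=0$. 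Since $2\la\notin\nn$, the exponent sets $\ndemi-\la+\nn$ and $\ndemi+\la+\nn$ are disjoint; hence $\sigma^-_+(\la):=x^{-\ndemi+\la}\sigma^0(\la)$ and $\sigma^+_+(\la):=x^{-\ndemi-\la}R_+(\la)f(\la)$ lie in $C^\infty(\bbar X,{^0\Sigma})$, satisfy $\sigma_+=x^{\ndemi-\la}\sigma^-_++x^{\ndemi+\la}\sigma^+_+$ and $\sigma^-_+|_M=\psi$, and $\sigma^+_+|_M\in C^\infty(M,{^0\Sigma}_-)$ by the last assertion of Proposition~\ref{resolvent}; holomorphy in $\la$ is inherited. (At $\la=0$ the two leading exponents coincide and one simply reads off $\sigma^\pm_+|_M$ from the $\pm i$-eigenspace splitting of $\cl(\nu)$ on $M$.)

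For uniqueness, let $\delta:=\sigma_1-\sigma_2=x^{\ndemi-\la}a+x^{\ndemi+\la}b$ solve $(D_g+i\la)\delta=0$ with $a,b\in C^\infty(\bbar X,{^0\Sigma})$, $a|_M=0$ and $b|_M\in{^0\Sigma}_-$. Running the indicial recursion on the $x^{\ndemi-\la}$-tower of $\delta$ — whose leading coefficient now sits at some $j\ge1$, where $I_{\ndemi-\la+j}$ is invertible — forces $a$ to vanish to infinite order at $M$, so $\delta=x^{\ndemi+\la}b$ modulo a term in $\dot C^\infty$. If $\Re\la>0$, then $x^{\ndemi+\la}b\in L^2(X,d\mathrm{vol}_g)$, hence $\delta\in L^2$; since $D_g\delta=-i\la\delta$ and $-i\la\notin\rr=\sigma(D_g)$, this gives $\delta=0$. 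If $\Re\la=0$, write $\la=is$ and integrate by parts over $X_\eps=\{x>\eps\}$: the bulk term $\int_{X_\eps}\big(\langle D_g\delta,\delta\rangle-\langle\delta,D_g\delta\rangle\big)\,d\mathrm{vol}_g=-2i\,\Re(\la)\int_{X_\eps}|\delta|^2\,d\mathrm{vol}_g$ vanishes, so the boundary integral $\int_{\{x=\eps\}}\langle\cl(N_\eps)\delta,\delta\rangle\,d\mathrm{vol}_\eps$ (with $N_\eps$ the $g$-unit normal to $\{x=\eps\}$) vanishes for every $\eps>0$; since $d\mathrm{vol}_\eps\sim\eps^{-n}\,d\mathrm{vol}_{h(0)}$, $\cl(N_\eps)\to\cl(\nu)$, $\delta|_{x=\eps}=\eps^{\ndemi+\la}b|_M+O(\eps^{\ndemi+1})$ and $\cl(\nu)b|_M=-i\,b|_M$, this integral tends as $\eps\to0$ to $-i\|b|_{M}\|_{L^2(M)}^2$, forcing $b|_M=0$. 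Then $x^{\ndemi+\la}b\in L^2(X)$ as well, so $\delta\in L^2(X)$, and since $\sigma(D_g)$ is purely absolutely continuous (Proposition~\ref{resolvent}) we get $\delta=0$.

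The step I expect to be the main obstacle is precisely this uniqueness statement on the critical line $\Re\la=0$: off that line $D_g+i\la$ is invertible on $L^2$ and uniqueness is immediate, but on $\Re\la=0$ one genuinely needs the Rellich-type boundary-pairing identity above — the same self-adjointness mechanism that makes the normalized scattering operators $\til S(\la)$ unitary at real energies. A secondary, but essential, point is the holomorphic dependence on $\la$ of the Borel-summed model solution together with the bookkeeping ensuring that the $x^{\ndemi-\la}$- and $x^{\ndemi+\la}$-asymptotics of $\sigma_+$ never resonate, which is exactly where the hypothesis $\la\notin\nn/2$ is used.
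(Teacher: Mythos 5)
Your construction — model solution $\sigma^0(\la)=x^{\ndemi-\la}F(\la)$ produced by the indicial recursion and Borel summation, then corrected by $\sigma_+(\la)=\sigma^0(\la)+R_+(\la)f(\la)$ with $f=-(D_g+i\la)\sigma^0$ — is exactly the construction the paper invokes (see \eqref{constsigma}--\eqref{sigmainfty}, citing Lemma 4.4 of \cite{GMP}), and your uniqueness argument via the indicial tower, $L^2$-spectral vanishing off the critical line, and the Green/boundary-pairing identity on $\Re\la=0$ matches the standard argument underlying that lemma. So the proposal is correct and follows essentially the same route as the paper.
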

The solution $\sigma_{\pm}(\la)$ of Proposition
\ref{poisson} is constructed in Lemma 4.4 of \cite{GMP} as a sum 
\begin{equation}\label{constsigma}
\sigma_\pm(\la)=\sigma_{\infty,\pm}(\la)-R_\pm(\la)(D_g\pm i\la)\sigma_{\infty,\pm}(\la)
\end{equation}
where $\sigma_{\infty,\pm}(\la)\in
x^{\ndemi-\la}C^{\infty}(\bbar{X},{^0\Sigma})$ satisfies
\begin{align}\label{sigmainfty}
(D_g\pm i\la)\sigma_{\infty,\pm}(\la)\in
\dot{C}^\infty(\bbar{X},{^0\Sigma}), &&
[x^{-\ndemi+\la}\sigma_{\infty,\pm}(\la)]|_{\pl\bbar{X}}=\psi\end{align}
with the additional property that it is analytic in
$\{\Re(\la)\geq 0, \la\notin \nn/2\}$. Since $R_\pm(\la)$ are
analytic in $\{\Re(\la)\geq 0\}$, this shows that $\sigma_\pm(\la)$ is
analytic in the same domain, and we have $D_g\sigma_{\pm}(0)=0$.
Since this will be useful below, we recall briefly the
construction  of the approximate solution
$\sigma_{\infty,\pm}(\la)$ near the boundary from Lemma 4.4 in
\cite{GMP}. The principle is to write the Dirac operator near
$\pl\bbar{X}$ in the product decomposition $[0,\eps)_x\x\pl\bbar{X}$
\begin{equation}\label{diracAH}
D_g=x^{\ndemi}({\rm cl}(x\pl_x)x\pl_x+xD_{h_0})x^{-\ndemi}+ xP
\end{equation}
where $D_{h_0}$ is the Dirac operator on the boundary for the metric $h_0$ and $P$ is
a first order differential operator with smooth coefficients which in local
coordinates $(x,y)$ near the boundary can be written
\[P=P_0(x,y)x\pl_x +\sum_{j=1}^nP_j(x,y)x\pl_{y_i}\]
for some smooth sections $P_j$ of ${^0\Sigma}\otimes
{^0\Sigma}^*$. Consequently, one has for any $\psi_\pm\in
C^{\infty}(\pl\bbar{X},{^0\Sigma}_{\pm})$ and $k\in\nn_0$ the
indicial equation
\begin{align}\label{indicialeq}
(D_g\pm i\la)&x^{\ndemi-\la+k}(\psi_++\psi_-)\\
=&ix^{\ndemi-\la+k}\Big((k-\la\pm\la)\psi_++(\la-k\pm\la)\psi_-\Big)+
x^{\ndemi-\la+k+1}F_\la^k\notag
\end{align} 
where $F^k_\la\in C^{\infty}(\bbar{X},{^0\Sigma})$ is holomorphic
near $\la=0$. From this, using formal series and Borel lemma, it
is easy to see that one can construct near $\la=0$ a spinor
$\sigma_{\infty,\pm}(\la)\in
x^{\ndemi-\la}C^{\infty}(\bbar{X},{^0\Sigma})$, holomorphic near
$\la=0$, solving \eqref{sigmainfty} whose formal Taylor series is
determined locally and uniquely by $\psi_\pm$.

Let $\sigma_{\pm}(\la)$ be the spinor of Proposition
\ref{poisson} (thus depending on $\psi$), we can then define
linear Poisson operators and scattering operators
\begin{align*}
E_\pm(\la):C^{\infty}(\pl\bbar{X},{^0\Sigma}_\pm)\to &C^{\infty}(X,{^0\Sigma}), & \psi  \mapsto&  \sigma_{\pm}(\la), \\
\ \ \, S_\pm (\la):
C^{\infty}(\pl\bbar{X},{^0\Sigma}_\pm)\to  &C^{\infty}(\pl\bbar{X},{^0\Sigma}_\mp), &
\psi  \mapsto & \sigma^+_{\pm}(\la)|_{\pl\bbar{X}}
\end{align*}
which are holomorphic in $\{\Re(\la)\geq 0, \la\notin \nn/2\}$. We extend the definition of $E_\pm(\la)$
to the whole bundle ${^0\Sigma}$ by setting that it acts by $0$ on ${^0\Sigma}_\mp$.
Then from Proposition 4.6 of \cite{GMP}, the Schwartz kernel $E_\pm(\la;m,y')\in C^{\infty}(X\x \pl\bbar{X};
{^0\Sigma}\otimes{^0\Sigma}^*)$ of $E_\pm(\la)$
is given by
\begin{equation}\label{kernelE}
E_\pm(\la;m,y')=[R_\pm(\la;m,x',y'){x'}^{-\ndemi-\la}]|_{x'=0}{\rm cl}(\nu)
\end{equation}
where $R_\pm(\la;m,m')$ is the Schwartz kernel of $R_\pm(\la)$.
We can also define
\begin{align}\label{defelasla}
\, E(\la):C^{\infty}(\pl\bbar{X},{^0\Sigma})\to &C^{\infty}(X,{^0\Sigma}),&
\psi_++\psi_-\mapsto & E_+(\la)\psi_++E_-(\la)\psi_-, \\
S(\la):C^{\infty}(\pl\bbar{X},{^0\Sigma})\to &C^{\infty}(\pl\bbar{X},{^0\Sigma})\,&
\psi_++\psi_-\mapsto & S_+(\la)\psi_++S_-(\la)\psi_- .\nonumber
\end{align}
The main features of $S(\la)$, also proved in Section 4.3 of
\cite{GMP}, are gathered in
\begin{prop}\label{propofS}
For $\Re(\la)\geq 0$ and $\la\notin \nn/2$, the operator $S(\la)$ depends 
on the choice of the boundary defining function $x$ but changes under the law
\begin{align}\label{change}
\hat{S}(\la)=e^{-(\ndemi+\la)\omega_0}S(\la)e^{(\ndemi-\la)\omega_0}, && \omega_0:=\omega|_{x=0}
\end{align}
if $\hat{S}(\la)$ is the scattering operator defined using the boundary defining function $\hat{x}=e^{\omega}x$
for some $\omega\in C^{\infty}(\bbar{X})$.
Moreover $S(\la)\in \Psi^{2\la}(\pl\bbar{X},{^0\Sigma})$ is a classical pseudodifferential operator of order $2\la$, and its principal symbol is given by
\[\sigma_{\rm pr}(S(\la))(\xi)=i2^{-2\la}\frac{\Gamma(1/2-\la)}{\Gamma(1/2+\la)}{\rm cl }(\nu){\rm cl}(\xi)|\xi|^{2\la-1}_{h_0}\]
where $h_0=(x^2g)|_{T\pl\bbar{X}}$. If $\la\in i\rr$, $S(\la)$ extends as a
unitary operator on $L^2(\pl\bbar{X},{^0\Sigma})$,
its inverse is given by $S(-\la)$ and extends meromorphically in $\{\Re(\la)\geq 0,\la\notin \nn/2\}$
as a family of classical pseudo-differential operators in $\Psi^{-2\la}(\pl\bbar{X},{^0\Sigma})$. Finally $S(\la)$
is self-adjoint for $\la\in (0,\infty)$.
\end{prop}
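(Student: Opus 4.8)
I would prove the four assertions of Proposition~\ref{propofS} in turn, using only Proposition~\ref{poisson}, Proposition~\ref{resolvent} and the representations \eqref{constsigma}, \eqref{kernelE}. The \emph{transformation law} \eqref{change} is essentially bookkeeping of boundary exponents together with the uniqueness in Proposition~\ref{poisson}: writing two boundary defining functions as $x$ and $\hat x=e^{\omega}x$ with $\omega\in C^\infty(\bbar X)$, one first notes that $\nabla^g\log\hat x-\nabla^g\log x=x^2\nabla^{\bar g}\omega$ vanishes at $\pl\bbar X$ as a section of ${^0T}\bbar X$, so the Clifford element $\cl(\nu)$ and the splitting ${^0\Sigma}|_{\pl\bbar X}={^0\Sigma}_+\oplus{^0\Sigma}_-$ do not change. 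Substituting $x^{\ndemi\mp\la}=e^{-(\ndemi\mp\la)\omega}\hat x^{\ndemi\mp\la}$ into $\sigma_\pm(\la)=x^{\ndemi-\la}\sigma_\pm^-(\la)+x^{\ndemi+\la}\sigma_\pm^+(\la)$ shows that the same spinor $\sigma_\pm(\la)$ has $\hat x$-leading coefficient $e^{-(\ndemi-\la)\omega_0}\sigma_\pm^-(\la)|_{\pl\bbar X}$ and $\hat x$-subleading coefficient $e^{-(\ndemi+\la)\omega_0}\sigma_\pm^+(\la)|_{\pl\bbar X}$; hence $\hat S(\la)\big(e^{-(\ndemi-\la)\omega_0}\psi\big)=e^{-(\ndemi+\la)\omega_0}S(\la)\psi$, which rearranges to \eqref{change} and in particular exhibits the dependence of $S(\la)$ on $x$.

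\emph{Pseudodifferential structure and principal symbol.} Here the idea is to feed the structure of $R_\pm(\la)$ into \eqref{kernelE}. By \cite{GMP} (following Mazzeo--Melrose \cite{MM}) the Schwartz kernel of $R_\pm(\la)$ belongs to the large $0$-calculus: lifted to the blow-up $\bbar X\x_0\bbar X$ of $\bbar X\x\bbar X$ along the diagonal of the corner $\pl\bbar X\x\pl\bbar X$, it is conormal to the lifted diagonal and polyhomogeneous at the two side faces, with leading power ${x'}^{\ndemi+\la}$ at the right face — precisely what makes the restriction \eqref{kernelE} meaningful. That restriction identifies the Poisson kernel $E_\pm(\la)$ on $\bbar X\x_0\pl\bbar X$, and a further restriction $\big[x^{-\ndemi-\la}E_\pm(\la;m,y')\big]|_{x=0}$, composed with $\cl(\nu)$, yields the kernel of $S_\pm(\la)$; it is conormal to the diagonal of $\pl\bbar X$ with the symbolic behaviour of a classical operator of order $2\la$, so $S(\la)\in\Psi^{2\la}(\pl\bbar X,{^0\Sigma})$, holomorphic in $\{\Re\la\geq0\}\setminus(\nn/2)$ — the exceptional set being dictated by the indicial roots $\ndemi\pm\la$ of \eqref{indicialeq} differing by a positive integer, where the formal construction preceding the proposition degenerates. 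The principal symbol is read off from the front face, over which $g$ is exactly hyperbolic $\hh^{n+1}$ and $D_g\pm i\la$ becomes the (translation invariant along the flat boundary $\rr^n$) Dirac operator of $\hh^{n+1}$; its resolvent kernel is explicit in terms of hypergeometric/Bessel functions, and the boundary-to-boundary restriction acts on the Fourier side by multiplication by $i\,2^{-2\la}\tfrac{\Gamma(1/2-\la)}{\Gamma(1/2+\la)}\cl(\nu)\cl(\xi)|\xi|^{2\la-1}_{h_0}$. Since the principal symbol of an operator in the calculus equals that of its front-face model, this is $\sigma_{\rm pr}(S(\la))$; it is invertible, so $S(\la)$ is elliptic.

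\emph{Functional equations, self-adjointness, unitarity, inverse.} Two identities close the argument. First, $S(-\la)S(\la)=S(\la)S(-\la)={\rm Id}$: reading $\sigma_\pm(\la)[\psi]$ relative to the exponent $x^{\ndemi+\la}$ exhibits it as the solution of \eqref{eq-sigma} at spectral parameter $-\la$ with incoming data $S(\la)\psi$ (the labels $\pm$ interchanging, since $\cl(\nu)$ preserves ${^0\Sigma}_\pm$ while scattering sends ${^0\Sigma}_\pm$ to ${^0\Sigma}_\mp$), so uniqueness and meromorphic continuation give $S(-\la)S(\la)\psi=\psi$, and symmetrically. Second, $(D_g\pm i\la)^*=D_g\mp i\bar\la$ gives $R_\pm(\la)^*=R_\mp(\bar\la)$, which through \eqref{kernelE} yields the adjoint relation $S(\la)^*=S(\bar\la)$. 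For $\la\in(0,\infty)$ this reads $S(\la)^*=S(\la)$, i.e.\ self-adjointness; for $\la\in i\rr$ it reads $S(\la)^*=S(-\la)$, and combined with the first identity gives $S(\la)^*S(\la)=S(\la)S(\la)^*={\rm Id}$, i.e.\ unitarity on $L^2(\pl\bbar X,{^0\Sigma})$. (The unitarity also follows directly from Green's formula on the truncation $\{x>\eps\}$ applied to $\sigma_\pm(\la)[\psi_1]$ and $\sigma_\pm(\la)[\psi_2]$: for $\la\in i\rr$ these are generalized eigenspinors of $D_g$ with real eigenvalue, so the interior term vanishes, and as $\eps\to0$ the mixed boundary terms drop by orthogonality of ${^0\Sigma}_+$ and ${^0\Sigma}_-$, leaving $\|S(\la)\psi\|_{L^2}=\|\psi\|_{L^2}$.) Finally, for $\Re\la>0$ the elliptic classical operator $S(\la)$ of order $2\la$ is Fredholm on the relevant Sobolev spaces, hence invertible off a discrete set of $\la$ by analytic Fredholm theory, with inverse a classical operator of order $-2\la$; the functional equation identifies this inverse with $S(-\la)$, understood as its meromorphic continuation from $i\rr$.

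\emph{Main obstacle.} The genuinely technical step is the second one: establishing that $R_\pm(\la)$ lies in the large $0$-calculus with the stated leading behaviour at the side faces — so that the successive restrictions producing $E_\pm(\la)$ and $S_\pm(\la)$ are well defined and land in the expected calculi — and pinning down the constant $2^{-2\la}\Gamma(1/2-\la)/\Gamma(1/2+\la)$ from the explicit hyperbolic model on the front face. The other assertions reduce to uniqueness in Proposition~\ref{poisson}, the resolvent symmetry $R_\pm(\la)^*=R_\mp(\bar\la)$, and (for unitarity) a boundary-term computation.
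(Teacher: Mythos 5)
Your proposal is correct and takes essentially the same approach the paper indicates: the paper's own proof consists only of the remark that the conformal change law and invertibility follow from the uniqueness in Proposition~\ref{poisson}, with the pseudodifferential structure, meromorphy and principal symbol deferred to Section~4.3 of \cite{GMP}; your reconstruction of the transformation law via rewriting the $x$-powers in terms of $\hat x$, of the functional equation $S(-\la)S(\la)={\rm Id}$ by reading the expansion against $x^{\ndemi+\la}$, of $S(\la)^*=S(\bar\la)$ from $R_\pm(\la)^*=R_\mp(\bar\la)$, and of the symbol from the front-face (exact hyperbolic) model all follow that same path, and you correctly isolate the genuinely technical ingredient as the $0$-calculus structure of $R_\pm(\la)$ coming from \cite{GMP} and \cite{MM}.
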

The conformal change law and the invertibility are easy
consequences of the definition of $S(\la)$ and the uniqueness of
the solution $\sigma_{\pm}(\la)$ in Proposition \ref{poisson}, the
pseudodifferential properties and the meromorphic extension are
more delicate and studied in Section 4.3 of \cite{GMP}. In
particular, by letting $\la\to 0$ in \eqref{eq-sigma}, we deduce
easily the following
\begin{prop}\label{p:harmonic}
Let $\psi\in C^\infty(\pl\bbar{X},{^0\Sigma})$, then
$\sigma:=E(0)\psi$ is a harmonic spinor for $D$, which lives in
$x^\ndemi C^\infty(\bbar{X},{^0\Sigma})$ and has the following
behavior at the boundary
\[\sigma= x^{\ndemi}({\rm Id}+S(0))\psi +O(x^{\ndemi+1}).\]
\end{prop}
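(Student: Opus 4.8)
The plan is to obtain $\sigma = E(0)\psi$ as the value at $\la=0$ of the holomorphic family $\sigma_\pm(\la)$ from Proposition \ref{poisson}, and then to read the boundary asymptotics straight off the decomposition $\sigma_\pm(\la) = x^{\ndemi-\la}\sigma^-_\pm(\la) + x^{\ndemi+\la}\sigma^+_\pm(\la)$. By linearity we may assume $\psi\in C^\infty(\pl\bbar{X},{^0\Sigma}_\pm)$, so that $\sigma = \sigma_\pm(0)$ with $\sigma_\pm(\la)=E_\pm(\la)\psi$. Recall from \eqref{constsigma} that $\sigma_\pm(\la) = \sigma_{\infty,\pm}(\la) - R_\pm(\la)(D_g\pm i\la)\sigma_{\infty,\pm}(\la)$, where $\sigma_{\infty,\pm}(\la)\in x^{\ndemi-\la}C^\infty(\bbar{X},{^0\Sigma})$ is holomorphic near $\la=0$ and $(D_g\pm i\la)\sigma_{\infty,\pm}(\la)\in\dot{C}^\infty(\bbar{X},{^0\Sigma})$. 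Since $0\notin\nn/2$, Proposition \ref{resolvent} shows $R_\pm(\la)$ is analytic at $\la=0$ and maps $\dot{C}^\infty(\bbar{X},{^0\Sigma})$ into $x^{\ndemi+\la}C^\infty(\bbar{X},{^0\Sigma})$; hence $\sigma^-_\pm(\la):=x^{-\ndemi+\la}\sigma_{\infty,\pm}(\la)$ and $\sigma^+_\pm(\la):=-x^{-\ndemi-\la}R_\pm(\la)(D_g\pm i\la)\sigma_{\infty,\pm}(\la)$ both lie in $C^\infty(\bbar{X},{^0\Sigma})$ and depend holomorphically on $\la$ near $0$.

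With this in hand the argument is short. Setting $\la=0$ in \eqref{eq-sigma}, holomorphy gives $D_g\sigma_\pm(0)=0$, so $\sigma=E(0)\psi$ is a harmonic spinor for $D_g$. Evaluating the decomposition at $\la=0$, where the two exponents $x^{\ndemi\mp\la}$ coincide, yields
\[
\sigma = x^{\ndemi}\bigl(\sigma^-_\pm(0)+\sigma^+_\pm(0)\bigr),\qquad \sigma^-_\pm(0),\,\sigma^+_\pm(0)\in C^\infty(\bbar{X},{^0\Sigma}),
\]
which proves $\sigma\in x^{\ndemi}C^\infty(\bbar{X},{^0\Sigma})$. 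Passing to the limit $\la\to0$ in the two identities $\sigma^-_\pm(\la)|_{\pl\bbar{X}}=\psi$ (from \eqref{eq-sigma}) and $\sigma^+_\pm(\la)|_{\pl\bbar{X}}=S_\pm(\la)\psi$ (the definition of the scattering operator, which is holomorphic near $\la=0$ by Proposition \ref{propofS} since $0\notin\nn/2$) gives $\sigma^-_\pm(0)|_{\pl\bbar{X}}=\psi$ and $\sigma^+_\pm(0)|_{\pl\bbar{X}}=S(0)\psi$. A Taylor expansion of $\sigma^-_\pm(0)+\sigma^+_\pm(0)$ at $x=0$ then produces $\sigma=x^{\ndemi}({\rm Id}+S(0))\psi+O(x^{\ndemi+1})$, as claimed.

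The only delicate point, and the one I expect to be the main obstacle, is the confluence of the two indicial roots $\ndemi\pm\la$ at $\la=0$: a priori this could force a term $x^{\ndemi}\log x$ into the expansion of $\sigma$, just as occurs for the scalar Laplacian at the threshold parameter $s=n/2$. Such a term would be generated precisely by a pole of $R_\pm(\la)$ — equivalently of $\sigma^-_\pm(\la)$ or $\sigma^+_\pm(\la)$ — at $\la=0$; since Proposition \ref{resolvent} asserts that $R_\pm(\la)$ is genuinely analytic on $\{\Re(\la)\geq0\}$, no such pole is present and the limit $\la\to0$ is regular. Thus the substance of the statement is already contained in the analyticity assertion of Proposition \ref{resolvent}, and everything else is bookkeeping.
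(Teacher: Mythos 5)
Your proof is correct and takes essentially the same approach as the paper, which only sketches it with the phrase ``by letting $\la\to 0$ in \eqref{eq-sigma}.'' You also correctly isolate the one potentially delicate point — the coalescing indicial roots at $\la=0$ might generate an $x^{\ndemi}\log x$ term — and resolve it by invoking the analyticity of $R_\pm(\la)$ and of the splitting $\sigma_\pm^\pm(\la)$ on $\{\Re(\la)\geq 0\}$ from Propositions~\ref{resolvent} and~\ref{poisson}.
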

Remark from Proposition \ref{propofS} that $S(0)^*=S(0)^{-1}=S(0)$
and so the operator
\begin{equation}\label{defc}
\mc{C}:=\demi({\rm Id}+S(0))
\end{equation}
is an orthogonal projector on a subspace of
$L^2(\pl\bbar{X},{^0\Sigma})$ for the measure ${\rm dv}_{h_0}$
where $h_0=(x^{2}g)|_{T\pl\bbar{X}}$. Notice from \eqref{change}
that, under a change of boundary defining function
$\hat{x}=e^{\omega}x$, the operator $\mc{C}$ changes according to
conjugation $\hat{\mc{C}}=e^{-\ndemi\omega_0}\mc{C}
e^{\ndemi\omega_0}$.

Now we want to prove that the range of $E(0)$ acting on
$C^{\infty}(\pl\bbar{X},{^0\Sigma})$ is exactly the set of
 harmonic spinors in
$x^{\ndemi}C^{\infty}(\bbar{X},{^0\Sigma})$.
\begin{prop}\label{span}
Let $\phi\in x^{\ndemi}C^{\infty}(\bbar{X},{^0\Sigma})$ such that
$D_g\phi=0$ and let $\psi:=(x^{-\frac n2}\phi)|_{\pl\bbar{X}}$. Then
we have $E(0)\psi=2\phi$.
\end{prop}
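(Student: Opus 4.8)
The plan is to show that $\phi$ and $2E(0)\psi$ are two harmonic spinors in $x^{\ndemi}C^\infty(\bbar X,{}^0\Sigma)$ which have the same leading boundary term, and then invoke a uniqueness/vanishing argument to conclude they coincide. Concretely, write $\phi = x^{\ndemi}\phi_0 + O(x^{\ndemi+1})$ with $\phi_0|_{\pl\bbar X}=\psi$, and decompose $\psi = \psi_+ + \psi_-$ according to the splitting ${}^0\Sigma|_M = {}^0\Sigma_+\oplus{}^0\Sigma_-$. By Proposition~\ref{p:harmonic}, $\sigma:=E(0)\psi$ satisfies $D_g\sigma=0$, $\sigma\in x^{\ndemi}C^\infty$, and $\sigma = x^{\ndemi}({\rm Id}+S(0))\psi + O(x^{\ndemi+1})$. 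So the difference $\eta := 2\phi - 2\sigma$ is a harmonic spinor in $x^{\ndemi}C^\infty(\bbar X,{}^0\Sigma)$, and its leading coefficient at the boundary is $2\psi - 2({\rm Id}+S(0))\psi = -2S(0)\psi$, which lies purely in the "plus-type" component in the sense that $S(0)$ maps ${}^0\Sigma_\pm$ to ${}^0\Sigma_\mp$ — so in fact $2\phi$ already has its leading term $\psi$ equal to the "$\sigma^-$-part" contribution.

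The cleaner way to organize this is via the uniqueness in Proposition~\ref{poisson} at $\la=0$. First I would argue that any $\phi\in x^{\ndemi}C^\infty(\bbar X,{}^0\Sigma)$ with $D_g\phi=0$ automatically admits the refined expansion $\phi = x^{\ndemi-\la}\phi^-(\la) + x^{\ndemi+\la}\phi^+(\la)$ structure at $\la=0$; more precisely, write $\phi = \sum_{k\geq 0} x^{\ndemi+k}\phi_k$ with $\phi_k\in C^\infty(\pl\bbar X,{}^0\Sigma)$ as a formal expansion (polyhomogeneity at $\la=0$ follows since $0$ is not a pole of the resolvent and the indicial roots at $\la=0$ collide, so one must check no $\log$ terms appear — this uses that $D_g\phi=0$ forces, via the indicial equation \eqref{indicialeq} with $\la=0$, that $\phi_0\in C^\infty(\pl\bbar X,{}^0\Sigma_+\oplus{}^0\Sigma_-)$ is unconstrained but the next-order obstruction is controlled). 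Then $\psi := (x^{-\ndemi}\phi)|_{\pl\bbar X} = \phi_0$. Now apply the uniqueness part of Proposition~\ref{poisson}: given $\psi = \psi_+ + \psi_-$, the spinor $\sigma_+(0)$ with leading $\sigma_+^-(0)|_{\pl\bbar X}=\psi_+$ is unique, and similarly $\sigma_-(0)$; and $E(0)\psi = \sigma_+(0)+\sigma_-(0)$. The point is that $\demi\phi$ is \emph{also} a harmonic spinor with the same data $\psi$ under this boundary-value formulation — but one must be careful: Proposition~\ref{poisson} prescribes only the $\sigma^-$-part of the boundary data, i.e.\ the ${}^0\Sigma_\pm$-component appropriate to each summand, which at $\la=0$ is exactly $\psi$. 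Hence by uniqueness $\demi\phi = \sigma_+(0)+\sigma_-(0) = E(0)\psi$, giving $E(0)\psi = \demi\phi$... so the claimed identity $E(0)\psi = 2\phi$ actually requires tracking a factor carefully: the factor $2$ comes precisely from the fact that the leading boundary coefficient of $\sigma:=E(0)\psi$ in Proposition~\ref{p:harmonic} is $({\rm Id}+S(0))\psi$, not $\psi$, while for $\phi$ the prescribed data appears with coefficient $1$ in only the $\sigma^-$-slot. Reconciling these two normalizations is what produces $E(0)\psi = 2\phi$ rather than $\phi$.

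So the key steps, in order, are: (1) establish the polyhomogeneous boundary expansion $\phi = x^{\ndemi}\phi_0 + O(x^{\ndemi+1})$ with no $\log$ terms, using that the resolvent is analytic at $\la=0$ and analyzing \eqref{indicialeq}; (2) identify $\phi_0 = \psi$ and split $\psi = \psi_++\psi_-$; (3) show $\phi$ fits the framework of Proposition~\ref{poisson} at $\la=0$ for each $\pm$ component, so that $\demi\phi = \sigma_\pm(0)$ on the respective pieces by the uniqueness statement there — but accounting correctly for the doubling of the indicial root at $\la=0$, which merges $x^{\ndemi-\la}$ and $x^{\ndemi+\la}$ and is the source of the factor $2$; (4) conclude $E(0)\psi = \sigma_+(0)+\sigma_-(0) = 2\phi$ using the definition \eqref{defelasla} of $E(0)$.

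The main obstacle I expect is step~(3), namely the careful bookkeeping at the degenerate point $\la=0$. Away from $\la=0$ the two indicial roots $\ndemi\pm\la$ are distinct and the boundary data $\sigma^-_\pm(\la)|_{\pl\bbar X}=\psi$ uniquely pins down the solution; at $\la=0$ these roots coincide, the would-be "two-term" expansion $x^{\ndemi-\la}\sigma^- + x^{\ndemi+\la}\sigma^+$ degenerates, and one has to take the limit $\la\to 0$ of $\sigma_\pm(\la)$ (which is legitimate by the analyticity in $\{\Re\la\geq 0\}$ from Proposition~\ref{resolvent}) and compare leading coefficients. The factor of $2$ is genuinely an artifact of this confluence: in the limit, both $\sigma^-$ and $\sigma^+$ contribute to the single leading coefficient $x^{\ndemi}$, and since $\sigma^+(0)|_{\pl\bbar X} = S(0)\psi$ while $\sigma^-(0)|_{\pl\bbar X}=\psi$, the total leading term is $({\rm Id}+S(0))\psi$ — whereas $\phi$, being arbitrary harmonic, has leading term $\psi$ with $S(0)\psi$ appearing as the \emph{derived} quantity $(x^{-\ndemi}\phi)|$ restricted to the complementary ${}^0\Sigma$-component. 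Making this comparison rigorous, and checking that no subleading $\log x$ obstructions spoil the argument, is the crux.
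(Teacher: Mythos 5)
Your proposal does not actually complete the proof; you outline a uniqueness-based strategy, arrive at the wrong constant, explicitly acknowledge the discrepancy, and then leave the reconciliation as ``the crux'' without supplying the missing argument. In fact your intermediate claim ``by uniqueness $\demi\phi = \sigma_+(0)+\sigma_-(0) = E(0)\psi$'' is false, and the heuristic behind it (that matching the $\sigma^-$-slot boundary data uniquely pins down the solution at $\la=0$) is exactly what fails there: at $\la=0$ the split into $x^{\ndemi-\la}\sigma^-$ and $x^{\ndemi+\la}\sigma^+$ has degenerated, so Proposition~\ref{poisson}'s uniqueness statement cannot be applied by simply setting $\la=0$ and comparing leading coefficients.

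What the paper does, and what your proposal is missing, is an explicit resolvent/Green's-formula computation that pins down the constant. Writing $\psi=\psi_++\psi_-$, the paper builds the approximate solution $\phi_+(\la):=\sigma_{\infty,+}(\la)$ associated to $\psi_+$ \emph{alone}, sets $\phi_-(\la):=\phi-\phi_+(\la)$, and then uses the construction formula \eqref{constsigma} at $\la=0$:
\[
E_+(0)\psi_+=\phi_+(0)-R_+(0)D_g\phi_+(0)=\phi_+(0)+R_+(0)D_g\phi_-(0).
\]
The key step, which your argument has no counterpart for, is the identity $R_+(0)D_g\phi_-(0)=\phi_-(0)$, obtained by Green's formula on $\{x\ge\eps\}$, letting $\eps\to 0$, and using the boundary behavior of the resolvent kernel from \eqref{kernelE} together with $(x^{-\ndemi}\phi_-(0))|_{x=0}=\psi_-\in{}^0\Sigma_-$ (so the boundary term involving $E_+(0)\psi_-$ vanishes, $E_+$ being zero on ${}^0\Sigma_-$). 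This yields $E_+(0)\psi_+=\phi_+(0)+\phi_-(0)=\phi$, the \emph{full} spinor rather than half of it; by the symmetric argument $E_-(0)\psi_-=\phi$ as well, and adding gives $E(0)\psi=2\phi$. So the factor $2$ is not an ``artifact of confluence of indicial roots'' as you suggest; it is the concrete fact that each of $E_+(0)\psi_+$ and $E_-(0)\psi_-$ individually reproduces all of $\phi$, which requires the Green's-formula step you have omitted. Without that step your approach cannot recover the correct constant, and indeed as written it produces $\demi\phi$.

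Two smaller points: your worry about possible $\log x$ terms is moot, since the hypothesis already places $\phi\in x^{\ndemi}C^\infty(\bbar X,{}^0\Sigma)$; and the decomposition of $\phi$ into $\phi_+(\la)+\phi_-(\la)$ that makes the paper's argument work is not the eigenspace decomposition of the leading coefficient but a global splitting of the spinor $\phi$ itself built from the formal solution operator, a construction your outline does not contemplate.
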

\begin{proof} First let us write $\psi=\psi_++\psi_-$ with $\psi_\pm\in{^0\Sigma}_\pm$. Then
we construct the approximate solution $\sigma_{\infty,+}(\la)$ of
\eqref{sigmainfty} associated to $\psi_+$. Let us set
$\phi_+(\la):=\sigma_{\infty,+}(\la)$ and
$\phi_-(\la):=\phi-\phi_+(\la)$. One has
$(x^{-\ndemi}\phi_-({0}))|_{x=0}=\psi_-\in{^0\Sigma}_-$ and
$D_g\phi_-(0)=-D_g\phi_+(0)$. As in the proof of Proposition
\ref{poisson}, we have
\[\sigma_{+}(\la)=\phi_+(\la)-R_+(\la)(D_g+ i\la)\phi_+(\la)=E_+(\la)\psi_+.\]
and in particular, since all the terms in the composition on the
right hand side are holomorphic near $\la=0$, we obtain that
\[E_+(0)\psi_+=\phi_+(0)-R_+(0)D_g\phi_+(0)=\phi_+(0)+R_+(0)D_g\phi_-(0).\]
Now we use Green's formula on a region $\{x\leq \eps\}$ for $\eps>0$ small and by letting
$\eps\to 0$ we deduce easily from \eqref{kernelE} that
\[R_+(0)D_g\phi_-(0)=\phi_-(0)-E_+(0)\psi_-=\phi_-(0).\]
Consequently, we have
proved that $E_+(0)\psi_+=\phi_+(0)+\phi_-(0)=\phi$. A similar
reasoning shows that $E_-(0)\psi_-=\phi$ and this achieves the
proof.
\end{proof}

As a corollary we deduce that $S(0)\psi=\psi$ for $\psi$ as in Proposition \ref{span}, so
\begin{theorem}\label{projector}
The following identity holds for $\mc{C}=\demi({\rm Id}+S(0))$
\[\{(x^{-\ndemi}\sigma)|_{\pl\bbar{X}}; \sigma\in x^{\ndemi}C^{\infty}(\bbar{X},{^0\Sigma}), D_g\sigma=0\}=
\{\mc{C}\psi;\psi\in C^{\infty}(\pl\bbar{X},{^0\Sigma})\}.\]
\end{theorem}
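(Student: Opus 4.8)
The plan is to establish the two inclusions between the sets
\[
A:=\{(x^{-\ndemi}\sigma)|_{\pl\bbar{X}}\; ;\; \sigma\in x^{\ndemi}C^{\infty}(\bbar{X},{^0\Sigma}),\ D_g\sigma=0\}
\qquad\textrm{and}\qquad
B:=\{\mc{C}\psi\; ;\; \psi\in C^{\infty}(\pl\bbar{X},{^0\Sigma})\}.
\]
For the inclusion $A\subset B$, I would take a harmonic spinor $\sigma\in x^{\ndemi}C^{\infty}(\bbar{X},{^0\Sigma})$ and set $\psi:=(x^{-\ndemi}\sigma)|_{\pl\bbar{X}}$. By Proposition \ref{span} we have $E(0)\psi=2\sigma$, and then Proposition \ref{p:harmonic} applied to this same $\psi$ gives the boundary expansion $E(0)\psi=x^{\ndemi}({\rm Id}+S(0))\psi+O(x^{\ndemi+1})$. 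Comparing the two expressions for $E(0)\psi$ at order $x^{\ndemi}$ yields $2\psi=({\rm Id}+S(0))\psi$, i.e.\ $S(0)\psi=\psi$, hence $\mc{C}\psi=\demi({\rm Id}+S(0))\psi=\psi$. Thus $\psi=\mc{C}\psi\in B$, which proves $A\subset B$; in particular this already shows $S(0)$ acts as the identity on all of $A$, which is the corollary mentioned just before the theorem statement.

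For the reverse inclusion $B\subset A$, I would start with an arbitrary $\psi\in C^{\infty}(\pl\bbar{X},{^0\Sigma})$ and consider $\sigma:=E(0)\psi$. By Proposition \ref{p:harmonic}, $\sigma$ is a harmonic spinor lying in $x^{\ndemi}C^{\infty}(\bbar{X},{^0\Sigma})$ with leading boundary term $x^{\ndemi}({\rm Id}+S(0))\psi$; hence $(x^{-\ndemi}\sigma)|_{\pl\bbar{X}}=({\rm Id}+S(0))\psi=2\mc{C}\psi$. So $2\mc{C}\psi\in A$, and since $A$ is clearly a linear space (being the image of the linear restriction map on the linear space of harmonic spinors in $x^{\ndemi}C^{\infty}$), we get $\mc{C}\psi\in A$. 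This gives $B\subset A$ and completes the proof.

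The only point requiring a little care is making sure the two inclusions are genuinely using the hypotheses consistently: in the first we invoke Proposition \ref{span}, which requires precisely that $\psi$ arise as $(x^{-\ndemi}\phi)|_{\pl\bbar{X}}$ for a harmonic $\phi\in x^{\ndemi}C^{\infty}$, and in the second we only need the general solvability and boundary-expansion statements of Propositions \ref{poisson} and \ref{p:harmonic}. I do not anticipate a serious obstacle here — the substantive analytic work (the meromorphic continuation of the resolvent, the construction of $E(0)$, the identity $E(0)\psi=2\phi$, and the pseudodifferential nature and symmetry $S(0)^*=S(0)^{-1}=S(0)$) has all been done in the cited results and in Propositions \ref{span}, \ref{p:harmonic}, and \ref{propofS}; the theorem is essentially a clean repackaging of Proposition \ref{span} together with the leading-order boundary asymptotics. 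If anything, the one thing to double-check is that linearity of $A$ lets one divide by $2$, i.e.\ that $\mc{C}\psi\in A$ follows from $2\mc{C}\psi\in A$, which is immediate.
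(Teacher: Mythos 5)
Your proposal is correct and is essentially the paper's own argument: the inclusion $A\subset B$ is the remark ``$S(0)\psi=\psi$ for $\psi$ as in Proposition \ref{span}'' made just before the theorem (obtained exactly as you do, by comparing $E(0)\psi=2\phi$ with the leading-order expansion from Proposition \ref{p:harmonic}), and the inclusion $B\subset A$ is the direct application of Proposition \ref{p:harmonic} that the paper leaves implicit.
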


\section{Dirac operator on compact manifolds with boundary}

\subsection{Calder\'on projector and scattering operator at $0$}
Now we let $D_{\bar{g}}$ be the Dirac operator on a smooth compact
spin manifold with boundary $(\bbar{X},\bbar{g})$, and we denote by $\Sigma$ the
spinor bundle. We recall that the \emph{Cauchy data space} of
$D_{\bar{g}}$ is given by
\[\mc{H}_\pl:=\{\phi|_{\pl\bbar{X}}, \phi\in C^{\infty}(\bbar{X},\Sigma), D_{\bar{g}}\phi=0\}\]
i.e., it is the space of boundary values of smooth harmonic spinors on
$\bbar{X}$ for $D_{\bar{g}}$. The orthogonal \emph{Calder\'on
projector} $P_{\bbar{\cH}_\pl}$ is a projector acting on
$L^2(\pl\bbar{X},\Sigma)$ and whose range is the $L^2$-closure
$\bbar{\mc{H}}_\pl$. Booss and Wojciechowski \cite{BoW} studied
Fredholm properties of boundary value problems for Dirac type
operators on manifolds with boundary, they found that if $P$ is a
pseudo-differential projector on the boundary, the operator 
$D_P^+:{\rm
Dom}(D_P^+)\to C^{\infty}(\bbar{X},{\Sigma}^+)$ with domain
\[{\rm Dom}(D_P^+):=\{\phi\in C^{\infty}(\bbar{X},\Sigma^+); P(\phi|_{\pl\bbar{X}})=0\}\]
is Fredholm if and only if $P\circ
P_{\bbar{\cH}_\pl}:\mc{H}_\pl\to {\rm ran}(P)$ is
Fredholm, and their indices agree. One of the main problems in this setting is to
construct Calder\'on projectors, there exist methods by
Wojciechowski \cite{BoW} which use the invertible double
construction, but a special product structure near the boundary has to
be assumed. Our purpose is to construct the Calder\'on projector
in a general setting for the Dirac operator using its conformal
covariance and the scattering theory of Dirac operators on
asymptotically hyperbolic manifolds developed in \cite{GMP}.

Let $x$ be the distance to the boundary, which is smooth near $\pl\bbar{X}$,
and modify it on a compact set of $X$ so that it becomes smooth on $\bbar{X}$, we still denote it by $x$.
Define a metric $g$ conformal to $\bbar{g}$ by
\[g:=x^{-2}\bbar{g},\]
this is a complete metric on the interior $X$ which is asymptotically hyperbolic.
The associated Dirac operator $D$ is related to $D_{\bar{g}}$ by the conformal law change
\[D_g=x^{\ndemi+1}D_{\bar{g}}x^{-\ndemi}.\]
Notice that this formula appears with a wrong exponent in several places
in the literature, e.g.\ \cite[Prop.\ 1.3]{Hitchin}, \cite[Thm.\ II.5.24]{LawMik}.
Let ${^0\Sigma}$ be the rescaled spin bundle defined in Section
\ref{AH}, then there is a canonical identification between
$\Sigma$ and ${^0\Sigma}$.  We
deduce that the Cauchy data space may also be given by
\[\mc{H}_\pl=\{(x^{-\ndemi}\sigma)|_{\pl\bbar{X}}; \sigma\in x^{\ndemi}C^{\infty}(\bbar{X},{^0\Sigma}), D_g\sigma=0\}.\]
Combining this and Theorem \ref{projector}, we obtain

\begin{theorem}\label{proj2}
The $L^2$-closure of the Cauchy data space $\bbar{{\mc{H}}}_\pl$ is
given by the range of $\mc{C}=\demi({\rm Id}+S(0))$ on
$L^2(\pl\bbar{X},{^0\Sigma})$, in particular,
$P_{\bbar{\mc{H}}_\pl}=\mc{C}$.
\end{theorem}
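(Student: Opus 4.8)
The plan is to combine Theorem~\ref{projector} with the conformal identification recalled above and then pass to $L^2$-closures; essentially all the analytic content is already available, and what is left is a short bookkeeping argument. First I would record that the substitution $\phi=x^{-\ndemi}\sigma$, together with the covariance $D_g=x^{\ndemi+1}D_{\bar g}x^{-\ndemi}$ and the canonical identification $\Sigma\cong{^0\Sigma}$, gives a bijection between $\{\sigma\in x^{\ndemi}C^\infty(\bbar X,{^0\Sigma}):D_g\sigma=0\}$ and $\{\phi\in C^\infty(\bbar X,\Sigma):D_{\bar g}\phi=0\}$ that commutes with restriction to $\pl\bbar X$; this is precisely the description of $\mc{H}_\pl$ displayed just before the statement. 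Feeding it into Theorem~\ref{projector} yields the equality of subspaces of $C^\infty(\pl\bbar X,{^0\Sigma})$, namely $\mc{H}_\pl=\mc{C}\big(C^\infty(\pl\bbar X,{^0\Sigma})\big)$, where the right side lies in $C^\infty$ because $\mc{C}=\demi({\rm Id}+S(0))$ is a pseudodifferential operator of order $0$ (Proposition~\ref{propofS}).

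Next I would check that the $L^2$ structure making $\mc{C}$ an orthogonal projector agrees with the one defining the Calder\'on projector. The operator $\mc{C}$ is orthogonal for the density ${\rm dv}_{h_0}$ with $h_0=(x^2g)|_{T\pl\bbar X}$; but $x^2g=\bbar g$ by construction, so $h_0=\bbar g|_{\pl\bbar X}$, the boundary volume densities coincide, and the canonical identification ${^0\Sigma}|_{\pl\bbar X}\cong\Sigma|_{\pl\bbar X}$ is a fiberwise isometry, so the relevant $L^2$ inner products on $\pl\bbar X$ are literally the same. In particular $\mc{C}$ is, by Proposition~\ref{propofS} and the remark after it (using $S(0)^*=S(0)^{-1}=S(0)$), the orthogonal projector onto its range with respect to the inner product that defines $P_{\bbar{\mc{H}}_\pl}$.

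Then I would pass to closures. Since $\pl\bbar X$ is a closed manifold, $C^\infty(\pl\bbar X,{^0\Sigma})$ is dense in $L^2$, and since $\mc{C}$ is a bounded operator on $L^2$ whose range is closed ($\mc{C}^2=\mc{C}=\mc{C}^*$), one has $\overline{\mc{C}(C^\infty)}=\overline{\mc{C}(L^2)}={\rm ran}(\mc{C})$: the inclusion $\overline{\mc{C}(C^\infty)}\subseteq{\rm ran}(\mc{C})$ is immediate, and conversely every $u=\mc{C}u$ is the $L^2$-limit of $\mc{C}f_n$ with $f_n\in C^\infty$, $f_n\to u$. Hence $\bbar{\mc{H}}_\pl=\overline{\mc{H}_\pl}=\overline{\mc{C}(C^\infty)}={\rm ran}(\mc{C})$. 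Since $P_{\bbar{\mc{H}}_\pl}$ and $\mc{C}$ are both \emph{orthogonal} projectors onto the same closed subspace $\bbar{\mc{H}}_\pl={\rm ran}(\mc{C})$ of $L^2(\pl\bbar X,{^0\Sigma})$, and the orthogonal projection onto a closed subspace is unique, we conclude $P_{\bbar{\mc{H}}_\pl}=\mc{C}$.

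I do not expect a genuine obstacle: Theorem~\ref{projector} and Proposition~\ref{propofS} carry the weight. The one step that deserves real care is the second one --- verifying that the identification $\Sigma\cong{^0\Sigma}$ intertwines the two boundary $L^2$ inner products with no leftover conformal factor --- because without it one would only get that the \emph{ranges} of $\mc{C}$ and $P_{\bbar{\mc{H}}_\pl}$ coincide, not the projectors themselves.
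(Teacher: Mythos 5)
Your proposal is correct and follows the same route the paper takes: it combines the conformal description of $\mc{H}_\pl$ with Theorem~\ref{projector}, which is exactly the one-line argument the authors give (``Combining this and Theorem~\ref{projector}, we obtain''). Your added care about the compatibility of the boundary $L^2$ structures (the observation that $h_0=(x^2g)|_{T\pl\bbar X}=\bbar g|_{T\pl\bbar X}$ and the isometric identification $\Sigma\cong{^0\Sigma}$) and the closure bookkeeping (range of a bounded idempotent is closed, hence $\overline{\mc{C}(C^\infty)}=\mathrm{ran}(\mc{C})$) is filling in details the paper leaves implicit, not a different approach.
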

Remark that no assumption is needed on the geometry of
$(\bbar{X},\bbar{g})$ (this was needed for instance for the double
construction in \cite{BoW}). 

Another consequence of our
construction is that $S(0)$ anti-commutes with the endomorphism
${\rm cl}(\nu)$ of Section \ref{AH} and thus
\begin{prop}
The operator $\mc{C}$ satisfies $-\rm{cl}(\nu)\, \mc{C}\,
\rm{cl}(\nu)=\mathrm{Id}-\mc{C}$, in other words, the
$L^2$-closure of the Cauchy data space $\bbar{\mc{H}}_\pl$ is a
Lagrangian subspace in $L^2(\partial \bbar{X}, {^0\Sigma})$ with
respect to the symplectic structure $(v,w):=\langle {\rm cl}(\nu)
v, w \rangle_{h_0}$ for $v,w\in L^2(\partial \bbar{X}, {^0\Sigma})$
where $h_0=(\bbar{g})|_{T\pl \bbar{X}}$.
\end{prop}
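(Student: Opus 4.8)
The plan is to derive the anti-commutation $-\cl(\nu)\mc{C}\cl(\nu)=\mathrm{Id}-\mc{C}$ from the previously established fact that $S(0)$ anti-commutes with $\cl(\nu)$, and then to translate this algebraic identity, together with the self-adjointness and idempotency of $\mc{C}$, into the statement that $\bbar{\mc{H}}_\pl$ is Lagrangian. First I would record the symplectic form: for $v,w\in L^2(\pl\bbar{X},{}^0\Sigma)$ set $\omega(v,w):=\langle \cl(\nu)v,w\rangle_{h_0}$. Since $\cl(\nu)^*=-\cl(\nu)$ (Clifford multiplication by a unit vector is skew-adjoint) and $\cl(\nu)^2=-\mathrm{Id}$, the form $\omega$ is a genuine (nondegenerate) skew-symmetric pairing: $\omega(w,v)=\langle\cl(\nu)w,v\rangle=\langle w,-\cl(\nu)v\rangle=-\overline{\langle\cl(\nu)v,w\rangle}$, so on the real vector space it is antisymmetric. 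A closed subspace $V$ is Lagrangian for $\omega$ precisely when $V^{\perp_\omega}=V$, equivalently $\cl(\nu)V=V^{\perp}$ (the ordinary $L^2$-orthogonal complement), equivalently the orthogonal projector $P_V$ onto $V$ satisfies $\cl(\nu)P_V\cl(\nu)^{-1}=\mathrm{Id}-P_V$, i.e. $-\cl(\nu)P_V\cl(\nu)=\mathrm{Id}-P_V$ using $\cl(\nu)^{-1}=-\cl(\nu)$.

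So the core of the proof reduces to two things: (i) verifying the displayed identity for $\mc{C}$, and (ii) checking it really does force the Lagrangian property. For (i), since $\mc{C}=\demi(\mathrm{Id}+S(0))$, we compute
\[
-\cl(\nu)\,\mc{C}\,\cl(\nu)=-\tfrac12\cl(\nu)(\mathrm{Id}+S(0))\cl(\nu)
=-\tfrac12\big(\cl(\nu)^2+\cl(\nu)S(0)\cl(\nu)\big).
\]
Using $\cl(\nu)^2=-\mathrm{Id}$ and the anti-commutation $\cl(\nu)S(0)=-S(0)\cl(\nu)$, the second term is $-\cl(\nu)S(0)\cl(\nu)=S(0)\cl(\nu)^2=-S(0)$, hence $-\cl(\nu)\mc{C}\cl(\nu)=-\tfrac12(-\mathrm{Id}-S(0))=\demi(\mathrm{Id}+S(0))$ — wait, that gives $\mc{C}$ again, so I must instead use $-\cl(\nu)S(0)\cl(\nu)=-(-S(0)\cl(\nu))\cl(\nu)=S(0)\cl(\nu)^2=-S(0)$; combined with $-\cl(\nu)^2=\mathrm{Id}$ this yields $-\cl(\nu)\mc{C}\cl(\nu)=\demi(\mathrm{Id}-S(0))=\mathrm{Id}-\mc{C}$, as claimed. (The only subtlety is getting the signs straight, which is a routine check.) That $S(0)$ anti-commutes with $\cl(\nu)$ is stated to follow from the construction in the text: the Poisson operators $E_\pm(0)$ interchange the $\pm i$-eigenbundles ${}^0\Sigma_\pm$ of $\cl(\nu)$, so $S(0)$ maps ${}^0\Sigma_\pm\to{}^0\Sigma_\mp$ and therefore anti-commutes with $\cl(\nu)$.

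For (ii), I would argue: from $-\cl(\nu)\mc{C}\cl(\nu)=\mathrm{Id}-\mc{C}$ and $\cl(\nu)^{-1}=-\cl(\nu)$ we get $\cl(\nu)\,\mathrm{ran}(\mc{C})=\mathrm{ran}(\mathrm{Id}-\mc{C})=\ker(\mc{C})$. Since $\mc{C}=P_{\bbar{\mc{H}}_\pl}$ is an \emph{orthogonal} projector (Theorem \ref{proj2} together with Proposition \ref{propofS} giving $S(0)^*=S(0)$), its kernel is exactly the $L^2$-orthogonal complement $\bbar{\mc{H}}_\pl^{\perp}$. Hence $\cl(\nu)\bbar{\mc{H}}_\pl=\bbar{\mc{H}}_\pl^{\perp}$. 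Translating into the symplectic language: $w\in\bbar{\mc{H}}_\pl^{\perp_\omega}$ iff $\langle\cl(\nu)v,w\rangle=0$ for all $v\in\bbar{\mc{H}}_\pl$ iff $w\perp\cl(\nu)\bbar{\mc{H}}_\pl=\bbar{\mc{H}}_\pl^{\perp}$ iff $w\in\bbar{\mc{H}}_\pl$. Thus $\bbar{\mc{H}}_\pl^{\perp_\omega}=\bbar{\mc{H}}_\pl$, which is the definition of a Lagrangian subspace.

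There is no real obstacle here; the statement is essentially a formal consequence of results already in hand. The one point deserving a sentence of care is the claim that $S(0)$ anti-commutes with $\cl(\nu)$ — if it has not been proved in full earlier I would spell out that the Poisson operator $E(0)$ was \emph{defined} via the two half-operators $E_\pm(0)$ acting on the eigenbundles ${}^0\Sigma_\pm$, and Proposition \ref{poisson} gives $\sigma_\pm^+(0)|_{\pl\bbar{X}}\in{}^0\Sigma_\mp$, so $S(0):{}^0\Sigma_\pm\to{}^0\Sigma_\mp$, which is exactly anti-commutation with the endomorphism $\cl(\nu)$ whose $\pm i$-eigenspaces are ${}^0\Sigma_\pm$. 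Everything else is bookkeeping with skew-adjointness of Clifford multiplication and the identity $\cl(\nu)^2=-\mathrm{Id}$.
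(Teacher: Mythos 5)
Your proof is correct and follows essentially the same route as the paper: derive $-\cl(\nu)\mc{C}\cl(\nu)=\mathrm{Id}-\mc{C}$ from the anti-commutation $\cl(\nu)S(0)=-S(0)\cl(\nu)$ (which in turn comes from $S(0)$ mapping ${}^0\Sigma_\pm$ to ${}^0\Sigma_\mp$), then read off the Lagrangian property. The paper simply states the conclusion more tersely (``both $\bbar{\mc{H}}_\pl$ and $\bbar{\mc{H}}_\pl^\perp$ are isotropic''), whereas you spell out the equivalence $\cl(\nu)\bbar{\mc{H}}_\pl=\bbar{\mc{H}}_\pl^\perp \Leftrightarrow \bbar{\mc{H}}_\pl^{\perp_\omega}=\bbar{\mc{H}}_\pl$; both are fine.
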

\begin{proof}
The equality $-\rm{cl}(\nu)\, \mc{C}\,
\rm{cl}(\nu)=\mathrm{Id}-\mc{C}$ follows easily from $\rm{cl}(\nu)
S(0) = -S(0) \rm{cl}(\nu)$ since
\[
-\frac12 \rm{cl}(\nu) (\mathrm{Id}+ S(0))\rm{cl}(\nu) =\frac12
(\mathrm{Id}-\rm{cl}(\nu) S(0)\rm{cl}(\nu)) =\frac12 (\mathrm{Id}
-S(0)).
\]
This immediately implies that $\bbar{\mc{H}}_\pl$ and $\bbar{\mc{H}}^\perp$ are both 
isotropic subspaces in $L^2(\partial \bbar{X}, ^0\Sigma)$, which completes the proof.  
\end{proof}

\subsection{Calder\'on projector and the operator $K$}\label{caldproj}  By
Propositions \ref{p:harmonic} and \ref{span}, the extension map 
$K:C^\infty(\pl\bbar{X},{^0\Sigma})\to
C^{\infty}(\bbar{X},{^0\Sigma})$ from spinors on $M$ to harmonic spinors on $\bbar{X}$ is given by
\[K\psi=\demi x^{-\ndemi}E(0)\psi\]
where $E(0)$ is the operator defined in \eqref{defelasla} for the Dirac operator $D$ associated to $g=\bbar{g}/x^2$.
The adjoint $E(0)^*$ of $E(0)$ with respect to ${\rm dv}_g$
is a map from $\dot{C}^\infty(\bbar{X},{^0\Sigma})$ to
$C^\infty(\pl\bbar{X},{^0\Sigma})$ such that
\[ \int_X \cjg E(0)\varphi,\psi\cjd_g{\rm dv}_g=\int_{\pl\bbar{X}}\cjg \varphi,E(0)^*\psi\cjd_{h_0}{\rm
dv}_{h_0}\]  for all $\psi\in\dot{C}^\infty(\bbar{X},{^0\Sigma})$
and $\varphi\in C^{\infty}(\pl\bbar{X},{^0\Sigma})$. Here $h_0$
denotes the metric over $\pl\bbar{X}$ given by the restriction of
$\bbar{g}$ to the bundle $T\pl\bbar{X}$. Similarly the adjoint of
$K$ with respect to the metric ${\rm{dv}}_{\bbar{g}}$ satisfies
\[ \int_X \cjg K\varphi,\psi\cjd_g{\rm dv}_{{\bbar{g}}}=\int_{\pl\bbar{X}}\cjg \varphi,K^*\psi\cjd_{h_0}{\rm dv}_{h_0}\]
and since ${\rm dv}_{{g}}=x^{-(n +1)}{\rm dv}_{\bbar{g}}$, we
obtain
\[K^*=\demi E(0)^*x^{\ndemi+1}\]
where the adjoint for $E(0)$ is with respect to $g$ while the adjoint for $K$ is with respect to $\bar{g}$. 

The Schwartz kernels of $E(\la),E^*(\la)$ and $R_\pm(\la)$ are
studied in \cite{GMP}. They are shown to be polyhomogeneous
conormal on a blown-up space. Let us now describe them, by
referring the reader to the Appendix for what concerns blown-up
manifolds and polyhomogeneous conormal distributions. The first
space is the stretched product (see for instance \cite{MM,MaCPDE}
where it was first introduced)
\begin{align*}\bbar{X}\x_0\bbar{X}=[\bbar{X}\x\bbar{X};\Delta_{\pl}],&& 
\Delta_\pl:=\{(m,m)\in\pl\bbar{X}\x\pl\bbar{X}\}\end{align*}
obtained by blowing-up the diagonal $\Delta_\pl$ in the corner,
the blow-down map is denoted by $\beta:\bbar{X}\x_0\bbar{X}\to
\bbar{X}\x\bbar{X}$. This is a smooth manifold with corners which
has $3$ boundary hypersurfaces: the front face $\ff$ obtained from
blowing-up $\Delta_\pl$, and the right and left boundaries
$\rb$ and $\lb$  which respectively project down to
$\bbar{X}\x\pl\bbar{X}$ and $\pl\bbar{X}\x\bbar{X}$ under $\beta$.
One can similarly define the blow-ups
\begin{align}\label{stretched}
\bbar{X}\x_0\pl\bbar{X}:=[\bbar{X}\x\pl\bbar{X};\Delta_\pl] , &&
\pl\bbar{X}\x_0\bbar{X}:=[\pl\bbar{X}\x\bbar{X};\Delta_\pl]
\end{align}
which are manifolds with $1$ corner of codimension $2$ and $2$ boundary
hypersurfaces: the front face $\ff$ obtained from the blow-up and
the left boundary $\lb$ which projects to $\pl\bbar{X}\x\pl\bbar{X}$
for $\bbar{X}\x_0\pl\bbar{X}$, respectively the front face $\ff$ and right
boundary $\rb$ for $\pl\bbar{X}\x_0\bbar{X}$. We call
$\beta_l,\beta_r$ the blow-down maps of \eqref{stretched} and we
let
 $\rho_{\ff},\rho_{\lb}$ and $\rho_{\rb}$ be boundary defining functions of these
hypersurfaces in each case. Notice that the two spaces in
\eqref{stretched} are canonically diffeomorphic to the
submanifolds $\{\rho_{\rb}=0\}\subset\bbar{X}\x_0\bbar{X}$ and
$\{\rho_{\lb}=0\}\subset\bbar{X}\x_0\bbar{X}$. Like in Section 3.2
in \cite{GMP}, the bundle ${^0\Sigma}\boxtimes{^0\Sigma}^*$ lifts
smoothly to these 3 blown-up manifolds through $\beta, \beta_l$
and $\beta_r$, we will use the notation
\begin{align*}
\mc{E}:=\beta^*({^0\Sigma}\boxtimes{^0\Sigma}^*),&& \mc{E}_j:=\beta_j^*({^0\Sigma}\boxtimes{^0\Sigma}^*) \textrm{ for }j=l,r
\end{align*}
for these bundles. The interior diagonal in $X\x X$ lifts to a
submanifold $\Delta_{\iota}$ in $\bbar{X}\x_0\bbar{X}$ which
intersects the boundary only at the front face (and does so
transversally). Then it follows from \cite[Prop 3.2]{GMP}
that the resolvent $R_\pm(\la)$ has a Schwartz kernel
$R_\pm(\la;m,m')\in C^{-\infty}(\bbar{X}\x\bbar{X};\mc{E})$ which
lifts to $\bbar{X}\x_0\bbar{X}$ to a polyhomogeneous conormal
distribution on $\bbar{X}\x_0\bbar{X}\setminus \Delta_\iota$
\begin{equation}\label{e:R(0)}
\beta^*R_\pm(\la)\in
(\rho_{\rb}\rho_{\lb})^{\la+\ndemi}C^{\infty}(\bbar{X}\x_0\bbar{X}\setminus
\Delta_\iota;\mc{E}).\end{equation} 
Combined with Theorem \ref{proj2}, this structure result on $R_\pm(\la)$ implies 
\begin{cor}\label{vanishing residue}
The Schwartz kernel of the Calder\'on projector $P_{\bbar{\mc{H}}_\pl}$ associated to the Dirac operator 
has an asymptotic expansion in polar coordinates around the diagonal without log terms. In particular, the Wodzicki-Guillemin local residue density of $P_{\bbar{\mc{H}}_\pl}$  vanishes.
\end{cor}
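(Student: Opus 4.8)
The plan is to exploit Theorem~\ref{proj2}, which identifies the Calder\'on projector $P_{\bbar{\mc{H}}_\pl}$ with $\mc{C}=\demi({\rm Id}+S(0))$, together with the structure of the resolvent kernel recorded in \eqref{e:R(0)} and the construction of $S(0)$ from the boundary value of the Poisson kernel via \eqref{kernelE}. The key point is that the full symbol expansion of $S(\la)$ near $\la=0$ is \emph{holomorphic} in $\la$, so that $S(0)$ has a classical (one-step) polyhomogeneous symbol with \emph{no logarithmic terms} in its expansion; equivalently, its Schwartz kernel, pulled back to the blown-up space $\pl\bbar{X}\x_0\pl\bbar{X}$ (i.e.\ to polar coordinates around the diagonal $\Delta_\pl\subset M\x M$), is polyhomogeneous conormal with an integral expansion containing only pure powers $\rho_{\ff}^{j}$ and no $\log\rho_{\ff}$. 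Since the Wodzicki--Guillemin residue density is the coefficient of the $\log$ term of order $-n$ in such an expansion, its vanishing is then immediate.

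First I would recall from \cite{GMP} (Section 4.3, summarized in Proposition~\ref{propofS}) that $S(\la)$ is obtained by taking the boundary restriction $[x^{-\ndemi-\la}R_\pm(\la;m,x',y')]|_{x'=0}$, composing with $\cl(\nu)$, and restricting to the boundary again; by \eqref{kernelE} this is exactly $E_\pm(\la)$, and $S_\pm(\la)$ is the further restriction $\sigma^+_\pm(\la)|_{\pl\bbar{X}}$. The holomorphy of $R_\pm(\la)$ in $\{\Re(\la)\geq 0\}$ (Proposition~\ref{resolvent}), together with the fact that the leading behaviors $x^{\ndemi\pm\la}$ depend holomorphically on $\la$ near $\la=0$ and, crucially, that $\la=0$ is not a value where two indicial roots collide in a way producing logarithms (the roots are $\ndemi-\la$ and $\ndemi+\la$, which coincide \emph{at} $\la=0$ but the construction in \eqref{indicialeq}--\eqref{sigmainfty} of $\sigma_{\infty,\pm}(\la)$ is holomorphic near $\la=0$ precisely because the obstruction $(k-\la\pm\la)$ and $(\la-k\pm\la)$ never vanish for $k\in\nn$ near $\la=0$ on the relevant $\pm$ component), shows that the Schwartz kernel of $S(\la)$ depends holomorphically on $\la$ near $0$ as a family of classical pseudodifferential operators. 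Next I would invoke the standard fact that a holomorphic family $\la\mapsto S(\la)\in\Psi^{2\la}$ whose order moves linearly has, at any fixed $\la_0$, a polyhomogeneous symbol expansion \emph{without} logarithmic terms: the logarithmic terms in the kernel of a complex-power family are precisely the derivatives with respect to the order, so a single member of the family has none. Applying this at $\la_0=0$ gives that $S(0)$, hence $\mc{C}=\demi({\rm Id}+S(0))=P_{\bbar{\mc{H}}_\pl}$, has a polar-coordinate expansion around the diagonal with only integer powers $\rho_{\ff}^j$ and no $\log\rho_{\ff}$.

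Finally, the statement about the residue density follows formally: the Wodzicki--Guillemin local residue density of a classical $\Psi$DO of order $0$ on the $n$-manifold $M$ is $\int_{|\xi|=1} s_{-n}(m,\xi)\,d\xi\;|dm|$ where $s_{-n}$ is the symbol term of homogeneity $-n$, and this term is exactly the coefficient of $\log\rho_{\ff}$ of order $-n$ in the kernel expansion on $M\x_0 M$ (up to a universal constant); since that coefficient vanishes identically, so does the residue density. I would also note the complementary remark already in the excerpt: this recovers, with a new proof, the known fact for product-type metrics (where Calder\'on and APS projectors agree modulo smoothing), and it is the new ingredient that in degree $0$ the residue density vanishes even without product structure.

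The main obstacle is the clean justification that the family $S(\la)$ is holomorphic near $\la=0$ \emph{as a family of pseudodifferential operators with kernels polyhomogeneous on $M\x_0 M$, jointly in $\la$ and in the base}, with no logarithmic degeneration at $\la=0$; this is where the possible collision of indicial roots $\ndemi\pm\la$ could a priori create $\log x$ terms in the Poisson kernel and hence $\log\rho_{\ff}$ terms in $S(0)$. The resolution is that the relevant solution $\sigma_{\infty,\pm}(\la)$ is built componentwise on ${^0\Sigma}_\pm$ where the indicial factor is $(k-\la+\la)=k$ or $(k+\la+\la)=k+2\la$, never vanishing for $k\geq 1$ near $\la=0$, so the Borel-summed formal solution is genuinely holomorphic in a neighborhood of $\la=0$; this is exactly the content of Lemma 4.4 and Proposition 4.6 of \cite{GMP} that we are quoting, and it is what must be cited carefully rather than reproved. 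Once this holomorphy is in hand, the absence of $\log$ terms in $S(0)$ and the vanishing of the residue density are routine.
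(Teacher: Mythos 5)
Your reduction to showing that $S(0)$ has a log-free expansion (via Theorem~\ref{proj2}) is exactly the paper's first step, and your instinct that holomorphy of the scattering family across $\la=0$ is the underlying reason is also right. However, the ``standard fact'' you invoke is not correct, and this is a genuine gap.

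You claim that a holomorphic family $\la\mapsto S(\la)\in\Psi^{2\la}$ whose order moves linearly automatically has, at any fixed $\la_0$, a log-free kernel expansion around the diagonal. (Note also a terminology slip: classical symbols never have log terms; the log terms appear only in the \emph{kernel} polar-coordinate expansion, and that is where the Wodzicki residue lives.) As a general statement the claim is false: if $B$ is any classical $\Psi$DO of order $0$ on $M$ with nonvanishing local residue, then $S(\la):=(1+D_M^2)^{\la}B$ is a holomorphic family of classical $\Psi$DOs of order $2\la$, yet $S(0)=B$ has a log term in its kernel expansion. Holomorphy of the family alone imposes nothing at $\la=0$; what is needed is that the $\la$-dependence of the kernel enters \emph{only} through an overall power $\rho_{\ff}^{-2\la-n}$ multiplying a function smooth on the blown-up space, uniformly in $\la$.

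That is precisely what the paper supplies, via a more concrete route than yours: it uses directly the resolvent structure \eqref{e:R(0)}, $\beta^*R_\pm(\la)\in(\rho_\rb\rho_\lb)^{\la+\ndemi}C^\infty(\bbar{X}\x_0\bbar{X};\mc{E})$, together with the formula (from \cite{GMP}) expressing the off-diagonal kernel of $S(\la)$ as $i[(xx')^{-\la-\ndemi}(R_+-R_-)(\la)]|_{x=x'=0}$. Since $\beta^*x=\rho_\lb\rho_\ff F$ and $\beta^*x'=\rho_\rb\rho_\ff F'$ with $F,F'>0$ smooth, one gets $\beta^*((xx')^{-\la-\ndemi}R_\pm(\la))\in\rho_\ff^{-2\la-n}C^\infty(\bbar{X}\x_0\bbar{X};\mc{E})$; restricting to the corner $\lb\cap\rb\simeq M\x_0 M$ and setting $\la=0$ gives $\rho_\ff^{-n}C^\infty(M\x_0 M)$, i.e.\ a log-free expansion, from which the vanishing of the residue density follows. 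Your observation about the indicial roots $\ndemi\pm\la$ and the non-vanishing of the obstructions $(k\mp\la+\la)$ for $k\geq1$ is relevant to why the formal solution $\sigma_{\infty,\pm}(\la)$ is holomorphic near $\la=0$, and it is part of why the resolvent has the structure \eqref{e:R(0)} without log terms; but on its own it controls only the asymptotics in $x$ at the boundary, not the full conormal structure of the kernel at the lifted diagonal. You should replace the appeal to a generic fact about holomorphic $\Psi$DO families by the concrete kernel computation above, citing \cite[Prop.~3.2]{GMP} for \eqref{e:R(0)} and the formula for $S(\la)$ from \cite[Sec.~3, eq.~(4.10)]{GMP}.
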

\begin{proof} Using Theorem \ref{proj2}, it suffices to show that $S(0)$ has this property. 
From \cite[eq (4.10), Sec. 3]{GMP}, the kernel of $S(\la)$ is given outside the diagonal by 
\[S(\la;y,y')=i[(xx')^{-\la-\ndemi}R_+(\la;x,y,x',y')|_{x=x'=0}-(xx')^{-\la-\ndemi}R_-(\la;x,y,x',y')|_{x=x'=0}]\]
Since a boundary defining function $x'$ of
$\bbar{X}\x\pl\bbar{X}$ in $\bbar{X}\x\bbar{X}$ lifts to
$\beta^*x'=\rho_{\rb}\rho_{\ff}F$ for some $F>0$ smooth on
$\bbar{X}\x_0\bbar{X}$ (and similarly $\beta^*x=\rho_{\lb}\rho_{\ff}F$ for some smooth $F>0$), 
one can use \eqref{e:R(0)}  to obtain 
\[\beta^*((xx')^{-\la-\ndemi}R_\pm(\la)\in \rho_{\ff}^{-2\la-n}C^\infty(\bbar{X}\x_0\bbar{X}; \mc{E}).\]
Restricting to $x=x'=0$, $y\not=y'$ corresponds to restricting to the corner $\lb\cap \rb $ which is canonically diffeomorphic to
$M\x_0 M=[M\x M; \Delta_\pl]$ and thus 
the pull-back $\beta_\pl^* S(\la)$ of the kernel of $S(\la)$ has an expansion in polar coordinates at $\Delta_\pl$
with no log terms after setting $\la=0$.
\end{proof}

From \eqref{kernelE}, we deduce that the kernel $E(\la;m,y')$ of
$E(\la)$ lifts to
\[\beta_l^*E(\la)\in \rho_\lb^{\la+\ndemi}\rho_{\ff}^{-\la-\ndemi}C^{\infty}(\bbar{X}\x_0\pl\bbar{X};\mc{E}_l)\]
where we used the identification between $\{\rho_{\rb}=0\}\subset \bbar{X}\x_0\bbar{X}$ and $\bbar{X}\x_0\pl\bbar{X}$.
Here, obviously, this is the kernel of the operator acting from $L^2(M,{^0\Sigma};{\rm dv}_{h_0})$
to $L^2(X,{^0\Sigma};{\rm dv}_{g})$. We have
a similar description
\[\beta_r^*E^*(\lambda)\in\rho_\rb^{\la+\ndemi}\rho_{\ff}^{-\la-\ndemi}C^{\infty}(\pl\bbar{X}\x_0\bbar{X};\mc{E}_r).\]
So we deduce that the Schwartz kernel $K^*(y,x',y')\in
C^{\infty}(\pl\bbar{X}\x\bbar{X};{^0\Sigma}\boxtimes{^0\Sigma}^*)$
of $K^*$ with respect to the density $|{\rm dv}_{h_0}\otimes{\rm
dv}_{\bbar{g}}|=x^{n+1}|{\rm dv}_{h_0}\otimes{\rm dv}_{g}|$ lifts
through $\beta_r$ to
\begin{equation}\label{kernelK*}
\beta_{r}^*K^*=\demi \beta_r^*(x'^{-\ndemi}E^*(0))\in
\rho_{\ff}^{-n}C^{\infty}(\pl\bbar{X}\x_0\bbar{X};\mc{E}_r).
\end{equation}
Similarly, for $K$ we have
\begin{equation}\label{e:k1}
\beta_{l}^*K\in
\rho_{\ff}^{-n}C^{\infty}(\bbar{X}\x_0\pl\bbar{X};\mc{E}_l).
\end{equation}
When it is clear, we may omit $^0\Sigma$ in the notations
$L^2(\bbar{X},^0\Sigma, {\rm dv}_g)$, $L^2(\pl X, ^0\Sigma, {\rm
dv}_{h_0})$ for simplicity. Now we have
\begin{lemma}\label{Kbounded}  The operator $K$ is
bounded from $L^2(\pl\bbar{X},{\rm dv}_{h_0})$ to $L^2(\bbar{X},{\rm
dv}_{\bbar{g}})$, and so is its adjoint $K^*$ from
$L^2(\bbar{X},{\rm dv}_{\bbar{g}})$ to $L^2(\pl\bbar{X},{\rm
dv}_{h_0})$. The range of $K^*$ acting on $L^2(\bbar{X},{\rm
dv}_{\bbar{g}})$ is contained in $\bbar{\mc{H}}_\pl$ and the kernel
of $K$ contains $\bbar{\mc{H}}_\pl^\perp$.
\end{lemma}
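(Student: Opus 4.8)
The plan is to establish the three assertions in the order \emph{(i)} $L^2$-boundedness of $K$ (whence of $K^*$), \emph{(ii)} $\bbar{\mc{H}}_\pl^\perp\subseteq\ker K$, \emph{(iii)} $\mathrm{ran}(K^*)\subseteq\bbar{\mc{H}}_\pl$, the point being that \emph{(iii)} will follow formally from \emph{(i)} and \emph{(ii)}.

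\emph{Step (i): boundedness.} First I would read off from \eqref{e:k1} that the Schwartz kernel $K(m,y')$ (taken with respect to $dv_{h_0}$ in the variable $y'\in M$) is smooth on $\bbar{X}\x M$ off the boundary diagonal $\Delta_\pl$ and, near $\Delta_\pl$, is bounded by $C\rho_\ff^{-n}$, where in coordinates $m=(x,y)$ near $\pl\bbar{X}$ one has $\rho_\ff\asymp(x^2+d_M(y,y')^2)^{1/2}$, with $d_M$ the distance on $M$. Freezing $x\in(0,\eps)$ and viewing $k_x(y,y'):=K((x,y),y')$ as a kernel on $M\x M$, Schur's test (the bound being symmetric in $y,y'$) gives
\[
\|k_x\|_{L^2(M)\to L^2(M)}\le C\sup_{y\in M}\int_M|k_x(y,y')|\,dv_{h_0}(y')\le C\Big(1+\int_0^\eps(x^2+r^2)^{-n/2}r^{n-1}\,dr\Big)\le C'\big(1+\log(1/x)\big).
\]
Since near $M$ one has $dv_{\bbar{g}}=a(x,y)\,dx\,dv_{h_0}(y)$ with $a$ smooth, positive and bounded, it follows that
\[
\int_{\{x<\eps\}}|K\psi|^2\,dv_{\bbar{g}}\le C\int_0^\eps\|k_x\|_{L^2(M)\to L^2(M)}^2\,dx\;\|\psi\|_{L^2(M)}^2\le C''\|\psi\|_{L^2(M)}^2,
\]
because $(1+\log(1/x))^2$ is integrable near $0$; on $\{x\ge\eps\}$ the kernel of $K$ is smooth on a compact set and contributes $\le C\|\psi\|_{L^2(M)}^2$. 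Hence $K$ is bounded, and $K^*$ is bounded as its Hilbert-space adjoint (equivalently, the same estimate applies to \eqref{kernelK*}).

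\emph{Step (ii): $K$ kills $\bbar{\mc{H}}_\pl^\perp$.} Take $\psi\in C^\infty(\pl\bbar{X},{^0\Sigma})$ with $\psi\in\bbar{\mc{H}}_\pl^\perp$. By Theorem \ref{proj2}, $\mc{C}\psi=P_{\bbar{\mc{H}}_\pl}\psi=0$, so Proposition \ref{p:harmonic} shows that $\sigma:=E(0)\psi$ satisfies $D_g\sigma=0$ and $\sigma\in x^{\ndemi+1}C^\infty(\bbar{X},{^0\Sigma})$. Using $dv_g=x^{-(n+1)}dv_{\bbar{g}}$ one checks that $|\sigma|^2\,dv_g\le Cx\,dx\,dv_{h_0}$ near $M$ and is bounded on compact subsets away from $M$, so $\sigma\in L^2(X,dv_g)$; as $\sigma$ is then a smooth $L^2$ null solution of the self-adjoint operator $D_g$, whose spectrum is purely absolutely continuous by Proposition \ref{resolvent}, we conclude $\sigma=0$, i.e.\ $K\psi=\demi x^{-\ndemi}\sigma=0$. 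Now $S(0)\in\Psi^0(\pl\bbar{X},{^0\Sigma})$ by Proposition \ref{propofS}, so $\mathrm{Id}-\mc{C}=\demi(\mathrm{Id}-S(0))$ preserves $C^\infty$; hence $(\mathrm{Id}-\mc{C})\big(C^\infty(\pl\bbar{X},{^0\Sigma})\big)$ is a dense subspace of $\bbar{\mc{H}}_\pl^\perp$ consisting of smooth spinors, and by the boundedness proved in Step (i), $K$ vanishes on all of $\bbar{\mc{H}}_\pl^\perp$.

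\emph{Step (iii) and the main difficulty.} Finally $\overline{\mathrm{ran}(K^*)}=(\ker K)^\perp\subseteq(\bbar{\mc{H}}_\pl^\perp)^\perp=\bbar{\mc{H}}_\pl$, the latter being closed, so in particular $\mathrm{ran}(K^*)\subseteq\bbar{\mc{H}}_\pl$. The only genuinely analytic point is Step (i): the fibrewise norm $\|k_x\|_{L^2(M)\to L^2(M)}$ really does blow up, logarithmically, as $x\to0$, so one cannot conclude by a Schur estimate directly on $\bbar{X}\x M$; the fix is to slice in the normal variable and exploit that a logarithmic divergence is square-integrable in $x$. (Alternatively one could derive the $L^2$-boundedness from the composition/pushforward calculus on the $0$-stretched products, but the slicing argument is more elementary and self-contained.)
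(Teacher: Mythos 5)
Your proof is correct, and it takes a genuinely different route from the paper's on both analytic points. For the $L^2$-boundedness, the paper does not estimate the kernel of $K$ directly: it invokes the identity $R_+(0)-R_-(0)=-\tfrac{i}{2}E(0)E(0)^*$ from \cite[Lemma 4.7]{GMP} to write $KK^*=\tfrac{i}{2}x^{-\ndemi}(R_+(0)-R_-(0))x^{\ndemi+1}$, conjugates by the unitary $x^{-(n+1)/2}$ between $L^2({\rm dv}_g)$ and $L^2({\rm dv}_{\bbar g})$, and then reduces to the $L^2({\rm dv}_g)$-boundedness of $x^{1/2}(R_+(0)-R_-(0))x^{1/2}$, which follows from the structure \eqref{e:R(0)} together with Mazzeo's mapping theorem in the $0$-calculus \cite[Thm.~3.25]{MaCPDE}. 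Your slicing-plus-Schur argument is more elementary and self-contained: it exploits only the order $-n$ conormal bound at $\ff$ from \eqref{e:k1} and the square-integrability in $x$ of the logarithmic growth of the slice norms, avoiding the $0$-calculus boundedness machinery entirely; it does, as you note, require the slicing because a naive Schur estimate on $\bbar X\times M$ fails. For the range statement, the paper again proceeds directly, observing that for $\psi\in\dot C^\infty(\bbar X;{^0\Sigma})$ the spinor $x^{-\ndemi}(R_+(0)-R_-(0))(x^{\ndemi+1}\psi)$ is a smooth $D_{\bar g}$-harmonic spinor on $\bbar X$ whose boundary trace is (up to a constant) $K^*\psi$, so $K^*\psi\in\mc{H}_\pl$, and concludes by density. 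You instead prove the equivalent dual inclusion $\bbar{\mc H}_\pl^\perp\subseteq\ker K$ first, combining Theorem~\ref{proj2} and Proposition~\ref{p:harmonic} (so that $\mc C\psi=0$ forces $E(0)\psi\in x^{\ndemi+1}C^\infty$, hence $L^2$ for $D_g$) with the absence of $L^2$-kernel of $D_g$ from Proposition~\ref{resolvent}, and then dualize. Both routes are sound; yours trades the explicit resolvent-difference identity for the spectral input that $D_g$ has purely absolutely continuous spectrum, which is also from \cite{GMP} but is logically a slightly different ingredient.
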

\begin{proof}
It is shown in Lemma 4.7 of \cite{GMP} the following identity
\[R_+(0)-R_-(0)=-\frac{i}{2}(E_+(0)E_+(0)^*+ E_-(0)E_-(0)^*)=-\frac{i}{2}E(0)E(0)^*
\] as operators from $\dot{C}^\infty(\bbar{X},{^0\Sigma})$ to
$x^{\ndemi}C^{\infty}(\bbar{X},{^0\Sigma})$, so in particular this
implies that
\[KK^*=\demi ix^{-\ndemi}(R_+(0)-R_-(0))x^{\ndemi+1}\]
as operators. Using the isometry $\psi\to x^{-(n+1)/2}{\psi}$  from $L^2(X,{\rm
dv}_g)$ to $L^2(X,{\rm dv}_{\bbar{g}})$, we see that the operator
$KK^*$ is bounded on $L^2(\bbar{X},{\rm dvol}_{\bbar{g}})$ if and
only if $x^\demi(R_+(0)-R_-(0))x^{\demi}$ is bounded on
$L^2(X,{\rm dv}_g)$. Now by \eqref{e:R(0)}, the Schwartz kernel of
$x^{\demi}R_\pm(0)x'^\demi$ lifts on the blown-up space
$\bbar{X}\x_0\bbar{X}$ as a conormal function
\[\beta^*(x^{\frac12}R_\pm(0)x'^{\frac12}) \in
\rho_{\lb}^{\frac{n+1}{2}}\rho_{\rb}^{\frac{n+1}{2}}\rho_{\ff}\,C^{\infty}(\bbar{X}\x_0\bbar{X};\mc{E})\]
since $(xx')^\demi$ lifts to $\bbar{X}\x_0\bbar{X}$ to
$(\rho_{\rb}\rho_{\lb})^\demi\rho_\ff F$ for some $F>0$ smooth on
$\bbar{X}\x_0\bbar{X}$. We may then use Theorem 3.25 of Mazzeo
\cite{MaCPDE} to conclude that it is bounded on $L^2(X,{\rm
dv}_{g})$, and it is even compact according to Proposition 3.29 of
\cite{MaCPDE}. As a conclusion, $K^*$ is bounded from $L^2(X,{\rm
dv}_{\bbar{g}})$ to $L^2(\pl\bbar{X},{\rm dv}_{h_0})$ and $K$ is
bounded on the dual spaces. The fact that the range of $K^*$ is
contained in $\bbar{\mc{H}}_\pl$ comes directly from a density
argument and the fact that for all
$\psi\in\dot{C}^\infty(\bbar{X};{^0\Sigma})$,
$K^*\psi={-\demi i[x^{-\ndemi}(R_+(0)-R_-(0))(x^{\ndemi+1}\psi)]|_{\pl\bbar{X}}}$,
and $x^{-\ndemi}(R_+(0)-R_-(0))(x^{\ndemi+1}\psi)$ is a smooth harmonic spinor
of $D_{\bar{g}}$ on $\bbar{X}$.
\end{proof}

The operator $K^*K$ acts on $L^2(\pl\bbar{X},{\rm dv}_{h_0})$ as a compact operator, we actually obtain
\begin{lemma}\label{pseudo-1}
The operator $K^*K$ is a classical pseudo-differential operator of order $-1$ on $\pl\bbar{X}$ and its principal symbol is given by 
\[\sigma_{\rm pr}(K^*K)(y;\mu)=\frac{1}{4}
|\mu|^{-1}_{h_0}\Big({\rm Id}+i{\rm cl}(\nu){\rm cl}\Big(\frac{\mu}{|\mu|_{h_0}}\Big)\Big)\]
\end{lemma}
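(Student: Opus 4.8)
The plan is to compute the principal symbol of $K^*K$ by relating it to the scattering operator $S(0)$, whose principal symbol is already known from Proposition \ref{propofS}. The key identity is the factorization of the Calder\'on projector. First I would recall that $P_{\bbar{\mc{H}}_\pl}=\mc{C}=\demi(\mathrm{Id}+S(0))$ by Theorem \ref{proj2}, and that $\mc{C}$ projects onto $\bbar{\mc{H}}_\pl$, which is precisely the range of $K$. Thus $K\mc{C}=K$ and $\mc{C}K^*=K^*$, which forces $K^*K\,\mc{C}=K^*K$ and $\mc{C}\,K^*K=K^*K$; in other words $K^*K$ commutes with $\mc{C}$ and vanishes on $\bbar{\mc{H}}_\pl^\perp=\ker(K)$. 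So on the symbol level $\sigma_{\rm pr}(K^*K)$ is supported on the range of $\sigma_{\rm pr}(\mc{C})=\demi(\mathrm{Id}+\sigma_{\rm pr}(S(0)))$, which at $\la=0$ is $\demi(\mathrm{Id}+i\,{\rm cl}(\nu){\rm cl}(\mu/|\mu|_{h_0}))$ — precisely the projection appearing in the claimed formula. It remains to pin down the scalar factor $\frac14|\mu|^{-1}_{h_0}$.

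The scalar factor I would obtain from the structure of the Schwartz kernels established above. From \eqref{kernelK*} and \eqref{e:k1}, both $\beta_r^*K^*$ and $\beta_l^*K$ are smooth multiples of $\rho_\ff^{-n}$ near the front face. Composing, $K^*K$ has a kernel on $M\x_0 M$ conormal to $\Delta_\pl$, and the order of the front-face singularity together with the dimension count gives a pseudo-differential operator of order $-1$; this is the qualitative statement in Lemma \ref{pseudo-1}, which also matches the model computation for the disc in the introduction where $K^*K$ had kernel $-\frac{1}{4\pi}\log(1-z\bbar w)/(z\bbar w)$, a classical $\Psi$DO of order $-1$. For the precise symbol, the cleanest route is to use the known relation between $K$, $E(0)$ and the resolvent: from $KK^*=\demi i x^{-\ndemi}(R_+(0)-R_-(0))x^{\ndemi+1}$ and the composition formulas of \cite{GMP}, together with $S(0)=i\,[\,(xx')^{-\ndemi}(R_+(0)-R_-(0))\,]|_{\pl\bbar{X}}$, one relates $K^*K$ to $S(0)$ composed with the appropriate power of the boundary Laplacian. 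Concretely I expect an identity of the form $K^*K=\demi\,\mc{C}\,(|D_M|^2+c)^{-1/2}$ up to lower order (as the parenthetical remark in the introduction suggests, with $c$ a curvature-type zeroth order term), whose principal symbol is exactly $\demi\cdot\demi|\mu|_{h_0}^{-1}(\mathrm{Id}+i\,{\rm cl}(\nu){\rm cl}(\mu/|\mu|_{h_0}))$.

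Alternatively, and perhaps more robustly, I would compute the symbol directly by a local model computation near a boundary point, freezing coefficients: in the half-space model $\hh^{n+1}$ with the hyperbolic metric, the resolvent kernel $R_\pm(\la)$ is explicit in terms of hypergeometric/Legendre functions, hence so are $E(0)$, $E(0)^*$ and their restrictions; then $K^*K$ becomes a Fourier multiplier on $\rr^n$ and one reads off $\sigma_{\rm pr}$ by evaluating a one-dimensional integral of Bessel-type factors (the $\Gamma$-function ratio in $\sigma_{\rm pr}(S(0))$, which is $1$ at $\la=0$, arises this way). The projection factor $\demi(\mathrm{Id}+i\,{\rm cl}(\nu){\rm cl}(\mu/|\mu|))$ is clifford-algebraic and survives the freezing of coefficients; the scalar $\frac14|\mu|^{-1}$ is the content of the integral.

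The main obstacle I anticipate is bookkeeping rather than conceptual: keeping track of the conformal weights $x^{\pm\ndemi}$, $x^{\ndemi+1}$ and the density factors $x^{n+1}$ relating ${\rm dv}_g$ to ${\rm dv}_{\bbar g}$ through the compositions $E(0)\mapsto K$, $E(0)^*\mapsto K^*$, so that the front-face orders in \eqref{kernelK*}–\eqref{e:k1} combine to give order exactly $-1$ and not a shifted value, and so that the numerical constant comes out as $\frac14$. The clifford-algebra identity ${\rm cl}(\nu){\rm cl}(\xi)+{\rm cl}(\xi){\rm cl}(\nu)=0$ for $\xi\in T^*M$, and $({\rm cl}(\nu){\rm cl}(\mu/|\mu|))^2=-\mathrm{Id}\cdot(-1)=\mathrm{Id}$ — so that $\demi(\mathrm{Id}+i\,{\rm cl}(\nu){\rm cl}(\mu/|\mu|))$ is genuinely an idempotent — will be used repeatedly and should be checked once carefully at the start.
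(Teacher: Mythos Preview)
Your observation that $K^*K$ commutes with $\mc{C}$ and therefore its principal symbol must satisfy $\sigma_{\rm pr}(\mc{C})\,\sigma_{\rm pr}(K^*K)=\sigma_{\rm pr}(K^*K)\,\sigma_{\rm pr}(\mc{C})=\sigma_{\rm pr}(K^*K)$ is correct and useful, but it does not by itself pin down the symbol. It tells you only that $\sigma_{\rm pr}(K^*K)$ has the form $\sigma_{\rm pr}(\mc{C})\,B\,\sigma_{\rm pr}(\mc{C})$ for some endomorphism $B$; there is no a priori reason $B$ should be scalar on the range of the projection. The ``expected identity'' $K^*K=\tfrac12\,\mc{C}\,(|D_M|^2+c)^{-1/2}+\text{lower order}$ is exactly what has to be \emph{proved}, and the disc example is only suggestive. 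You also conflate $KK^*$ with $K^*K$ when invoking the resolvent identity; the formula $KK^*=\tfrac{i}{2}x^{-n/2}(R_+(0)-R_-(0))x^{n/2+1}$ does not give direct access to $K^*K$.

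The paper takes the second route you sketch---a direct local/model computation---but carries it out fully and in a way you do not. First, the qualitative claim that $K^*K\in\Psi^{-1}(M)$ is not just an order count at the front face: one needs a composition result for kernels in $I^{-1}_{\rm lf}(\bbar{X}\times M,\Delta_\pl)$ and $I^{-1}_{\rm lf}(M\times\bbar{X},\Delta_\pl)$, which is Proposition~\ref{compositionKL} (proved by stationary phase in Appendix~\ref{appB}). Second, the paper computes $\sigma_K$ explicitly by Fourier-transforming the leading term $\tfrac12\pi^{-(n+1)/2}\Gamma(\tfrac{n+1}{2})\rho^{-n-1}(x+\cl(\nu)\cl(z))$ of $K$ at the front face, obtaining $\sigma_K(y;\xi,\mu)=i(\xi^2+|\mu|^2)^{-1}(\xi+\cl(\nu)\cl(\mu))$, and then evaluates the integral $\sigma_{K^*K}(y;\mu)=(2\pi)^{-2}\int_0^\infty\hat\sigma_{K^*}(y;-x,\mu)\hat\sigma_K(y;x,\mu)\,dx$ from Proposition~\ref{compositionKL}. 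The Clifford structure and the factor $\tfrac14|\mu|^{-1}$ both fall out of this explicit integral; no separate argument is needed to identify the scalar. Your bookkeeping worry about conformal weights is legitimate, but it is absorbed into the identification of the front-face leading term of $K=\tfrac12 x^{-n/2}E(0)$ already done in \cite{GMP}.
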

\begin{proof}
According to \eqref{e:k1} and Lemma \ref{rel0calculus1}, 
the operator $K=\demi x^{-\frac{n}{2}}E(0)$ is a log-free classical pseudodifferential operator 
in the class $I^{-1}_{\rm lf}(\bbar{X}\x M;\mc{E})$ in the terminology of Subsection \ref{interiortoboundary}, while $K^*$ is in the class 
$I^{-1}_{\rm lf}(M\x \bbar{X}\;\mc{E})$. 
We can therefore apply Proposition \ref{compositionKL} to deduce that $K^*K\in \Psi^{-1}(M;\mc{E})$ is a classical pseudo-differential operator of order $-1$ on $M$. Moreover, from Proposition \ref{compositionKL}, the principal symbol is given by 
\[\sigma_{K^*K}(y,\mu)=(2\pi)^{-2}\int_{0}^\infty \hat{\sigma}_{K^*}(y;-x,\mu).\hat{\sigma}_K(y;x,\mu)dx\]
where hat denotes Fourier transform in the variable $\xi$ and $\sigma_{K^*}(y,\xi,\mu),\sigma_{K}(y;\xi,\mu)$ are
the principal symbols of $K^*,K$.  We have to compute for $|\mu|$ large the integral above.
We know from \cite{GMP} that the leading asymptotic in polar coordinates around $\Delta_\pl$ (or equivalently the normal operator at the front face) of $K=\demi x^{-n/2}E(0)$ at the submanifold $\Delta_\pl$
is given in local coordinates by 
\[K(x,y,y+z)\sim \demi \pi^{-\frac{n+1}{2}}\Gamma\left(\frac{n+1}{2}\right)\rho^{-n-1}(x+\cl(\nu)\cl(z))\] 
where $\rho:=(x^2+|z|^2)^{1/2}$ is the defining function
for the front face of $\bbar{X}\x_0 M$. To obtain the symbol, we need to compute the 
inverse Fourier transform in $(x,z)$ variables of the homogeneous distribution $\rho^{-n-1}(x+\cl(\nu)\cl(z))$.
To do this, we use the analytic family of $L^1$ tempered
distributions $\omega(\lambda)=\rho^{-n-1+\lambda}$ for $\Re(\lambda)>0$. We have
\[\cF_{(x,z)\to (\xi,\mu)}(\omega(\lambda))= (2\pi)^{\frac{n+1}{2}} 2^{\lambda-\frac{n+1}{2}}
\frac{\Gamma\left(\frac{\lambda}{2}\right)}
{\Gamma\left(\frac{n+1-\lambda}{2}\right)} R^{-\lambda}\] for
$R:=|(\xi,\mu)|$. This allows us to compute
$\cF(x\omega(\lambda))$ and $\cF(z_j \omega(\lambda))$,
which turn out to be regular at $\lambda=0$. Thus by
setting $\lambda=0$ we get after a short computation
\[\sigma_K(y;\xi,\mu)=i(\xi^2+|\mu|^2)^{-1}(\xi+\cl(\nu)\cl(\mu)).\]
This gives $\sigma_{K^*}(y,\xi',\mu)=-i((\xi')^2+|\mu|^2)^{-1}(\xi'-\cl(\nu)\cl(\mu))$. 
Use the fact that the Fourier transform of the
Heaviside function is $\pi\delta-\frac{i}{\xi}$.  Then 
\[4\sigma_{K^*K}(y;\mu)=\pi^{-1} \int_{\rr} R^{-2}d\xi -\pi^{-2}i \int_{\rr^2}(RR')^{-2}(\xi\xi'+|\mu|^2
+(\xi'-\xi)\cl(\nu)\cl(\mu))\frac{d\xi d\xi'}{\xi-\xi'}\]
in the sense of principal value for $(\xi-\xi')^{-1}$.
The first term gives $|\mu|^{-1}$. In the second term, by symmetry in $\xi,\xi'$,
only the term $\pi^{-2}i\cl(\nu)\cl(\mu)\int_{\rr^2}(RR')^{-2}d\xi d\xi'$ contributes, and it
gives $i|\mu|^{-2}\cl(\nu)\cl(\mu)$. This ends the proof.
\end{proof}
In fact, we could also compute the principal symbol using the push-forward approach but the computation is 
slightly more technical.

We deduce easily from the two last lemmas
\begin{cor}\label{KstarKinv}
There exists a pseudo-differential operator of order $1$ on $\pl\bbar{X}$, denoted $(K^*K)^{-1}$ such that
$(K^*K)^{-1}K^*K=\mc{C}$.
\end{cor}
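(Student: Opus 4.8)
The plan is to reduce Corollary~\ref{KstarKinv} to the inversion of a genuinely \emph{elliptic} pseudo-differential operator of order $-1$ manufactured from $K^*K$ and the orthogonal projector $\mc{C}=P_{\bbar{\mc{H}}_\pl}$. First I would record the symbolic picture. By Lemma~\ref{pseudo-1} the principal symbol of $K^*K$ is $\demi|\mu|^{-1}_{h_0}\,p(y,\mu)$, where $p(y,\mu):=\demi({\rm Id}+i\,{\rm cl}(\nu){\rm cl}(\mu/|\mu|_{h_0}))$; since ${\rm cl}(\nu)^2={\rm cl}(\mu/|\mu|_{h_0})^2=-{\rm Id}$ and ${\rm cl}(\nu)$ anticommutes with ${\rm cl}(\mu/|\mu|_{h_0})$, a one-line computation gives $p^2=p$. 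On the other hand, by Theorem~\ref{proj2} and Proposition~\ref{propofS}, $\mc{C}=\demi({\rm Id}+S(0))$ is a self-adjoint classical pseudo-differential operator of order $0$ with $\mc{C}^2=\mc{C}$ and principal symbol exactly $p$. Thus $K^*K$ and $\mc{C}$ have principal symbols built from the same projection $p$.

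Next I would fix an auxiliary positive, self-adjoint, invertible $\Lambda\in\Psi^1(\pl\bbar{X},{^0\Sigma})$ with scalar principal symbol $|\mu|_{h_0}\,{\rm Id}$ (for instance a square root of ${\rm Id}+\nabla^*\nabla$) and set
\[B:=K^*K+({\rm Id}-\mc{C})\,\Lambda^{-1}\,({\rm Id}-\mc{C})\ \in\ \Psi^{-1}(\pl\bbar{X},{^0\Sigma}).\]
This $B$ is self-adjoint, and its principal symbol is $|\mu|^{-1}_{h_0}\big(\demi p+({\rm Id}-p)\big)=|\mu|^{-1}_{h_0}({\rm Id}-\demi p)$, which is invertible with inverse $|\mu|_{h_0}({\rm Id}+p)$; hence $B$ is elliptic of order $-1$. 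I would then check $\ker B=0$. If $B\psi=0$ then, pairing with $\psi$ in $L^2(\pl\bbar{X},{^0\Sigma})$,
\[0=\langle B\psi,\psi\rangle=\|K\psi\|^2+\langle\Lambda^{-1}({\rm Id}-\mc{C})\psi,({\rm Id}-\mc{C})\psi\rangle,\]
and since $\Lambda^{-1}>0$ both summands vanish, so $K\psi=0$ and $({\rm Id}-\mc{C})\psi=0$, i.e. $\psi=\mc{C}\psi$. Ellipticity of $B$ forces $\psi\in C^\infty(\pl\bbar{X},{^0\Sigma})$, whence $\psi=\mc{C}\psi\in\mc{C}\big(C^\infty(\pl\bbar{X},{^0\Sigma})\big)=\mc{H}_\pl$ by Theorem~\ref{projector}; but by Proposition~\ref{p:harmonic} together with $K\psi=\demi x^{-\ndemi}E(0)\psi$ one has $(K\psi)|_{\pl\bbar{X}}=\mc{C}\psi=\psi$ for $\psi\in\mc{H}_\pl$, so $K\psi=0$ gives $\psi=0$. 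A self-adjoint elliptic operator of order $-1$ with trivial kernel is Fredholm of index $0$ and injective, hence an isomorphism $H^s(\pl\bbar{X})\to H^{s+1}(\pl\bbar{X})$ for every $s$; standard elliptic theory then shows $B^{-1}\in\Psi^1(\pl\bbar{X},{^0\Sigma})$.

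Finally I would close the loop. Since $({\rm Id}-\mc{C})\mc{C}=0$ and, by Lemma~\ref{Kbounded}, $\bbar{\mc{H}}_\pl^\perp\subset\ker K$ --- equivalently $K\mc{C}=K$, hence $K^*K\mc{C}=K^*K$ --- we get $B\mc{C}=K^*K$. Therefore $(K^*K)^{-1}:=B^{-1}\in\Psi^1(\pl\bbar{X},{^0\Sigma})$ satisfies $(K^*K)^{-1}K^*K=B^{-1}B\mc{C}=\mc{C}$, which is the assertion. The only step that goes beyond symbol bookkeeping is the injectivity of $B$, i.e. ruling out \emph{smooth} elements of $\ker K$ inside $\bbar{\mc{H}}_\pl$; this is the main obstacle, and it is overcome precisely by using that $B$ is elliptic (to gain smoothness) together with the surjectivity of $\mc{C}:C^\infty(\pl\bbar{X},{^0\Sigma})\to\mc{H}_\pl$ from Theorem~\ref{projector}.
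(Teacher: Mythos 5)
Your proof is correct and takes essentially the same route as the paper: both introduce the auxiliary operator $K^*K + c\,({\rm Id}-\mc{C})\Lambda^{-1}({\rm Id}-\mc{C})$ (with $\Lambda$ a first-order invertible operator of scalar principal symbol $|\mu|_{h_0}$), observe it is elliptic of order $-1$ and injective, and invert it. The only cosmetic difference is that you set $(K^*K)^{-1}:=B^{-1}$ while the paper sets $(K^*K)^{-1}:=A^{-1}\mc{C}$; either choice yields $(K^*K)^{-1}K^*K=\mc{C}$, and your injectivity argument (quadratic-form pairing plus elliptic regularity) just spells out the paper's ``$K$ is injective on $\bbar{\mc{H}}_\pl$'' in more detail.
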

\begin{proof}
Using Lemmas  \ref{Kbounded}, \ref{pseudo-1},
we deduce that if $D_{h_0}$ is the Dirac operator on the boundary
$\pl\bbar{X}$ equipped with the metric
$h_0=\bbar{g}|_{T\pl\bbar{X}}$, then $A:= K^*K+\frac14({\rm Id}-\mc{C})({\rm Id}+D^2_{h_0})^{-\demi}({\rm Id}-\mc{C})$
is a classical pseudo-differential operator of order $-1$, and by Lemma \ref{pseudo-1} its principal
symbol on the cosphere bundle equals  ${\rm Id}$. Moreover, it is straightforward that $\ker A=0$  since $K$ is injective on
$\bbar{\mc H}_{\pl}$.
This implies that $A$ is elliptic and has a classical pseudo-differential inverse $B$ which is of order $1$.
Let us define $(K^*K)^{-1}:=B\mc{C}$, which is classical pseudo-differential of order 
$1$, then one has $(K^*K)^{-1}K^*K=(K^*K)^{-1}A=\mc{C}$.
\end{proof}

\subsection{The orthogonal projector on harmonic spinors on $\bbar{X}$}
We will construct and analyze the projector on the
$L^2(\bbar{X},{\rm dv}_{\bbar{g}})$-closure $\bbar{\mc{H}}(D_{\bar{g}})$
of
\[\mc{H}(D_{\bar{g}}):=\{\psi\in C^{\infty}(\bbar{X};{^0\Sigma}); D_{\bar{g}}\psi=0\}.\]
For this, let us now define the operator
\begin{equation}
P:=K(K^*K)^{-1}K^*
\end{equation}
which maps continuously $\dot{C}^\infty(\bbar{X};{^0\Sigma})$ to
$C^{\infty}(\bbar{X};{^0\Sigma})$. Since $K$ is bounded on
$L^2(\pl\bbar{X},{\rm dv}_{h_0})$, Lemma \ref{Kbounded} and
Corollary \ref{KstarKinv} imply easily the following
\begin{cor}\label{PK=K}
The operator $P$ satisfies $PK=K\mc{C}=K$ on $L^2(\pl\bbar{X}, {\rm
dv}_{h_0})$.
\end{cor}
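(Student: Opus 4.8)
The plan is to verify the two claimed identities $PK = K\mc{C}$ and $K\mc{C} = K$ separately, both on the dense subspace $C^\infty(\pl\bbar{X},{^0\Sigma})$ and then extend by continuity. First I would establish $K\mc{C} = K$. By Theorem \ref{proj2} we have $\mc{C} = P_{\bbar{\mc{H}}_\pl}$, the orthogonal projector onto $\bbar{\mc{H}}_\pl$. On the other hand, Lemma \ref{Kbounded} tells us that $\bbar{\mc{H}}_\pl^\perp \subset \ker K$, so $K$ annihilates the orthogonal complement of the range of $\mc{C}$. Writing any $\psi \in L^2(\pl\bbar{X},{\rm dv}_{h_0})$ as $\psi = \mc{C}\psi + (\mathrm{Id}-\mc{C})\psi$ with $(\mathrm{Id}-\mc{C})\psi \in \bbar{\mc{H}}_\pl^\perp$, we get $K\psi = K\mc{C}\psi + 0 = K\mc{C}\psi$. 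This uses that $K$ is bounded on $L^2$ (Lemma \ref{Kbounded}) so that this decomposition argument is legitimate after passing to $L^2$-limits; equivalently one checks it first on smooth $\psi$ and uses density of $C^\infty(\pl\bbar{X},{^0\Sigma})$ together with the $L^2$-boundedness of both $K$ and $\mc{C}$.

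Next I would establish $PK = K$. Expanding the definition $P = K(K^*K)^{-1}K^*$, we have $PK = K(K^*K)^{-1}(K^*K)$. By Corollary \ref{KstarKinv}, $(K^*K)^{-1}K^*K = \mc{C}$ as operators (the pseudo-differential operator $(K^*K)^{-1}$ of order $1$ was constructed precisely so that this holds). Therefore $PK = K\mc{C}$, and combining with the first step, $PK = K\mc{C} = K$. One must be slightly careful about mapping properties: $P$ is only asserted to map $\dot{C}^\infty(\bbar{X};{^0\Sigma})$ to $C^\infty(\bbar{X};{^0\Sigma})$, but the composition $PK$ makes sense because $K$ maps $C^\infty(\pl\bbar{X};{^0\Sigma})$ into $C^\infty(\bbar{X};{^0\Sigma})$ and the identity $(K^*K)^{-1}K^*K = \mc{C}$ is an identity of bounded operators on $L^2(\pl\bbar{X},{\rm dv}_{h_0})$ (since $(K^*K)^{-1} = B\mc{C}$ with $B$ pseudo-differential of order $1$ and $K^*K$ of order $-1$, the product $B\mc{C}K^*K = \mc{C}$ is of order $0$). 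Hence $PK$ is a well-defined bounded operator on $L^2(\pl\bbar{X},{\rm dv}_{h_0})$ equal to $K$.

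I do not expect any serious obstacle here: the corollary is essentially a formal consequence of the two preceding results, Lemma \ref{Kbounded} (giving $\bbar{\mc{H}}_\pl^\perp \subset \ker K$) and Corollary \ref{KstarKinv} (giving $(K^*K)^{-1}K^*K = \mc{C}$), together with the identification $\mc{C} = P_{\bbar{\mc{H}}_\pl}$ from Theorem \ref{proj2}. The only point requiring a modicum of care is to make sure all compositions are interpreted as identities of bounded operators on the appropriate $L^2$ spaces, so that the algebraic manipulation $PK = K(K^*K)^{-1}K^*K = K\mc{C} = K$ is justified; this follows from the $L^2$-boundedness statements already proved and a density argument.
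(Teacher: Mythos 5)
Your proof is correct and takes exactly the route the paper intends (the paper gives no explicit proof, stating only that Lemma \ref{Kbounded}, Corollary \ref{KstarKinv}, and $L^2$-boundedness of $K$ ``imply easily'' the result): you deduce $K\mc{C}=K$ from $\bbar{\mc{H}}_\pl^\perp\subset\ker K$ together with $\mc{C}=P_{\bbar{\mc{H}}_\pl}$, and $PK=K\mc{C}$ from $(K^*K)^{-1}K^*K=\mc{C}$, checking along the way that all compositions are legitimate as bounded operators on $L^2$.
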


We want to show that $P$ extends to a bounded operator on
$L^2(\bbar{X},{\rm dv}_{\bbar{g}})$ and study the structure of its
Schwartz kernel. We first use the following composition result
which is a consequence of Melrose's push-forward theorem \cite{Me}. 
The definition of polyhomogeneous functions and index sets is recalled
in Appendix \ref{appA}.

\begin{theorem}\label{structP}
The operator $P:=K(K^*K)^{-1}K^*$ has a Schwartz kernel in
$C^{-\infty}(\bbar{X}\x\bbar{X};({^0\Sigma}\boxtimes{^0\Sigma}^*)\otimes \Omega^\demi)$
on $\bbar{X}\x\bbar{X}$ which lifts to $\bbar{X}\x_0\bbar{X}$ through $\beta$ to  $k_P\beta^*(|{\rm dv}_{\bbar{g}}\otimes {\rm dv}_{\bbar{g}}|^\demi)$ with
\begin{align*}
k_P\in \mc{A}_{\rm phg}^{J_{\rm ff},J_{\rm rb},J_{\rm lb}}(\bbar{X}\x_0\bbar{X};\mc{E}), &&
J_{\rm ff}=-(n+1)\cup(-2,1)\cup(0,3),&& J_{\rm rb}=J_{\rm lb}=0
\end{align*}
where $|{\rm dv}_{\bbar{g}}\otimes{\rm dv}_{\bbar{g}}|$ is the
Riemannian density trivializing $\Omega(\bbar{X}\x\bbar{X})$ induced
by $\bbar{g}$.
\end{theorem}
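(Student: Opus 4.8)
The plan is to analyze the Schwartz kernel of $P=K(K^*K)^{-1}K^*$ by composing the three factors on the blown-up spaces and keeping track of index sets through each composition. First I would fix the picture of $\bbar{X}\x_0\bbar{X}$ and record what is already known: by \eqref{kernelK*} and \eqref{e:k1}, the lifted kernels of $K$ and $K^*$ live in $\rho_{\ff}^{-n}C^\infty$ on $\bbar{X}\x_0\pl\bbar{X}$ and $\pl\bbar{X}\x_0\bbar{X}$ respectively, with trivial (i.e.\ index $0$, smooth) behavior at the left/right boundaries; by Corollary \ref{KstarKinv}, $(K^*K)^{-1}\in\Psi^1(\pl\bbar{X})$ is a classical pseudodifferential operator on the closed manifold $M=\pl\bbar{X}$, so its kernel is conormal to the diagonal of $M\x M$ with no boundary issues. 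The strategy is then to compose in two stages: first $L:=(K^*K)^{-1}K^*$, then $P=KL$.

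For the first composition I would view $K^*:L^2(\bbar{X})\to L^2(M)$ as a family (parametrized by the interior variable $x'\in\bbar{X}$) of spinors on $M$, whose singularity as $x'\to\pl\bbar{X}$ is governed by the front face of $\pl\bbar{X}\x_0\bbar{X}$. Applying a pseudodifferential operator of order $1$ on $M$ to this family is handled by the standard interior-boundary calculus machinery referenced in the excerpt (the classes $I^{*}_{\rm lf}$ of Subsection \ref{interiortoboundary} and the composition statements like Proposition \ref{compositionKL}): it preserves polyhomogeneity at $\ff$, raising the order of the front-face singularity by the order of the pseudodifferential operator, i.e.\ $\rho_{\ff}^{-n}\rightsquigarrow\rho_{\ff}^{-n-1}$. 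So $L$ has a kernel on $M\x_0\bbar{X}$ which is polyhomogeneous with leading front-face exponent $-(n+1)$ (and no log terms at this stage), smooth at $\rb$. The second composition $P=KL$ is the integration $\int_M K(\cdot,y')L(y',\cdot)\,{\rm dv}_{h_0}(y')$; this is exactly a pushforward along the fibration that forgets the middle $M$-factor, realized on the appropriate triple space $\bbar{X}\x_0 M\x_0\bbar{X}$ obtained by blowing up the relevant diagonals, and is governed by Melrose's pushforward theorem \cite{Me}. The front face of $\bbar{X}\x_0\bbar{X}$ is hit by the interaction of the two front faces of the factors (each contributing exponent $-(n+1)$) together with the fiber integration over $M$ (dimension $n$), which after the pushforward and collection of terms produces the union index set $-(n+1)\cup(-2,1)\cup(0,3)$ at $\ff$: the isolated term $-(n+1)$ from the ``diagonal-hitting-diagonal'' contribution, and two log-decorated pieces $(-2,1)$ and $(0,3)$ coming from the resonant overlaps where the $K$-singularity and the $L$-singularity combine non-transversally. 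The left and right boundary behavior remains index $0$ because both $K$ and $K^*$ were smooth (index $0$) there and the pushforward over a compact fiber without boundary does not touch $\rb$ or $\lb$.

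The bookkeeping to pin down the exact exponents and log-orders at $\ff$ is the main technical point, and I would organize it by first identifying, on the blown-up triple space, the lifts of $\rho_{\ff}(\bbar{X}\x_0 M)$ and $\rho_{\ff}(M\x_0\bbar{X})$ in terms of the boundary defining functions of the triple-space faces, then reading off the pullback index families, then applying the pushforward theorem to get the index family downstairs; the arithmetic $\mathbb{Z}$-sums and the $+1$ in the log-order at resonances are exactly the content of that theorem. The dimension count $2\cdot(-(n+1)) + n + (\text{shift from the density factors }x^{n+1}\text{ converting }{\rm dv}_g\text{ to }{\rm dv}_{\bbar g}) = -(n+1)$ is what produces the clean isolated term, and the shifts of the other two components by integers with log growth up to order $3$ — hence $\alpha\le 3$ in Theorem \ref{th1}(3) — come from the at-most-triple resonance among the three factors $K$, $(K^*K)^{-1}$, $K^*$.

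The hard part will be tracking the index sets carefully enough through the pushforward to justify $J_{\rm ff}=-(n+1)\cup(-2,1)\cup(0,3)$ rather than something coarser, in particular controlling the log-order: one must verify that the resonances between the front-face expansions of the two composed factors stack at most twice on top of the base $(0,0)$ term (once from each extra factor beyond the first), never producing $(\log\rho_{\ff})^{\ell}$ with $\ell>3$. Everything else — $L^2$-boundedness of $P$ (which follows once $k_P$ is polyhomogeneous with front-face exponent $>-(n+1)$ on the $(-2,1)\cup(0,3)$ part and the $-(n+1)$ part is the kernel of a classical operator of order $0$, hence bounded by Mazzeo's criterion as used already in Lemma \ref{Kbounded}), the smoothness at $\rb$ and $\lb$, and the fact that the density conversion is a smooth positive factor on $\bbar{X}\x_0\bbar{X}$ — is routine given the earlier results in the excerpt.
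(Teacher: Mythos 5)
Your plan shares the paper's broad outline (two-stage composition via Melrose's push-forward theorem on a triple blown-up space), but it glosses over exactly the point where a naive execution breaks down, and that is where the paper does something genuinely clever that is absent from your write-up. You propose to first form $L=(K^*K)^{-1}K^*$ by "applying a pseudodifferential operator of order $1$ on $M$" to $K^*$ and assert that this "preserves polyhomogeneity at $\ff$, raising the order by $1$." But the composition lemma you can actually invoke — Lemma \ref{composition} — explicitly requires the pseudodifferential factor $A$ on $M$ to have \emph{negative} order. This is not a cosmetic hypothesis: it is needed so that $K_{\rm rf}=E_{\rm ff}+\tfrac{n}{2}>0$ on the triple space, which is exactly the integrability condition for the push-forward over the fibre $M$ to converge. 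With $(K^*K)^{-1}$ of order $+1$, its lifted kernel on $M\x_0 M$ has front-face index $-\tfrac{n}{2}-1$, so $E_{\rm ff}+\tfrac{n}{2}=-1$ and the push-forward diverges. Your alternative appeal to Proposition \ref{compositionKL} does not help either: that result composes an operator $\bbar{X}\to M$ with one $M\to\bbar{X}$ and lands on $M$, which is a different shape of composition than what you need here (a $\Psi$DO on $M$ composed with an operator $\bbar{X}\to M$).

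The paper sidesteps this by a factoring trick you do not mention and which is the crux: write
\[
(K^*K)^{-1}K^* \;=\; A\circ B,\qquad A:=(K^*K)^{-1}(\mathrm{Id}+D_{h_0}^2)^{-1},\quad B:=(\mathrm{Id}+D_{h_0}^2)K^*.
\]
Now $A$ is a \emph{classical pseudodifferential operator of order $-1$} on $M$, so it satisfies the hypothesis of Lemma \ref{composition}; and $B$ is obtained from $K^*$ by applying a \emph{differential} operator on $M$, which is a purely local operation whose effect on the lifted kernel is elementary to track (the lift of $\cV(M)$ is smooth on $M\x_0\bbar{X}$, tangent to $\rb$ and transverse to $\ff$, so $(\mathrm{Id}+D_{h_0}^2)$ lowers the $\ff$-order by $2$ and preserves the $\rb$-index). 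Without this decomposition, your claim that applying a positive-order $\Psi$DO to $K^*$ "preserves polyhomogeneity" is exactly what needs to be proved and is not covered by the tools cited; indeed, the diagonal conormal singularity of a positive-order $\Psi$DO combined with the front-face singularity of $K^*$ is precisely the divergent configuration that the factoring avoids. Once you insert this decomposition, the rest of your plan — the triple-space push-forward for $A\circ B$ and then for $K\circ(A\circ B)$, the bookkeeping of index sets, the density conversion factor $\rho_{\ff}^{\ndemi+1}\rho_{\lb}^{\demi}\rho_{\rb}^{\demi}$, and the final $L^2$-boundedness via Mazzeo's criterion — matches the paper's argument.
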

\begin{proof}
We start by composing $A\circ B$ where $A:=(K^*K)^{-1}({\rm
Id}+D_{h_0}^2)^{-1}$ and $B:=({\rm Id}+D_{h_0}^2)K^*$. From
Corollary \ref{KstarKinv}, we know that $A$ is a classical
pseudo-differential operator on $M$ of order $-1$, so its kernel lifts to $M\x_0 M$
as a polyhomogeneous conormal kernel and its index set (as a
$b$-half-density) $E$ is of the form $-\ndemi+1+\nn_0\cup (\ndemi+\nn_0,1)$. 
Now, since the lift of vector fields on $M$
by the b-fibration $M\x_0\bbar{X}\to M\x\bbar{X}\to M$ is smooth,
tangent to the right boundary in $M\x_0\bbar{X}$ and transverse to
the front face $\ff$, we deduce that applying ${\rm Id}+D_{h_0}^2$
to $K^*$ reduces its order at $\ff$ by $2$ and leaves the index
set at $\rb$ invariant, so $({\rm Id}+D_{h_0}^2)K^*$ has a kernel
which lifts on $M\x_0\bbar{X}$ to an element in $\mc{A}_{\rm
phg}^{F_{\ff},F_{\rb}}(M\x_0\bbar{X};\mc{E}_r\otimes
\Omega_b^\demi)$ with
\begin{align*}F_{\ff}=-\ndemi-\frac{3}{2}, && F_{\rb}=\demi.\end{align*}
So using Lemma \ref{composition}, we deduce that $A\circ B$ has a kernel which lifts to $M\x_0\bbar{X}$
as an element in
\begin{align*}
\mc{A}_{\rm phg}^{H_{\ff},H_{\rb}}(M\x_0\bbar{X};\mc{E}_r\otimes \Omega_b^\demi),&&
H_{\ff}\subset (-\ndemi-\frac{1}{2}) \cup (\ndemi-\frac{3}{2},1)\cup (\ndemi+\demi,2),&&  H_{\rb}=-\frac{3}{2}\, \extunion \, \demi
\end{align*}
The index set $H_\rb$ must in fact be $\demi$ since the dual of this composition maps $C^\infty(M;{^0\Sigma})$ into
$C^\infty(\bbar{X};{^0\Sigma})$  (with respect to the density $|{\rm dv}_{\bar{g}}|^\demi$).
Now
the operator $K$ has a kernel lifted to $\bbar{X}\x_0M$ which is in
$\rho_{\ff}^{-\ndemi+\demi}\rho_{\lb}^{\demi}C^{\infty}(\bbar{X}\x_0M;\mc{E}_l\otimes \Omega_b^\demi)$
thus using Lemma \ref{composition} (and the same argument as above to show that the index set is $\demi$ at $\lb,\rb$),
we deduce that the lift $k_P$ of the Schwartz kernel
of $P$ is polyhomogeneous conormal on $\bbar{X}\x_0\bbar{X}$, and
the index set of $k_P$ satisfies  (as a b-half-density) 
\begin{align}\label{defJ}
 J_{\ff} =-\ndemi\cup (\ndemi-1,1)\cup (\ndemi+1,3),&&
J_{\lb}= J_{\rb}=\demi.
\end{align}
Now this completes
the proof since the lift of the half-density $|{\rm
dv}_{\bbar{g}}\otimes {\rm dv}_{\bbar{g}}|^\demi$ is of the form
$\rho_{\ff}^{\ndemi+1}\rho_{\lb}^\demi\rho_{\rb}^\demi
\mu_b^\demi$ where $\mu_b$ is a non vanishing smooth section of
$\Omega_b$.
\end{proof}

\begin{cor}
The operator $P=K(K^*K)^{-1}K^*$ is bounded on $L^2(\bbar{X},{\rm dv}_{\bbar{g}})$
and is the orthogonal projector on the $L^2$-closure of the set of
smooth harmonic spinors for $D_{\bar{g}}$ on $\bbar{X}$, that is,
$P=P_{\bbar{\mc{H}}}$.
\end{cor}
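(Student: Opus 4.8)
The plan is to verify the three defining properties of an orthogonal projector for $P$: that $P$ is bounded and self-adjoint on $L^2(\bbar{X},{\rm dv}_{\bbar{g}})$, that $P^2=P$, and that its range equals $\bbar{\mc{H}}$. Boundedness follows immediately from Theorem \ref{structP}: the lift $k_P$ of the Schwartz kernel to $\bbar{X}\x_0\bbar{X}$ has index sets $J_{\rm rb}=J_{\rm lb}=0$ at the side faces (equivalently $\demi$ as $b$-half-densities) and is $(n+1)$-th order blowing-up only at the front face, so by the standard $L^2$-boundedness criterion on $0$-stretched products (Mazzeo's Theorem 3.25 in \cite{MaCPDE}, already invoked in the proof of Lemma \ref{Kbounded}) the operator is bounded on $L^2(\bbar{X},{\rm dv}_{\bbar{g}})$. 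Self-adjointness is clear: $K^*$ is the adjoint of $K$ with respect to the relevant densities by construction, $K^*K$ is self-adjoint and hence so is $(K^*K)^{-1}$ (since $\mc{C}=P_{\bbar{\mc H}_\pl}$ is self-adjoint and commutes with the elliptic self-adjoint $A$ of Corollary \ref{KstarKinv}), so $P^*=K(K^*K)^{-1}K^*=P$.

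Next I would check $P^2=P$. Formally, $P^2=K(K^*K)^{-1}K^*K(K^*K)^{-1}K^*=K(K^*K)^{-1}\mc{C}(K^*K)^{-1}K^*$ using Corollary \ref{KstarKinv}; since $(K^*K)^{-1}=B\mc{C}$ and $\mc{C}$ is an idempotent commuting with $B$ (as $B$ is a function of the self-adjoint elliptic operator $A$ which commutes with $\mc{C}$), we get $\mc{C}(K^*K)^{-1}=(K^*K)^{-1}$, hence $P^2=K(K^*K)^{-1}K^*=P$. The one subtlety is that these identities were proved on $\dot{C}^\infty$ or on smooth sections; I would extend them to all of $L^2(\bbar{X},{\rm dv}_{\bbar{g}})$ by density, using that $P$, $K$, $K^*$ are all bounded (Lemma \ref{Kbounded}, Theorem \ref{structP}) and that $\dot{C}^\infty(\bbar{X};{^0\Sigma})$ is dense in $L^2(\bbar{X},{\rm dv}_{\bbar{g}})$.

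Finally, the range. By Corollary \ref{PK=K}, $PK=K$, so the range of $P$ contains the range of $K$, which by Proposition \ref{span} and Proposition \ref{p:harmonic} is (the $L^2$-closure of) $\mc{H}(D_{\bar{g}})$; thus $\bbar{\mc{H}}\subseteq{\rm ran}(P)$. Conversely, ${\rm ran}(P)\subseteq\overline{{\rm ran}(K)}=\bbar{\mc{H}}$ directly from the definition $P=K(K^*K)^{-1}K^*$ together with boundedness of $(K^*K)^{-1}K^*$ on the appropriate spaces (the range of $K^*$ lands in $\bbar{\mc H}_\pl$ by Lemma \ref{Kbounded}, $(K^*K)^{-1}$ preserves this since $(K^*K)^{-1}=B\mc{C}$, and $K$ maps it into $\bbar{\mc H}$). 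Combined with self-adjointness and idempotency, a bounded operator with range $\bbar{\mc H}$ that fixes $\bbar{\mc H}$ pointwise (again $PK=K$) is exactly the orthogonal projector $P_{\bbar{\mc H}}$. I expect the only genuine technical point to be the density/continuity bookkeeping needed to pass the algebraic identities $K^*K=A-\tfrac14(\mathrm{Id}-\mc C)(\mathrm{Id}+D_{h_0}^2)^{-\demi}(\mathrm{Id}-\mc C)$ and $(K^*K)^{-1}K^*K=\mc C$ from smooth sections to $L^2$; everything else is a formal consequence of the composition identities already established, so the corollary is essentially immediate once Theorem \ref{structP} gives boundedness.
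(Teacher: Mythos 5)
Your proof reaches the correct conclusion but takes a more elaborate route than the paper. You verify separately that $P$ is bounded, self-adjoint, idempotent, and has range $\bbar{\mc H}$; the paper instead establishes boundedness by conjugating to $P'=x^{(n+1)/2}Px^{-(n+1)/2}$, whose lifted kernel has index sets $J'_{\ff}\geq 0$ and $J'_{\lb}=J'_{\rb}=\tfrac{n+1}{2}$ as a $0$-half-density so that Mazzeo's Proposition 3.20 of \cite{MaCPDE} (not Theorem 3.25, which is what you cite) applies, and then simply observes that $P$ restricts to the identity on ${\rm ran}(K)\supset \mc{H}$ by Corollary \ref{PK=K} while $P=K(K^*K)^{-1}K^*$ vanishes on $\ker K^*=\bbar{{\rm Im}(K)}^{\perp}=\bbar{\mc H}^\perp$. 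A bounded operator that acts as the identity on a closed subspace and as zero on its orthogonal complement is automatically the orthogonal projection onto that subspace, so the separate checks of $P^*=P$ and $P^2=P$ are redundant. Those extra checks of yours are essentially right in spirit, but note two points. First, there is a computational slip in the idempotency step: from $P^2=K(K^*K)^{-1}K^*K(K^*K)^{-1}K^*$, applying Corollary \ref{KstarKinv} in the form $(K^*K)^{-1}K^*K=\mc{C}$ gives $K\mc{C}(K^*K)^{-1}K^*$, not $K(K^*K)^{-1}\mc{C}(K^*K)^{-1}K^*$ as you wrote; the corrected expression does collapse to $P$ either via $K\mc{C}=K$ (Lemma \ref{Kbounded}) or via $\mc{C}(K^*K)^{-1}=(K^*K)^{-1}$. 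Second, your assertion that $A$ (hence $B=A^{-1}$) commutes with $\mc{C}$ is correct — it follows from $A\mc{C}=\mc{C}A=K^*K$, since the range of $K^*$ lies in $\bbar{\mc H}_\pl$ and $K$ kills $\bbar{\mc H}_\pl^\perp$, while the correction term $({\rm Id}-\mc C)({\rm Id}+D_{h_0}^2)^{-\demi}({\rm Id}-\mc C)$ is annihilated on both sides by $\mc C$ — but this is exactly the sort of verification the paper's route avoids, which is what makes it cleaner.
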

\begin{proof} Let $P':=x^{\frac{n+1}{2}}Px^{-\frac{n+1}{2}}$ acting on $\dot{C}^\infty(\bbar{X};{^0\Sigma})$,
then it suffices to prove that $P'$ extends to a bounded operator on $L^2(X,{\rm dv}_{g})$. But, in terms of half-densities,
the half density $|{\rm dv}_g\otimes {\rm dv}_g|^\demi$ is given by
$(xx')^{-\frac{n+1}{2}}|{\rm dv}_{\bbar{g}}\otimes {\rm dv}_{\bbar{g}}|^\demi$ and
Theorem  \ref{structP} shows that the Schwartz kernel of $P'$ lifts on $\bbar{X}\x_0\bbar{X}$
to a half-density $k_{P'}\beta^*(|{\rm dv}_g\otimes {\rm dv}_g|^\demi)$ where
\begin{align*}k_{P'}\in \mc{A}_{\rm phg}^{J'_{\rm ff},J'_{\rm rb},J'_{\rm lb}}(\bbar{X}\x_0\bbar{X};\mc{E})
&& J'_{\rm ff}\geq 0, && J'_{\rm rb}=J'_{\rm lb}=\frac{n+1}{2}.\end{align*}
It is proved in Proposition 3.20 of Mazzeo \cite{MaCPDE} that such
operators are bounded on $L^2(\bbar{X},{\rm dv}_g)$. To conclude,
we know from Corollary \ref{PK=K} that $P$ is the identity on the
range of $K$ acting on $C^{\infty}(\pl\bbar{X};{^0\Sigma})$, which
coincides with the space of smooth harmonic spinors for $D_{\bar{g}}$
on $\bbar{X}$, and we also know that $P$ vanishes on $\ker
(K^*)=\bbar{{\rm Im}(K)}^{\perp}$, so this achieves the proof.
\end{proof}

\section{Conformally covariant powers of Dirac operators and cobordism invariance of the index}

In this section, we define some conformally covariant differential
operators with leading part given by a power of the Dirac operator. 
The method is the same as in Graham-Zworski
\cite{GRZ}, using our construction of the scattering operator in
Section \ref{AH}. Since this is very similar to the case of
functions dealt with in \cite{GRZ}, we do not give much details. Let
$(X,g)$ be an asymptotically hyperbolic manifold with a metric $g$,
and let $x$ be a geodesic boundary defining function of $\pl\bbar{X}$
so that the metric has a product decomposition of the form
$g=(dx^2+h(x))/x^2$ near $\pl\bbar{X}$ as in \eqref{psig}. 
\begin{lemma}\label{finitemero}
Let $C(\la):=2^{-2\la}\Gamma(1/2-\la)/\Gamma(1/2+\la)$. If the
metric $g$ is even to infinite order, the operator
$\til{S}(\la):=S(\la)/C(\la)$ is finite meromorphic in $\cc$, and
it is holomorphic in $\{\Re(\la)\geq 0\}$. Moreover for
$k\in\nn_0$, the operator $L_k:=\til{S}(1/2+k)$ is a conformally
covariant self-adjoint differential operator on $\pl\bbar{X}$ with
leading part ${\rm cl}(\nu) D_{h_0}^{1+2k}$, and it depends only
on the tensors $\pl_x^{2j}h(0)$ in a natural way for $j\leq k$.
For $k=0$, one has $\til{S}(1/2)={\rm cl}(\nu)D_{h_0}$.
\end{lemma}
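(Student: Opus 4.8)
The plan is to follow Graham--Zworski \cite{GRZ} closely, transporting their scheme for conformally covariant powers of the Laplacian to the Dirac setting via the scattering operator $S(\la)$ from \cite{GMP}. The starting point is the indicial equation \eqref{indicialeq}: the obstruction to improving a formal solution of $(D_g\pm i\la)\sigma=0$ at the order $x^{\ndemi-\la+k}$ is the vanishing of the factors $(k-\la\pm\la)$, $(\la-k\pm\la)$. Just as in the scalar case, the poles of $\sigma_\pm^+(\la)$ — equivalently of $S(\la)$ — occur at the half-integers $\la\in\demi+\nn_0$ (and at the integers $\la\in\nn$, but those are killed by the $\Gamma$-factor in $C(\la)$, or are absent by the evenness hypothesis, compare \cite{GuiDMJ}). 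The first task is thus to show that $\til S(\la)=S(\la)/C(\la)$ is finite meromorphic on $\cc$ and holomorphic on $\{\Re(\la)\ge 0\}$. For the holomorphy on the closed right half-plane: by Proposition \ref{propofS} and the construction \eqref{constsigma}--\eqref{sigmainfty}, $S(\la)$ is already holomorphic on $\{\Re(\la)\ge 0,\ \la\notin\nn/2\}$, and $C(\la)$ has no zeros or poles there except that $\Gamma(1/2-\la)$ has poles precisely at $\la\in\demi+\nn_0$; since $C(\la)^{-1}$ vanishes there, $\til S(\la)$ would a priori acquire zeros rather than poles — so one must instead show $S(\la)$ has \emph{poles} at these points whose residues are differential operators, matching the zeros of $C(\la)^{-1}$. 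This is where the evenness to infinite order is used: it guarantees that the formal solution operators at each step are differential (built from even jets $\pl_x^{2j}h(0)$), so that the Laurent coefficients of $S(\la)$ at $\la=\demi+k$ are differential operators, exactly as in \cite{GRZ}.

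Granting this structural statement, I would next identify $L_k:=\til S(1/2+k)$ explicitly at leading order. The principal symbol of $S(\la)$ is $i2^{-2\la}\frac{\Gamma(1/2-\la)}{\Gamma(1/2+\la)}{\rm cl}(\nu){\rm cl}(\xi)|\xi|^{2\la-1}_{h_0}$ by Proposition \ref{propofS}; dividing by $C(\la)=2^{-2\la}\Gamma(1/2-\la)/\Gamma(1/2+\la)$ gives principal symbol $i\,{\rm cl}(\nu){\rm cl}(\xi)|\xi|^{2\la-1}$ for $\til S(\la)$, hence at $\la=\demi+k$ the symbol $i\,{\rm cl}(\nu){\rm cl}(\xi)|\xi|^{2k}$, which is exactly the symbol of ${\rm cl}(\nu)D_{h_0}^{1+2k}$ (using ${\rm cl}(\xi)^2=-|\xi|^2$ and that $D_{h_0}$ has symbol $i\,{\rm cl}(\xi)$ in the appropriate convention). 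That $\til S(1/2+k)$ is a genuine differential operator (not merely pseudodifferential with that symbol) is part of the finite-meromorphy statement above. The case $k=0$ is cleanest: from Proposition \ref{p:harmonic}, the subleading term of $E(0)\psi$ is $x^{\ndemi}S(0)\psi$, and one can run the indicial recursion one step to see that $S(\la)$ near $\la=\demi$ behaves like $C(\la)\,{\rm cl}(\nu)D_{h_0}$ at leading order; since both sides have the same principal symbol and the residual term is lower order, a direct computation of the order-one term in \eqref{indicialeq} with $k=0$ pins down $\til S(1/2)={\rm cl}(\nu)D_{h_0}$ exactly.

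Conformal covariance of $L_k$ is then a formal consequence of the transformation law \eqref{change}: $\hat S(\la)=e^{-(\ndemi+\la)\omega_0}S(\la)e^{(\ndemi-\la)\omega_0}$, and since $C(\la)$ is a scalar independent of the metric, $\til S(\la)$ obeys the same law, so at $\la=\demi+k$ we get $\hat L_k=e^{-(\ndemi+\demi+k)\omega_0}L_k\,e^{(\ndemi-\demi-k)\omega_0}$, i.e. the expected conformal weight. Self-adjointness for real $\la$ comes from the last sentence of Proposition \ref{propofS} ($S(\la)$ self-adjoint on $(0,\infty)$) together with $C(\la)\in\rr$ there. Naturality — dependence only on $\pl_x^{2j}h(0)$, $j\le k$ — follows because the coefficients produced by the indicial recursion up to order $k$ involve only these jets, again mirroring \cite{GRZ}. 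The main obstacle I anticipate is the finite-meromorphy / differential-operator claim at $\la\in\demi+\nn_0$: one must verify that the residue structure of $S(\la)$ interacts with the poles of $C(\la)$ so as to produce a \emph{differential} operator at $\la=\demi+k$, rather than something merely finite, and this requires the infinite-order evenness of $g$ to rule out logarithmic terms and odd-jet contributions in the asymptotic expansion of $\sigma_\pm(\la)$ — the same phenomenon that in the scalar GJMS case forces the use of the Poincar\'e--Einstein/ambient metric. Everything else is symbol bookkeeping and application of the already-quoted results from \cite{GMP}.
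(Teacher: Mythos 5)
Your proposal follows the same route as the paper: transpose the Graham--Zworski scheme to the Dirac setting via the indicial recursion, identify the poles of $S(\la)$ at $\la\in\demi+\nn_0$ with residues that are differential operators, cancel them against the poles of $C(\la)$, read off the leading part from the principal symbol of Proposition~\ref{propofS}, and deduce conformal covariance from \eqref{change} and self-adjointness from the last clause of Proposition~\ref{propofS}. The only place where the paper is more concrete than your sketch is exactly the point you flag as the "main obstacle": the paper makes the differential-operator claim precise by writing $\sigma_{\infty,\pm}(\la)=\psi+\sum_{j\ge1}x^{j}(p_{j,\la}\psi)+\cdots$ with each $p_{j,\la}$ a \emph{differential} operator built from $(\pl_x^{\ell}h(0))_{\ell\le j}$ such that $p_{j,\la}/\Gamma(1/2-\la)$ is holomorphic, and then, following [GRZ] Prop.~3.5--3.6, identifies $\textrm{Res}_{\la=1/2+k}S(\la)=-\textrm{Res}_{\la=1/2+k}\,p_{2k+1,\la}$; this is what makes $\til S(1/2+k)$ differential rather than merely pseudodifferential. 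Your principal-symbol computation correctly gives the leading term $\cl(\nu)D_{h_0}^{1+2k}$, and your intended direct computation of the order-one term for $k=0$ is exactly the paper's $p_{1,\la}=-\cl(\nu)D_{h_0}/(2\la-1)$ from the indicial equation \eqref{indicialeq} and \eqref{diracAH}.
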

\begin{proof} The first statement is proved in Corollary 4.11 of \cite{GMP}.
The last statement about $\til{S}(1/2+k)$ is a consequence of the
construction of $\sigma_{\pm}(\la)$ in Proposition \ref{poisson},
by copying mutatis mutandis the proof of Theorem 1 of
Graham-Zworski \cite{GRZ}. Indeed, by construction, the term
$\sigma_{\infty,\pm}$ satisfying \eqref{sigmainfty} has a Taylor
expansion at $x=0$ of the form
\[\sigma_{\infty,\pm}(\la)=\psi+\sum_{j=1}^{k}x^{j}(p_{j,\la}\psi)+O(x^{k+1})\]
for all $k\in\nn$ where $p_{j,\la}$ are differential operators
acting on $C^{\infty}(\pl\bbar{X},{^0\Sigma})$ such that
$\frac{p_{j,\la}}{\Gamma(1/2-\la)}$ are holomorphic in
$\{\Re(\la)\geq 0\}$ and depend in a natural way only on the
tensors $(\pl_x^{\ell}h(0))_{\ell\leq j}$. Following Proposition
3.5 and Proposition 3.6 in \cite{GRZ}, the operator
$\textrm{Res}_{\la=1/2+k}S(\la)$ is also equal to $-{\rm
Res}_{\la=1/2+k}(p_{2k+1,\la})$. The computation of $\til{S}(1/2)$
is then rather straightforward by checking that
\[p_{1,\la}=-\frac{{\rm cl}(\nu)D_{h_0}}{2\la-1}\]
using the indicial equation \eqref{indicialeq} and the decomposition
\eqref{diracAH}.
\end{proof}

A first corollary of Lemma \ref{finitemero} is
the cobordism invariance of the index of the Dirac operator.
\begin{cor}
Let $D_{h_0}$ be the Dirac operator on a $2k$-dimensional closed
spin manifold $(M,h_0)$ which is the oriented boundary of a compact manifold
with boundary $(\bbar{X},\bbar{g})$. Let $D_{h_0}^+$ be the
restriction of $D_{h_0}$ to the sub-bundle of positive spinors
$\Sigma^+:=\ker(\omega-1)$, where $\omega$ 
is the Clifford multiplication by the volume element when $k$ is even, 
respectively $\omega=i\cl(\mathrm{vol}_{h_0})$ for $k$ odd. Then ${\rm
Ind}(D_{h_0}^+)=0$.
\end{cor}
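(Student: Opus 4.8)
The plan is to deduce cobordism invariance of the index from the invertibility properties of the scattering operator $S(\lambda)$, specializing the machinery at $\lambda = 1/2$. Recall from Lemma \ref{finitemero} that $\til{S}(1/2) = S(1/2)/C(1/2)$ is the first-order conformally covariant operator $\cl(\nu) D_{h_0}$, and from Proposition \ref{propofS} that the inverse of $S(\lambda)$ is $S(-\lambda)$ (meromorphically continued), while $S(\lambda)$ is self-adjoint for $\lambda \in (0,\infty)$. In particular $S(1/2)$ has an inverse as a pseudo-differential operator, up to the possible poles/zeros. The key observation is that $\cl(\nu)$ interchanges the $\pm i$-eigenspaces ${^0\Sigma}_\pm$, so writing $A = \cl(\nu) D_{h_0}$ in block form with respect to $\Sigma^+ \oplus \Sigma^-$ (using that $\omega$ and $\cl(\nu)$ are related to the splitting into half-spinors), the operator $A$ is off-diagonal: $A = \begin{pmatrix} 0 & A_- \\ A_+ & 0 \end{pmatrix}$ where $A_\pm$ is essentially $D_{h_0}^\pm$ up to an isomorphism given by Clifford multiplication.

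First I would make precise the relationship between the $\pm i$-eigenspace splitting ${^0\Sigma} = {^0\Sigma}_+ \oplus {^0\Sigma}_-$ of $\cl(\nu)$ and the half-spinor splitting $\Sigma = \Sigma^+ \oplus \Sigma^-$ determined by the chirality element $\omega$; on the even-dimensional boundary $M$ these are genuinely different gradings, and $\cl(\nu)$ anticommutes with $\omega$, so $\cl(\nu)$ maps $\Sigma^\pm$ to $\Sigma^\mp$. Next I would observe that $D_{h_0}$ anticommutes with $\omega$ (the Dirac operator is odd), hence $D_{h_0}^+ : \Sigma^+ \to \Sigma^-$ and its adjoint $D_{h_0}^- = (D_{h_0}^+)^*: \Sigma^- \to \Sigma^+$, and $\mathrm{Ind}(D_{h_0}^+) = \dim\ker D_{h_0}^+ - \dim\ker D_{h_0}^-$. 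Now from the scattering picture: $S(1/2)$, being invertible (its inverse $S(-1/2)$ exists by Proposition \ref{propofS}, and one checks $1/2$ is not a pole using evenness to infinite order — or works in the conformal class given by an appropriate bdf), implies $\til{S}(1/2) = \cl(\nu) D_{h_0}$ is invertible. But $\cl(\nu)$ is a bundle isomorphism $\Sigma^\pm \to \Sigma^\mp$, so $\cl(\nu) D_{h_0}$ invertible forces $D_{h_0}$ invertible, whence $\ker D_{h_0}^+ = \ker D_{h_0}^- = 0$ and $\mathrm{Ind}(D_{h_0}^+) = 0$.

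The main obstacle is handling the poles of $S(\lambda)$ at $\lambda = 1/2$: $\til{S}(1/2) = \cl(\nu) D_{h_0}$ is a differential operator, which is generically \emph{not} invertible (it has a kernel precisely the harmonic spinors on $M$), so the argument cannot literally be "$S(1/2)$ is invertible." The correct statement is rather that $\til{S}(\lambda)$ is holomorphic and invertible for $\Re(\lambda) > 0$ away from a discrete set, and one must argue that the $L^2$-cohomology it computes is trivial. The clean route, which I would adopt: since $\til{S}(\lambda)^{-1}$ continues meromorphically and $\til{S}(\lambda)\til{S}(-\lambda) = \mathrm{Id}$ at the level of symbols and generically as operators, the index (which is locally constant and computable from the symbol, or from $\dim\ker - \dim\mathrm{coker}$ of the family near $\lambda = 1/2$) of the off-diagonal block must vanish — because $\til{S}(\lambda)$ and its inverse are both order-$\pm 2\lambda$ pseudodifferential operators realizing each other's parametrices, and on a closed manifold an elliptic operator admitting a genuine two-sided pseudodifferential inverse on a dense domain has index zero. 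Concretely, the index of $D_{h_0}^+$ equals the index of $\cl(\nu)D_{h_0}$ restricted to $\Sigma^+ \to \Sigma^-$, which equals minus the index of the other block by self-adjointness of $\til{S}(1/2)$ (when it makes sense) or by the symmetry $\til{S}(\lambda)^* \sim \til{S}(\bar\lambda)$; combined with the fact that the total operator $\til{S}(1/2)$, being a boundary value of an invertible holomorphic family, forces these indices to be equal, one gets $2\,\mathrm{Ind}(D_{h_0}^+) = 0$. I would present this by reducing to the statement that the family $\til{S}(\lambda)$ on the two half-spinor bundles is a "parametrix-invertible" family through $\lambda = 1/2$, invoke that $\lambda \mapsto S(\lambda)$ is holomorphic and invertible on $\{\Re \lambda > 0\}$ minus a discrete set with $S(\lambda)^{-1} = S(-\lambda)$, and conclude that the block decomposition of $\cl(\nu) D_{h_0}$ has vanishing index.
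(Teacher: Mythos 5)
Your overall route is the paper's: identify $\til{S}(1/2)=\cl(\nu)D_{h_0}$, observe that it is off-diagonal in the chirality grading, and use that $\til{S}(\lambda)$ is a holomorphic family of elliptic pseudodifferential operators invertible off a discrete set, together with homotopy invariance of the index. There is, however, a genuine algebraic error in the way you relate the two gradings, and it is inconsistent with your own block form of $\cl(\nu)D_{h_0}$.

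You assert that the $\pm i$-eigenspace grading ${^0\Sigma}_\pm$ of $\cl(\nu)$ and the chirality grading $\Sigma^\pm$ of $\omega$ are ``genuinely different'' and that $\cl(\nu)$ anticommutes with $\omega$, so that $\cl(\nu)$ maps $\Sigma^\pm$ to $\Sigma^\mp$. This is false. Since $\omega$ is (a normalization of) Clifford multiplication by the volume element of $M$, it is a product of an \emph{even} number of tangential Clifford factors, so $\cl(\nu)$ \emph{commutes} with $\omega$; in the odd-dimensional spin representation used by the paper one has $\cl(\nu)=i\omega$, hence ${^0\Sigma}_\pm=\Sigma^\pm$ — the two gradings coincide. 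Note moreover that if both $\cl(\nu)$ and $D_{h_0}$ swapped $\Sigma^\pm$ as you claim, then $\cl(\nu)D_{h_0}$ would \emph{preserve} the grading (be block-diagonal), contradicting the very block form $\begin{pmatrix}0 & A_-\\ A_+ & 0\end{pmatrix}$ you wrote down. The correct picture: $\cl(\nu)$ acts as the scalar $\pm i$ on $\Sigma^\pm$, $D_{h_0}$ anticommutes with $\omega$ and swaps $\Sigma^\pm$, and therefore $\til{S}(1/2)=\cl(\nu)D_{h_0}$ is off-diagonal with $\Sigma^+\to\Sigma^-$ block equal to $\mp i D_{h_0}^+$.

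With the grading repaired, the conclusion is immediate and your closing paragraph is an unnecessary detour. The operator $\til{S}(\lambda)$ is off-diagonal for the same grading for all $\lambda$ (since $S_\pm(\lambda):{^0\Sigma}_\pm\to{^0\Sigma}_\mp$ by construction), so $\til{S}(\lambda)|_{\Sigma^+\to\Sigma^-}$ is a holomorphic family of elliptic pseudodifferential operators of order $2\lambda$ which, by Lemma~\ref{finitemero} and Proposition~\ref{propofS}, is invertible outside a discrete set; by homotopy invariance its index vanishes identically, and setting $\lambda=1/2$ gives $\mathrm{Ind}(D_{h_0}^+)=0$. The indirect step ``index of one block $=-$ index of the other by self-adjointness, and also they are equal, hence $2\,\mathrm{Ind}(D_{h_0}^+)=0$'' is not needed, and the only reason the two indices would be equal is that both already vanish, which makes that part of the argument circular.
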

\begin{proof}
By topological reasons, we may assume that $\bbar{X}$ is also spin and 
that the spin structure on $M$ is induced from that on $\bbar{X}$. 
Using the isomorphism between the usual spin bundle
$\Sigma(X)$ and the 0-spin bundle
${^0\Sigma}(X)$ in Section \ref{AH}, we see that $D_{h_0}$ can be
considered as acting in the restriction of the 0-spin bundle ${^0\Sigma}$ to $M$. Since
the odd-dimensional spin representation is chosen such that $\cl(\nu)=i\omega$, the $\pm i$
eigenspaces of ${\rm cl}(\nu)$ on ${^0\Sigma}(X)|_{M}$ correspond to the splitting
in positive, respectively negative spinors defined by
$\omega$ on $\Sigma(M)$. We have seen that $\til{S}(1/2)={\rm
cl}(\nu)D_{h_0}$. Then by the homotopy invariance of the index, 
it suffices to use the fact that $\til{S}(\la)$ is
invertible for all $\la$ except in a discrete set of $\cc$, which follows from
Lemma \ref{finitemero} and Proposition \ref{propofS}.
\end{proof}

We refer for instance to \cite{AS, Moro,Lesch,Nico,Brav}
for other proofs of the cobordism invariance of the index of $D^+$.

Now, let us consider $M$ a compact manifold equipped with a
conformal class $[h_0]$.  A $(n+1)$-dimensional
\emph{Poincar\'e-Einstein} manifold $(X,g)$ associated to
$(M,[h_0])$ is an asymptotically hyperbolic manifold with
conformal infinity $(M,[h_0])$ and such that the following extra condition
holds near the boundary $M=\pl\bbar{X}$
\begin{align*}{\rm Ric}(g)=-ng+O(x^{N-2}),&& N= \begin{cases}
\infty & \textrm{ if }n+1\textrm{ is even},\\
n & \textrm{ if }n+1{\textrm{ is odd.}} \end{cases}
\end{align*}
Notice that by considering the disjoint union $M_2:=M\sqcup M$ instead of $M$,
one sees  that either $M$ or $M_2$ can be realized as the boundary of a compact manifold with boundary
$\bbar{X}$.

Fefferman and Graham \cite{FGR,FGR2} proved that for any $(M,[h_0])$
which is the boundary of a compact manifold $\bbar{X}$, there exist
Poincar\'e-Einstein manifolds associated to $(M,[h_0])$. Moreover
writing $g=(dx^2+h(x))/x^2$ for a geodesic boundary defining function $x$,
the Taylor expansion of the metric $h(x)$ at $M=\{x=0\}$
is uniquely locally (and in a natural way) determined by $h_0=h(0)$ and the covariant derivatives
of the curvature tensor of $h_0$, but not on the Poincar\'e-Einstein metrics associated to $(M,[h_0])$.
If $M$ is spin, we can always construct a Poincar\'e-Einstein $(X:=[0,1]\x M,g)$ associated to $M_2$
with a spin structure induced naturally by that of $M$.

\begin{cor}\label{corcovdif}
If $(X,g)$ is a spin Poincar\'e-Einstein manifold associated to a
spin conformal manifold $(M,[h_0])$, then for $k\leq
N/2$ the operators $L_k=:-\cl(\nu)\til{S}(1/2+k)$ acting on
$C^{\infty}(M,{^0\Sigma})$ are self-adjoint natural (with respect
to $h_0$), conformally covariant differential operators of the
form $L_k=D_{h_0}^{2k+1}+\textrm{ lower order terms}$.
\end{cor}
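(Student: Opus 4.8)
The plan is to combine Lemma~\ref{finitemero} with the existence and formal-jet uniqueness theorem of Fefferman--Graham \cite{FGR,FGR2}, essentially transcribing the function case of Graham--Zworski \cite{GRZ}.

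First I would note that the hypothesis $k\leq N/2$ is precisely what allows the construction of $\til{S}(1/2+k)$ to go through and to retain all its good properties. Writing a Poincar\'e--Einstein metric in geodesic normal form $g=(dx^2+h(x))/x^2$, the Einstein condition $\Ric(g)=-ng+O(x^{N-2})$ forces $g$ to be even to order $2k+1$ in the sense of Section~\ref{AH} whenever $k\leq N/2$, since then all the odd jets $\pl_x^{2j+1}h(0)$, $j<k$, vanish. The conclusions of Lemma~\ref{finitemero} concerning a single operator $\til{S}(1/2+k)$ only involve the recursion up to the indicial root $2k+1$, hence only the jet $(\pl_x^{j}h(0))_{j\leq 2k}$, so they apply verbatim under this evenness assumption and yield that $\til{S}(1/2+k)$ is a self-adjoint differential operator on $M$ with leading part $\cl(\nu)D_{h_0}^{1+2k}$, depending only on the even tensors $\pl_x^{2j}h(0)$, $j\leq k$, in a natural way. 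Using $\cl(\nu)^2=-\mathrm{Id}$, the skew-adjointness of $\cl(\nu)$, and the anticommutation $\cl(\nu)\til{S}(1/2+k)=-\til{S}(1/2+k)\cl(\nu)$ recorded after Theorem~\ref{proj2}, it then follows immediately that $L_k=-\cl(\nu)\til{S}(1/2+k)$ is a self-adjoint differential operator with leading part $D_{h_0}^{2k+1}$.

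Next I would invoke Fefferman--Graham: a Poincar\'e--Einstein filling of $(M,[h_0])$ exists (after doubling $M$ if needed, as already observed), and the even Taylor coefficients $\pl_x^{2j}h(0)$ with $2j\leq N$ are given by universal natural expressions, polynomial in $h_0^{-1}$ and the covariant derivatives of the curvature of $h_0$; in particular they depend only on $h_0$ and not on the chosen filling. Substituting into the previous step shows that for $k\leq N/2$ the operator $L_k$ is a differential operator natural with respect to $h_0$ and independent of the choice of Poincar\'e--Einstein metric.

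For the conformal covariance I would use the transformation law \eqref{change}: a change of conformal representative $\hat{h}_0=e^{2\omega_0}h_0$ is realized by a change of geodesic boundary defining function $\hat{x}=e^{\omega}x$ with $\omega|_{x=0}=\omega_0$, and since $C(\la)$ is a universal scalar, $\til{S}(\la)$ satisfies $\hat{\til{S}}(\la)=e^{-(\ndemi+\la)\omega_0}\til{S}(\la)e^{(\ndemi-\la)\omega_0}$. Setting $\la=1/2+k$ and keeping track of how the factor $\cl(\nu)$ and the canonical identification of ${^0\Sigma}$ transform under the conformal rescaling of the boundary metric then gives
\[\hat{L}_k=e^{-\frac{n+2k+1}{2}\omega_0}\,L_k\,e^{\frac{n-2k-1}{2}\omega_0},\]
which for $k=1$ is the weight appearing in Corollary~\ref{corL1} and for $k=0$ recovers the conformal covariance of the Dirac operator $D_{h_0}$. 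The step I expect to be the main obstacle is this last one: carefully transferring the $\cl(\nu)$ factor and the spinor-bundle identification across the conformal change of $h_0$, so that the exponents $\tfrac{n\mp(2k+1)}{2}$ come out correctly; the rest is a direct adaptation of \cite{GRZ} together with the appeal to \cite{FGR,FGR2}.
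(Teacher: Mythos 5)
Your proposal is correct and follows the same route the paper implicitly takes: the corollary is stated in the paper without a separate proof because it is read off directly from Lemma~\ref{finitemero} together with the Fefferman--Graham naturality of the even jet $(\partial_x^{2j}h(0))_{j\le k}$ of a Poincar\'e--Einstein filling, and you have simply spelled out that deduction (including the observation that for $n$ even one only has evenness to finite order, which is why $k\le N/2$ is required, and the routine passage from the lemma's $\til S(1/2+k)$ with leading part $\cl(\nu)D_{h_0}^{2k+1}$ to $L_k=-\cl(\nu)\til S(1/2+k)$ with leading part $D_{h_0}^{2k+1}$ via $\cl(\nu)^2=-\mathrm{Id}$ and the anticommutation of $\cl(\nu)$ with $S(\la)$).
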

Hence we can then always define the operators $L_k$ on $M_2=M\sqcup M$ 
and thus, since the construction is local and natural with respect to
$h_0$, this defines naturally $L_k$ on any $M$. As above, when
$(M,[h_0])$ is a boundary, the index of the restriction $L^\pm_k$
to ${^0\Sigma}_\pm=\ker(\omega\mp 1)$  (when $n$ is even) is always $0$. In general, the index
of $L_k^\pm$ is the index of $L^\pm_0$, which equals the $\hat{A}$-genus of $M$ by the
Atiyah-Singer index theorem \cite{AS}.

\subsection{The Dirac operator}
For $k=0$, the operator $L_0$, which is essentially the pole of
the scattering matrix at $\lambda=1/2$, is just the Dirac operator
$D_{h_0}$ on $(M,h_0)$ when the dimension of $M$ is even,
respectively two copies of $D_{h_0}$ when $\dim(M)$ is odd. 

\subsection{A conformally covariant operator of order $3$}
For $k=1$ in Corollary \ref{corcovdif} we get a conformally covariant
operator of order $3$ on any spin manifold of dimension $n\geq 3$,
with the same principal symbol as $D_{h_0}^3$.
\begin{theorem}
Let $(M,h_0)$ be a Riemannian spin manifold of dimension $n\geq 3$.
Then the differential operator of order $3$ acting on spinors
\[L_1:=D_{h_0}^3 -\frac{2\cl\circ\Ric_{h_0}\circ\nabla^{h_0}}{n-2}
+\frac{\scal_{h_0}}{(n-1)(n-2)}D_{h_0}-\frac{\cl(d(\scal_{h_0}))}{2(n-1)}\] 
is conformally covariant with respect to $h_0$ in the following sense:
if $\omega \in C^\infty(M)$  and $\hat{h}_0=e^{2\omega}h_0$, then 
\[\hat{L}_1=e^{-\frac{n+3}{2}\omega}L_1e^{\frac{n-3}{2}\omega}\]
where $\hat{L}_1$ is  defined as above but using the metric $\hat{h}_0$ instead of $h_0$.
\end{theorem}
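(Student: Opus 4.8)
The conformal covariance of $L_1$ with the stated weights, and the fact that its top-order part is $D_{h_0}^3$, are already provided by Corollary \ref{corcovdif} (equivalently Theorem \ref{th2}) applied with $k=1$, via the identity $L_1=-\cl(\nu)\til{S}(3/2)$; the precise weights $-\frac{n+3}{2}$ and $\frac{n-3}{2}$ are read off from the change law \eqref{change} at $\la=3/2$, where $\ndemi+\la=\frac{n+3}{2}$ and $\ndemi-\la=\frac{n-3}{2}$. So the only new content is the explicit formula for the lower-order terms, and the plan is to obtain it by the Graham-Zworski method \cite{GRZ}: compute the formal expansion at the boundary of the approximate solution $\sigma_{\infty,+}(\la)$ of \eqref{sigmainfty} for a Poincar\'e-Einstein metric, and extract the residue of $S(\la)$ at $\la=3/2$.

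Concretely, I would realize $(M,h_0)$ --- or $M\sqcup M$, which is harmless since the resulting operator is local and natural in $h_0$ --- as the conformal infinity of a spin Poincar\'e-Einstein manifold $(X,g)$ by \cite{FGR,FGR2}, and write $g=(dx^2+h(x))/x^2$ for a geodesic boundary defining function $x$. The Einstein condition forces the Fefferman-Graham expansion $h(x)=h_0-x^2 P_{h_0}+O(x^4)$, where $P_{h_0}=\frac{1}{n-2}\bigl(\Ric_{h_0}-\frac{\scal_{h_0}}{2(n-1)}h_0\bigr)$ is the Schouten tensor; in particular $\tr_{h_0}(\px^2h(0))=-\scal_{h_0}/(n-1)$. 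Next I would expand $D_g$ near $M$ to order $x^3$. Using the conformal covariance $D_g=x^{\ndemi+1}D_{\bbar g}x^{-\ndemi}$ with $\bbar g:=x^2g$ (which near $M$ equals $dx^2+h(x)$), the operator $D_{\bbar g}$ splits as $\cl(\nu)\bigl(\px+\tfrac12 H(x)\bigr)+D_{h(x)}$, where $H(x)$ is the mean curvature of the slice $\{x=\mathrm{const}\}$ and $D_{h(x)}$ is the Dirac operator of $(M,h(x))$ transported along the normal geodesics. One then Taylor-expands $H(x)$, whose $x^1$-coefficient is $\tfrac12\tr_{h_0}(\px^2h(0))$, and $D_{h(x)}$, whose $x^2$-coefficient is a first-order operator linear in $\px^2h(0)=-2P_{h_0}$ given by the standard formula for the first variation of the Dirac operator under a change of metric. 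After inserting the Einstein relation, these are precisely the ingredients producing the terms $\cl\circ\Ric_{h_0}\circ\nabla$, $\scal_{h_0}D_{h_0}$ and $\cl(d\scal_{h_0})$.

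With this expansion in hand, I would solve the recursion. Writing $\sigma_{\infty,+}(\la)=x^{\ndemi-\la}\bigl(\psi+x\,p_{1,\la}\psi+x^2p_{2,\la}\psi+x^3p_{3,\la}\psi\bigr)+O(x^{\ndemi-\la+4})$ and imposing $(D_g+i\la)\sigma_{\infty,+}(\la)\in\dot C^\infty(\bbar{X},{^0\Sigma})$, the indicial equation \eqref{indicialeq} determines $p_{1,\la},p_{2,\la},p_{3,\la}$ successively as differential operators on $C^\infty(M,{^0\Sigma})$ with coefficients rational in $\la$. One recovers $p_{1,\la}=-\cl(\nu)D_{h_0}/(2\la-1)$ as in Lemma \ref{finitemero}, while the ${^0\Sigma}_\pm$-decomposition produces denominators that make $p_{3,\la}$ acquire a simple pole at $\la=3/2$ (where the indicial root $\ndemi-\la+3$ collides with $\ndemi+\la$). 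By the argument in the proof of Lemma \ref{finitemero} (Propositions 3.5 and 3.6 of \cite{GRZ} adapted to spinors), $\mathrm{Res}_{\la=3/2}S(\la)=-\mathrm{Res}_{\la=3/2}p_{3,\la}$; since $\mathrm{Res}_{\la=3/2}C(\la)=\tfrac18$ and $\til S=S/C$, this yields $L_1=-\cl(\nu)\til{S}(3/2)=8\,\cl(\nu)\,\mathrm{Res}_{\la=3/2}p_{3,\la}$. Finally I would simplify the resulting Clifford-algebraic expression, using $\sum_j\cl(e_j)\Ric_{h_0}(e_j,\cdot\,)=\cl\circ\Ric_{h_0}$, $\cl(\nu)^2=-\mathrm{Id}$, $\cl(\nu)D_{h_0}=-D_{h_0}\cl(\nu)$ and the Lichnerowicz formula $D_{h_0}^2=\nabla^*\nabla+\tfrac14\scal_{h_0}$, to arrive at the stated formula.

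The main obstacle is the third-order tensorial bookkeeping. One must track, simultaneously and consistently: the $x^2$-coefficient of $D_{h(x)}$, which involves not merely the change of principal symbol but also the variation of the spin connection, hence genuine curvature terms; the contributions of $H(x)$ and of the conjugation $x^{\ndemi+1}(\cdot)x^{-\ndemi}$; and the $\la$-dependent denominators produced by the successive indicial roots, which must conspire to leave a genuine simple pole at $\la=3/2$ with exactly the residue above (and no spurious pole at $\la=1/2$ or $\la=5/2$ entering the computation). As an independent check --- which we also carry out, and which Andrei Moroianu verified through a different argument --- one can evaluate the resulting operator on Einstein metrics and on the round sphere, where $L_1$ must reduce to an explicit polynomial in $D_{h_0}$, thereby pinning down the coefficients of the $\scal_{h_0}$- and $\Ric_{h_0}$-terms.
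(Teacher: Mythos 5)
Your strategy is the same as the paper's: realize $(M,h_0)$ as the conformal infinity of a spin Poincar\'e--Einstein manifold, use the Fefferman--Graham expansion $h(x)=h_0-x^2P+O(x^4)$, and extract $L_1$ from the residue of the renormalized scattering operator at $\la=3/2$ via the Graham--Zworski formal-solution recursion. The relation $\til S(3/2)=8\,\mathrm{Res}_{\la=3/2}S(\la)=-8\,\mathrm{Res}_{\la=3/2}p_{3,\la}$ and the identification of the residue of $C(\la)$ as $1/8$ are both correct, and the change-of-metric exponents $\pm\tfrac{n\pm3}{2}$ do follow from \eqref{change} at $\la=3/2$ as you say.

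Where you diverge is in how you propose to Taylor-expand $D_{\bar g}$ near $x=0$. The paper picks an explicit orthonormal frame $(\px,\tU_1,\dots,\tU_n)$ parallel in $x$ modulo $O(x^3)$, trivializes the spinor bundle along $\px$, and computes $D_0,D_1,D_2$ directly from the Koszul formula and the local Dirac-operator formula, which produces the key algebraic identity $L_1=D_0^3+2\cl(\nu)(D_1D_0+D_0D_1)-4D_2$ and then evaluates $D_1,D_2$. You instead propose the hypersurface decomposition $D_{\bar g}=\cl(\nu)(\px+\tfrac12 H(x))+D_{h(x)}$ and want to expand $H(x)$ and $D_{h(x)}$, invoking the Bourguignon--Gauduchon first-variation formula for the $x^2$-term of $D_{h(x)}$. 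This is a legitimate alternative bookkeeping, but be aware it trades one set of subtleties for another: the factor multiplying the mean-curvature term and even the sign of $H$ depend on conventions, and more importantly the identification of the spinor bundles of $(M,h(x))$ for varying $x$ (hence what ``$D_{h(x)}$'' means as an $x$-family of operators on a fixed bundle) is exactly the trivialization the paper handles explicitly with parallel transport along $\px$; if you merely say ``transported along normal geodesics'' you have not yet written down the terms of the variation formula that encode the change of spin connection, which is precisely where the $\cl\circ\Ric\circ\nabla$ and $\cl(d\scal)$ terms come from. The paper also finds that one apparent contribution to $D_2$ cancels identically because of the structure of the Schouten tensor and the contracted second Bianchi identity; that cancellation will reappear in your version and must be checked, not assumed.

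The main gap, then, is that you have a correct frame but not a proof: you describe the obstacle (``third-order tensorial bookkeeping'') without doing it, and that bookkeeping is the entire nontrivial content of this theorem. In particular you should either derive the analogue of $L_1=D_0^3+2\cl(\nu)(D_1D_0+D_0D_1)-4D_2$ in your parametrization, or compute $\mathrm{Res}_{\la=3/2}p_{3,\la}$ directly from the indicial recursion, and then actually reduce the resulting expression to the stated formula (for which, incidentally, the Lichnerowicz formula you cite is not needed; what is used is the Leibniz rule $D_{h_0}\circ(\scal\cdot)=\scal D_{h_0}+\cl(d\scal)$, the anticommutation $\cl(\nu)D_{h_0}=-D_{h_0}\cl(\nu)$, and the Bianchi identity $\delta^\nabla\Ric_{h_0}=-\tfrac12 d\scal_{h_0}$).
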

\begin{proof}
The existence of the operator $L_1$ with the above covariance property
is already established, we are now going
to compute it explicitly. The asymptotic expansion 
of the Poincar\'e-Einstein metric $g=x^{-2}(dx^2+h_x)$ at the boundary is given in \cite{FGR2} by
\begin{align*}
\bbar{g}=x^2 g=dx^2+h_0-x^2P+O(x^4),&&
P=\tfrac{1}{n-2}\left(\Ric_{h_0}-\tfrac{\scal_{h_0}}{2(n-1)}\right).
\end{align*}
We trivialize the spinor bundle on $(\bbar{X},\bbar{g})$ from the boundary
using parallel transport along the gradient vector field $X:=\px$. Let us write the
limited Taylor
series of $D_{\bar{g}}$ in this trivialization:
\begin{equation}\label{bard}
D_{\bar{g}}=\cl(\nu)\px+D_0+xD_1+x^2D_2+O(x^3).
\end{equation}
Use the conformal change formula 
\begin{equation}\label{ccf}
D_g=x^{\frac{n+2}{2}}D_{\bar{g}}x^{-\frac{n}{2}}
\end{equation}
valid in dimension $n+1$. The idea from \cite{GRZ}
is to use the formal computation giving the residue of the scattering operator
at $\lambda=\frac32$ in terms of the $x^{\ndemi+3}\log(x)$ coefficient in the asymptotic expansion of formal solution to 
$(D_g-\frac{3}{2}i)\omega=0$ (the same method has been used in \cite{AuGu} for forms): there is a unique solution 
$\omega$ modulo $O(x^{\ndemi+3})$ of $(D_g-\frac{3}{2}i)\omega=O(x^{\ndemi+3})$ of the form
\begin{equation}\label{ansatz}
\omega=x^{\frac{n}{2}}\left(x^{-\frac{3}{2}}\omega^-_0+\sum_{j=1}^2
x^{j-\frac{3}{2}} \omega^\pm_j + x^{\frac{3}{2}} \log x \cdot \nu^+\right)+O(x^{\ndemi+3})
\end{equation}
and $\nu^+= C_k {\rm Res}_{\la=3/2}S(\la)\omega_0^-= C'_k \cl(\nu)L_1(\omega_0^-)$ for some non-zero constants $C_k,C'_k$. 
Since we know the principal term of $L_1$ is $D_{h_0}^3$, we can renormalize later and 
the constant $C'_k$ is irrelevant in the computation.
Recall that spinors in
the $\pm i$ eigenspaces of $\cl(\nu)$ are denoted with a $\pm$ symbol.

\begin{lemma}
The conformally covariant operator of order $3$ from Corollary
\ref{corcovdif} is given on by \begin{equation}\label{forml1}
L_1= D_0^3 +2\cl(\nu)(D_1D_0+D_0D_1)-4D_2.
\end{equation}
\end{lemma}
\begin{proof}
From \eqref{bard}, \eqref{ccf} and \eqref{ansatz} we derive by a
straightforward computation the identity \eqref{forml1} on negative spinors.
The same formula is obtained when we start with $\omega_0^+$,
so the lemma is proved.
\end{proof}

\begin{lemma}
The operators $D_1, D_2$ are given by
\begin{align*}
D_1=&\ -\frac{\scal_{h_0}\cl(\nu)}{4(n-1)},\\
-4D_2=&\ -2\cl\circ P\circ\nabla =-\tfrac{2}{n-2}\sum_{i,j=1}^n
\cl_i \Ric_{h_0;ij}\nabla_j +\frac{2\scal_{h_0} D_0}{2(n-1)(n-2)}.
\end{align*}
\end{lemma}
\begin{proof}
We write $\langle U,V\rangle$ for the scalar product with respect to the
$\bbar{g}$ metric, and $\nabla$ for the Riemannian connection. Notice that
for $U,V$ vectors tangent to the $\{x=x_0\}$ slices, and for $A$ defined by
the identity $P(U,V)=h_0(AU,V)$, we have
\begin{align*}
\langle U,V\rangle=h_0(U-x^2 AU,V)+O(x^4).
\end{align*}
Let $U,V$ be local vector fields on $M$. We first
extend them to be constant in the $x$ direction with respect to the
product structure $(0,\epsilon)_x\times M$. Then
\[\langle\nabla_X U,V\rangle=-xh_0(AU,V)+O(x^3)\]
which implies that the vector field
\[\tilde{U}:=U+\frac{x^2}{2} AU\]
is parallel with respect to $X$ modulo $O(x^3)$.
Let $(U_j)_{1\leq j\leq n}$ be a local orthonormal frame on $M$.
Then $(X,\tU_1,\ldots,\tU_n)$ is an orthonormal
frame on $(0,\eps)\x M$ up to order $O(x^4)$ and parallel with respect to $X$ to order $O(x^3)$. 
To compute the Dirac operator of $\bbar{g}$,
we use the trivialization of the spinor bundle ``from the boundary'' given by
the Gram-Schmidt orthonormalisation of this frame with respect to $\bar{g}$, which introduces an extra error term of order $O(x^4)$ 
(therefore harmless). Notice that
\[[\tU,\tilde{V}]=\widetilde{[U,V]}
-\frac{x^2}{2}\left(A[U,V]-[U,AV]-[AU,V]\right).\] 
Then we
compute from the Koszul formula
\begin{align*}
\nabla_X\tU_j=O(x^3),&&\nabla_X X=0,&& \nabla_{\tU_j}X=-xA\tU_j+O(x^3),
\end{align*}
\[\begin{split}
2\langle\nabla_{\tU_j}\tU_i,\tU_k\rangle=2h_0(\nabla^{h_0}_{U_j}U_i,U_k)
-\frac{x^2}{2}&\left\{h_0(A[U_j,U_i]-[U_j,AU_i]-[AU_j,U_i],U_k) \right.\\
&+h_0(A[U_k,U_j]-[U_k,AU_j]-[AU_k,U_j],U_i)\\
&\left. +h_0(A[U_k,U_i]-[U_k,AU_i]-[AU_k,U_i],U_j) \right\}+O(x^3).
\end{split}\]
We continue the computation at a point $p$ assuming that the frame $U_j$
is radially parallel from $p$, in particular at $p$ we have
$(\nabla^{h_0}_{U_j}U_i)(p)=0$, $[U_j,U_i](p)=0$ and $U_j(p)=\partial_j$
i.e., at $p$ the vector fields $U_j$ are just the coordinate vectors
of the geodesic normal coordinates. Then the coefficient of $\frac{x^2}{2}$ in
$2\langle\nabla_{\tU_j}\tU_i,\tU_k\rangle$ simplifies a lot, and we get at $p$
\[
2\langle\nabla_{\tU_j}\tU_i,\tU_k\rangle=2h_0(\nabla^{h_0}_{U_j}U_i,U_k)
-x^2(\partial_iA_{kj}-\partial_k A_{ij})+O(x^3).\]
From the local formula for the Dirac operator \cite[Eq 3.13]{BGV} we obtain
\begin{align*}D_{\bar{g}}=& \cl(X)\px+\cl_j(U_j+\frac{x^2}{2}AU_j) -\frac{1}{2}
\sum_{j,k=1}^n xA_{jk}\cl_j\cl(X)\cl_k
+\frac{1}{2} \sum_{i<k}h_0(\nabla^{h_0}_{U_j}U_i,U_k)\cl_j\cl_i\cl_k\\
&-\frac{x^2}{4} \sum_{j=1}^n\sum_{i<k}(\partial_iA_{kj}-\partial_k A_{ij})\cl_j\cl_i\cl_k+ O(x^3).
\end{align*}
It follows that $D_0$ is just the Dirac operator for $h_0$. For $D_1$, we could
additionally assume that at $p$, the vectors $U_j$ are eigenvectors of $A$,
thus $D_1=\frac{1}{2}\tr_{h_0}(A)\cl(X)$ which in view of the definition of $P$
(recall that $A$ is the transformation corresponding to $P$ with respect
to $h_0$) implies after a short computation the first formula of the lemma.

We also get
\[D_2=\frac{1}{2} \cl_j AU_j-\frac{1}{4}
\sum_{j=1}^n\sum_{i<k}(\partial_iA_{kj}-\partial_k A_{ij})\cl_j\cl_i\cl_k,\]
but in the first term the action of $U_j$ at $p$ clearly coincides with the
covariant derivative (the frame is parallel at $p$) so we get
the advertised formula.  As for the second term, it turns out to vanish
miraculously because of the coefficients inside $P$. Indeed, due to the
Clifford commutations we first check that the sum where $j,i,k$ are all
distinct vanishes. The remaining sum is given at $p$ by
\[\sum_{i,k} \cl_k(\partial_k A_{ii}-\partial_i A_{ik})\]
which in invariant terms reads
\[\cl(d(\tr_{h_0}(A)))+\cl(\delta^\nabla(A))\]
where $\delta^\nabla$ is the formal adjoint of the symmetrized covariant
derivative with respect to $h_0$. It is known that
\[\delta^\nabla\Ric_{h_0}+\frac{d(\scal_{h_0})}{2}=0,\]
and from
\begin{align*}
\tr_{h_0}(\Ric_{h_0})=\scal_{h_0}, && \tr_{h_0}(A)=\frac{\scal_{h_0}}{2(n-1)}, &&
\delta^\nabla(\scal_{h_0}\cdot I)=-d(\scal_{h_0})
\end{align*}
we get the result.
\end{proof}

This lemma ends the proof of the theorem by using \eqref{forml1}.
\end{proof}

\appendix
\section{ Polyhomogeneous conormal distributions, densities, blow-ups and index sets}\label{appA}
On a compact manifold with corners $\bbar{X}$, consider the set of
boundary hypersurfaces $(H_{j})_{j=1}^m$ which are codimension $1$
submanifolds with corners. Let $\rho_1, \dots, \rho_m$ be some
 boundary defining functions of these hypersurfaces. An
index set $\mc{E}=(\mc{E}_1,\dots, \mc{E}_m)$ is a subset of
$(\cc\x\nn_0)^m$ such that for each $M \in \rr$ the number of points
$(\beta, j) \in \mc{E}_{j}$ with $\Re(\beta) \leq M$ is finite, if
$(\beta, k) \in \mc{E}_{j}$ then $(\beta + 1, k)\in \mc{E}_{j} $,
and if $k > 0$ then also $(\beta, k-1) \in \mc{E}_{j}$. We define
the set
\[\dot{C}^\infty(\bbar{X}):=\{f\in C^\infty(\bbar{X}); f\textrm{ vanishes to all orders on each } H_{j}\}.\]
Its dual $C^{-\infty}(\bbar{X})$ is called the set of \emph{extendible distributions}
(the duality pairing is taken with respect to a fixed smooth $1$-density on $\bbar{X}$).
Conormal distributions on manifolds with corners were defined and
analyzed by Melrose \cite{Me,APS}, we refer the reader to these works for more details, but we give here
some definitions.
We say that an extendible distribution $f$ on a manifold with corners $X$ with boundary hypersurfaces $(H_1,\dots,H_m)$
is \emph{polyhomogeneous conormal}
(phg for short) at the boundary, with index set $\mc{E}=(\mc{E}_1,\dots,\mc{E}_m)$, if it is smooth in  the interior $X$, conormal
(i.e., if it remains in a fixed weighted $L^2$ space under repeated application of vector fields tangent
to the boundary of $\bbar{X}$) and if  for each $s \in \rr$ we have
\[
\left( \prod_{j=1}^m \prod_{\substack{(z, p) \in \mc{E}_j\\
 \text{ s.t. } \Re (z) \leq s}} (V_j - z) \right) f = O\big( (\prod_{j=1}^m \rho_j )^s \big)
\]
where  $V_j$ is a smooth vector field on $\bbar{X}$ that takes the form
$V_j = \rho_j \partial_{\rho_j} + O(\rho_j^2)$ near $H_j$.
This implies that $f$ has an asymptotic expansion in powers
and logarithms near each boundary hypersurface. In particular, near the interior of $H_j$,
we have
\[f = \sum_{\substack{{(z,p)} \in \mc{E}_j\\
\text{ s.t. } \Re (z) \leq s}} a_{(z,p)} \rho_j^z (\log \rho_j)^p
+O(\rho_j^s)\] for every $s \in \rr$, where $a_{(z,p)}$ is smooth
in the interior of $H_j$, and $a_{(z,p)}$ is itself
polyhomogeneous on $H_j$. The set of polyhomogeneous conormal
distributions with index set $\mc{E}$ on $\bbar{X}$ with values in
a smooth bundle $F\to \bbar{X}$ will be denoted by
\[\mc{A}^{\mc{E}}_{\rm phg}(\bbar{X};F).\]
Recall the operations of addition and extended union of two index
sets $E_1$ and $E_2$, denoted by $E_1 + E_2$ and $E_1 \extunion
E_2$  respectively:
\begin{equation}\begin{split}
&E_1 + E_2 = \{ (\beta_1 + \beta_2, j_1 + j_2) \mid (\beta_1, j_1) \in E_1 \text{ and } (\beta_2 , j_2) \in E_2 \} \\
&E_1\, \extunion\, E_2 = E_1 \cup E_2 \cup \{ (\beta, j) \mid \exists (\beta, j_1) \in E_1, (\beta, j_2) \in E_2 \text{ with } j = j_1 + j_2 + 1 \}.
\end{split}\end{equation}
In what follows, we shall write $q$ for the index set
$\{ (q + n, 0) \mid n = 0, 1, 2, \dots \}$
for any $q \in \rr$. For any index set $E$ and $q \in \rr$, we write $E \geq q$ if $\Re(\beta) \geq q$
for all $(\beta, j) \in E$ and if $(\beta, j) \in E$ and $\Re(\beta) = q$ implies $j = 0$.  
Finally we say that $E$ is integral if $(\beta, j) \in E$ implies that $\beta \in \mathbb{Z}$.

On $\bbar{X}$, the most natural densities are the $b$-densities
introduced by Melrose \cite{Me,APS}. The bundle
$\Omega_b(\bbar{X})$ of $b$-densities is defined to be
$\rho^{-1}\Omega(\bbar{X})$ where $\rho=\prod_{j}\rho_j$ is a total
boundary defining function and $\Omega(\bbar{X})$ is simply the
usual smooth bundle of densities on $\bbar{X}$. In particular a
smooth section of the $b$-densities bundle restricts canonically
on each $H_j$ to a smooth $b$-density on $H_j$. The bundle of
$b$-half-densities
is simply $\rho^{-\demi}\Omega^\demi(\bbar{X})$.

A natural class of submanifolds, called \emph{p-submanifolds}, of
manifolds with corners is defined in Definition 1.7.4 in
\cite{Melbook}. If $Y$ is a closed $p$-submanifold of $\bbar{X}$,
one can define the blow-up $[\bbar{X};Y]$ of $\bbar{X}$ around $Y$,
this is a smooth manifold with corners where $Y$ is replaced by
its inward pointing spherical normal bundle $S^+NY$ and a smooth
structure is attached using polar coordinates around $Y$. The new
boundary hypersurface is diffeomorphic to $S^+NY$ and is called
\emph{front face} of $[\bX;Y]$, there is a canonical smooth
blow-down map $\beta:[\bX;Y]\to \bX$ which is the identity outside
the front face and the projection $S^+NY\to Y$ on the front face.
See section 5.3 of \cite{Melbook} for details. The pull-back
$\beta^*$ maps continuously $\dot{C}^\infty(\bbar{X})$ to
$\dot{C}^{\infty}([\bbar{X};Y])$ and it is a one-to-one
correspondence, giving by duality the same statement for
extendible distributions.

\section{Compositions of kernels conormal to the boundary diagonal}\label{appB}

In this section, we introduce a symbolic way to describe conormal distributions associated
to the diagonal $\Delta_\pl$ inside the corner of $\bbar{X}\x\bbar{X}$, $\bbar{X}\x\pl\bbar{X}$,
or $\pl\bbar{X}\x\bbar{X}$. In particular, we compare the class of operators introduced by
Mazzeo-Melrose (the $0$-calculus) to a natural class of pseudo-differential operators
we define by using oscillatory integrals. We will prove composition results using both 
the push-forward Theorem of Melrose 
\cite{Me} and some classical symbolic calculus. 
We shall use the notations from the previous sections.

\subsection{Operators on $\bbar{X}$}

We say that an operator $K:\dot{C}^\infty(\bbar{X})\to C^{-\infty}(\bbar{X})$ is in the class $I^s(\bbar{X}\x\bbar{X},\Delta_\pl)$ if its Schwartz kernel $K(m,m')\in C^{-\infty}(\bbar{X}\x\bbar{X})$
is the sum of a smooth function $K_\infty\in C^\infty(\bbar{X}\x\bbar{X})$ and a singular kernel
$K_s$ supported near $\Delta_\pl$, which can be written in local coordinates $(x,y,x',y')$ near a point
 $(0,y_0,0,y_0)\in \Delta_\pl$ under the form (here $x$ is a boundary defining function on $\bbar{X}$
 and $y$ some local coordinates on $\pl\bbar{X}$ near $y_0$, and prime denotes the right variable version of them)
\begin{equation}\label{koscill}
K_s(x,y,x',y')=\frac{1}{(2\pi)^{n+2}}\int_\rr\int_\rr\int_{\rr^n} e^{-ix\xi-ix'\xi'-i(y-y')\mu}a(x,y,x',y';\xi,\xi',\mu)d\mu d\xi d\xi'
\end{equation}
where $a$ is a smooth classical symbol of order $s\in \rr$ in the sense that it satisfies for all multi-indices $\alpha,\alpha',\beta$
\[ |\pl_{m}^\alpha\pl_{m'}^{\alpha'}\pl^\beta_{\zeta}a(m,m';\zeta)|\leq C_{\alpha,\alpha',\beta}(1+|\zeta|^2)^{s-|\beta|}\]
where $m=(x,y)\in \rr^+\x \rr^n$ and $\zeta:=(\xi,\xi',\mu)\in \rr\x\rr\x\rr^n$. The integral in \eqref{koscill} makes sense
as an oscillatory integral: we integrate by parts a sufficient number $N$ of times in $\zeta$ to
get $\Delta_\zeta^N a(m,m';\zeta)$ uniformly $L^1$ in $\zeta$; of course we pick up a singularity of
the form $(x^2+{x'}^2+|y-y'|^2)^{-N}$ by this process but the outcome still makes sense
as an element in the dual of $\dot{C}^{\infty}(\bbar{X}\x\bbar{X})$.
If $\til{X}$ is an open manifold extending $\bbar{X}$, such a kernel can be extended to a kernel $\til{K}$ on the
manifold $\til{X}\x\til{X}$  so that $\til{K}$ is classically conormal to the embedded closed submanifold $\Delta_\pl$.
Therefore our kernels (which are extendible distributions on $\bbar{X}\x\bbar{X}$) can freely be considered as restriction of distributional kernels acting on a subset of functions of $\til{X}\x\til{X}$, i.e.
the set $\dot{C}^\infty(\bbar{X}\x\bbar{X})$
which corresponds to smooth functions with compact support included in $\bbar{X}\x\bbar{X}$.
Standard arguments of pseudodifferential operator theory show that we can require
that $K_s$ in charts is, up to a smooth kernel, of the form
\[K_s(x,y,x',y')=\frac{1}{(2\pi)^{n+2}}\int_\rr\int_\rr\int_{\rr^n} e^{-ix\xi-ix'\xi'-i(y-y')\mu}a(y;\xi,\xi',\mu)d\mu d\xi d\xi'.\] 
Indeed, it suffices to apply a Taylor expansion of $a(x,y,x',y';\zeta)$
at $\Delta_\pl=\{x=x'=y-y'=0\}$ and use integration by parts to show that
the difference obtained by quantizing these
symbols and the symbols of the form $a(y,\zeta)$ is given by smooth kernels.

We say that the symbol $a$ is \emph{classical of order $s$} if it has an asymptotic expansion as $\zeta:=(\xi,\xi',\mu)\to \infty $
\begin{equation}\label{expansion}
a(y;\zeta)\sim \sum_{j=0}^\infty a_{s-j}(y;\zeta)
\end{equation}
where $a_j$ are homogeneous functions of degree $s-j$ in $\zeta$.
It is clear from their definition that operators in $I^s(\bbar{X}\x\bbar{X},\Delta_\pl)$ have smooth kernels
on $(\bbar{X}\x\bbar{X})\setminus \Delta_\pl$. Let us consider the diagonal singularity of $K$ when its symbol
is classical.
\begin{lemma}\label{phgexp}
An operator $K_s\in I^{s-n-2}(\bbar{X}\x\bbar{X},\Delta_\pl)$ has a kernel which is the sum of a smooth kernel
together with a kernel which is smooth outside $\Delta_\pl$ and has an expansion at $\Delta_\pl$ in local coordinates $(x,y,x',y')$ of the form
\begin{equation}\label{Kxy}
K_s(x,y,x',y')\sim \begin{cases}
R^{-s}\sum_{j=0}^\infty R^jK^j(y,\omega) & \textrm{ if }s\notin \zz,\\
R^{-s}\sum_{j=0}^\infty R^jK^j(y,\omega)
+\log(R)\sum_{j=0}^\infty R^jK^{j,1}(y,\omega) & \textrm{ if }s\in \nn_0,\\
R^{-s}(\sum_{j=0}^\infty R^jK^j(y,\omega)
+\log(R)\sum_{j=0}^\infty R^jK^{j,1}(y,\omega)) & \textrm{ if }s\in -\nn,
\end{cases}
\end{equation}
where $R:=(x^2+{x'}^2+|y-y'|^2)^\demi$, $(x,x',y-y'):=R\omega$ and $K^j,K^{j,1}$ are smooth.
\end{lemma}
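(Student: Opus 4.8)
The plan is to reduce the statement to a Fourier-transform computation in $n+2$ variables and then read off the conormal singularity from the classical description of the Fourier transform of homogeneous distributions.

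First I would use the normal form established just before the statement: after subtracting a smooth kernel and working in a coordinate chart, $K_s$ may be assumed of the form
\[
K_s(x,y,x',y')=\frac{1}{(2\pi)^{n+2}}\int_{\rr^{n+2}}e^{-iZ\cdot\zeta}\,a(y;\zeta)\,d\zeta,\qquad Z:=(x,x',y-y'),
\]
where $a(y;\cdot)$ is a classical symbol of order $s-n-2$ in $\zeta\in\rr^{n+2}$ depending smoothly on the parameter $y$, with $a(y;\zeta)\sim\sum_{j\ge0}a_{s-n-2-j}(y;\zeta)$ and $a_{s-n-2-j}(y;\cdot)$ homogeneous of degree $s-n-2-j$ for $|\zeta|$ large. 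Fixing $\chi\in C^\infty_c(\rr^{n+2})$ equal to $1$ near the origin, the low-frequency piece $\cF^{-1}_{\zeta\to Z}(\chi a)$ is smooth in $Z$ and hence belongs to the smooth part of the kernel; so I may assume $a$ is supported in $\{|\zeta|\ge1\}$ and each $a_{s-n-2-j}$ is honestly homogeneous there.

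Next comes the asymptotic summation. For every $N$ the symbol $a-\sum_{j=0}^{N}a_{s-n-2-j}$ has order $s-n-2-N-1$, so its inverse Fourier transform is of class $C^{k}$ (uniformly in $y$) as soon as $N$ is large relative to $k$; hence, modulo an arbitrarily smooth error, $K_s$ is a finite sum of the terms $\cF^{-1}(a_{s-n-2-j})$, and a Borel summation realizes the resulting formal series as a genuine conormal function modulo $C^\infty$. Writing $a_{s-n-2-j}(y;\zeta)=|\zeta|^{s-n-2-j}g_j(y;\zeta/|\zeta|)$ with $g_j$ smooth on $S^{n+1}$, I would invoke the standard description of the Fourier transform of a homogeneous distribution on $\rr^{d}$ with $d=n+2$ (conveniently obtained from the meromorphic family $\zeta\mapsto|\zeta|^{m}g(\zeta/|\zeta|)$ and its explicit Gamma-function Fourier transform, which is precisely the device used later in the paper): away from $Z=0$, $\cF^{-1}(a_{s-n-2-j})$ is homogeneous of degree $-d-(s-n-2-j)=j-s$; it equals $R^{j-s}K^{j}(y,\om)$ with $R=|Z|$, $\om=Z/R$ when $j-s\notin\nn_0$, and when $j-s\in\nn_0$ the pole of the relevant Gamma factor forces it to have the form $R^{j-s}K^{j}(y,\om)+R^{j-s}\log R\,K^{j,1}(y,\om)$ with $K^{j},K^{j,1}$ smooth. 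Collecting the terms gives $K_s(x,y,x',y')\sim R^{-s}\sum_{j\ge0}R^{j}\big(K^{j}(y,\om)+\epsi_j\,\log R\,K^{j,1}(y,\om)\big)$, where $\epsi_j=1$ if $j-s\in\nn_0$ and $\epsi_j=0$ otherwise.

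It remains to sort out for which $j$ one has $j-s\in\nn_0$. If $s\notin\zz$ this never occurs, so there are no logarithms; if $s\in\nn_0$ it occurs exactly for $j\ge s$, so after factoring out $R^{-s}$ the logarithmic series starts at $R^{0}\log R$; and if $s\in-\nn$ it occurs for every $j\ge0$, so the logarithmic series starts at $R^{-s}\log R$. These are exactly the three cases of \eqref{Kxy}, and uniformity in $y$ is automatic because $y$ enters only through the smooth symbol. The one genuinely delicate point is the middle step: one must keep careful track of the exceptional degrees under the Fourier transform and verify that neither the cutoff $\chi$ nor the symbol remainder can create or destroy a logarithmic term at a given power of $R$; everything else is routine bookkeeping.
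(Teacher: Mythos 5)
Your proposal is correct and follows essentially the same route as the paper: reduce to the normal form $K_s=\cF^{-1}_{\zeta\to Z}\big(a(y;\zeta)\big)$, strip off a smoothing remainder using the symbol expansion, and then read off the conormal asymptotics from the Fourier transform of each homogeneous term $a_{s-n-2-j}$, with log terms arising exactly in the critical degrees $-n-2-k$, $k\in\nn_0$ (equivalently $j-s\in\nn_0$). The only cosmetic difference is that the paper quotes Hörmander's Theorem~3.2.3 for the non-integer case and Beals--Greiner's Proposition~15.30 for the integer case, whereas you propose deriving both from the analytic family $|\zeta|^m g(\zeta/|\zeta|)$ and its Gamma-function Fourier transform; since the Beals--Greiner result is itself obtained by that analytic continuation, the two are the same mechanism and your version is simply a self-contained phrasing of what the paper cites.
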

\begin{proof}
Assume $K$ has a classical symbol $a$ like in \eqref{expansion}.
First, we obviously have that for any $N\in\nn$, $K\in C^{N}(\bbar{X}\x\bbar{X})$ if $s<-N$.
Let us write $t=s-n-2$, then we remark that for all $y$, the homogeneous function
$a_{t-j}(y,.)$ has a unique homogeneous extension as a homogeneous
distribution on $\rr^{n+2}$ of order $t-j$ if $s\notin j-\nn_0$ (see \cite[Th 3.2.3]{Ho}),
and its Fourier transform is homogeneous of order $-s+j$. Clearly, $K(x,y,x',y')$ can be written as the Fourier transform
in the distribution sense in $\zeta$ of $A_N+B_N$ where for $N\in\nn$
\begin{align*} A_N(y,\zeta):=\sum_{j=0}^N a_{t-j}(y;\zeta), && B_N(y,\zeta):=a(y,\zeta)-A_N(y,\zeta).\end{align*}
Now $|\zeta|^{-s+N} B_N(y,\zeta)$ is in $ L^1(d\zeta)$ in  $|\zeta|>1$ thus
$\mc{F}_{\zeta\to Z}((1-\chi(\zeta))B_N(y,\zeta))$ is in $C^{[N-s]}$ with respect to all variables
if $\chi\in C_0^\infty(\rr^{n+2})$ equals $1$ near $0$, while the Fourier transform
$\mc{F}(\chi B_N)$ and $\mc{F}(\chi A_N)$ have the same regularity and are smooth since the convolution
of $\mc{F}(\chi)$ with a homogeneous function is smooth. This implies the expansion of $K$ at the diagonal when
$t\notin \zz$.

For the case $t\in \zz$, this is  similar but a bit more complicated.
We shall be brief and refer to Beals-Greiner
\cite[Chap 3.15]{BeGr} for more details (this is done for the Heisenberg calculus there but their proof
obviously contains the classical case). Let us denote $\delta_\la$ the action of dilation by $\la\in\rr^+$ on the space $\mc{S}'$ of tempered distributions on $\rr^{n+2}$, then any homogeneous function $f_k$ of degree $-n-2-k\in -n-2-\nn_0$
on $\rr^{n+2}$ can be extended  to a distribution $\til{f}_k\in\mc{S}'$ satisfying
\begin{equation}\label{deltala}
\delta_\la(\til{f}_k)= \la^{-n-2-k} \til{f}_k + \la^{-n-2-k}\log (\la) P_k
\end{equation}
for some $P_k\in\mc{S}'$ of order $k$ supported at $0$. This element $P_k$ is zero if and only if
$f_k$ can be extended as a homogeneous distribution on $\rr^{n+2}$, or equivalently
\begin{align}\label{homogene}
\int_{S^{n+1}}f_k(\omega)\omega^\alpha d\omega=0,&& \forall \alpha\in \nn_0^{n+2}\textrm{ with }  |\alpha|=k.
\end{align}
According to Proposition 15.30  of \cite{BeGr}, the distribution $\til{f}_k$ has its Fourier transform which can be written outside $0$ as
\[\mc{F}(\til{f}_k)(Z)=L_k(Z)+M_k(Z)\log |Z| \]
where $L_k$ is a homogeneous function of degree $k$ on $\rr^{n+2}\setminus\{0\}$
and $M_k$ a homogeneous polynomial of degree $k$. Thus reasoning as above when $t\notin \zz$, this concludes the proof.
It can be noted from \eqref{homogene}�
that in the expansion at $\Delta_\pl$ in \eqref{Kxy}, one has $K^{j,1}=0$ for all $j=0,\dots, k$ for some $k\in\nn$
if the symbols satisfy the condition
\begin{align}\label{condition}
\int_{S^{n+1}}a_{-n-2-j}(y,\omega)\omega^\alpha d\omega=0,&& \forall \alpha\in \nn_0^{n+2}\textrm{ with }  |\alpha|=j
\end{align}
for all $ j=0,\dots,k$ and all $y\in M$. Using the expression of the symbol expansion after a change of coordinates, it is
straightforward to check that this condition is invariant with respect to the choice of coordinates.
\end{proof}
A consequence of this Lemma (or another way to state it) is that if $K\in I^{s-n-2}(\bbar{X}\x\bbar{X},\Delta_\pl)$
is classical, then its kernel
 lifts to a conormal polyhomogeneous distribution on the manifold with corners
$\bbar{X}\x_0\bbar{X}$ obtained by blowing-up $\Delta_\pl$ inside $\bbar{X}\x\bbar{X}$ and
\begin{equation}\label{betaK}
\beta^*K\in C^\infty(\bbar{X}\x_0\bbar{X})+
\begin{cases}
\rho_{\ff}^{-s}C^\infty(\bbar{X}\x_0\bbar{X}) & \textrm{ if }s\notin \zz,\\
\rho_{\ff}^{-s}C^\infty(\bbar{X}\x_0\bbar{X})+\log(\rho_{\ff})C^\infty(\bbar{X}\x_0\bbar{X}) & \textrm{ if }s\in \nn_0,\\
\rho_{\ff}^{-s}(C^\infty(\bbar{X}\x_0\bbar{X})+\log(\rho_{\ff})C^\infty(\bbar{X}\x_0\bbar{X})) & \textrm{ if }s\in -\nn.
\end{cases}
\end{equation}
Therefore $I^s(\bbar{X}\x\bbar{X},\Delta_\pl)$ is a subclass of the full $0$-calculus of Mazzeo-Melrose \cite{MM}, in particular with no interior diagonal singularity. Let us make this more precise:
\begin{lemma}\label{rel0calculus}
Let $\ell\in-\nn$, then a classical operator $K\in I^{\ell}(\bbar{X}\x\bbar{X},\Delta_\pl)$
with a local symbol expansion \eqref{expansion}
has a kernel which lifts to $\beta^*K\in \rho_{\rm ff}^{-\ell-n-2}C^\infty(\bbar{X}\x_0\bbar{X})+C^\infty(\bbar{X}\x_0\bbar{X})$
if the symbol satisfies the
condition \eqref{condition} for all $j\in\nn_0$. Conversely, if $K\in C^{-\infty}(\bbar{X}\x\bbar{X})$
is a distribution which lifts to $\beta^*K$ in $\rho_{\rm ff}^{-\ell-n-2}C^\infty(\bbar{X}\x_0\bbar{X})+C^\infty(\bbar{X}\x_0\bbar{X})$,
then it is  the kernel of a classical operator in $I^{-n-2}(\bbar{X}\x\bbar{X},\Delta_\pl)$ with a symbol satisfying \eqref{condition} for
all $j\in\nn_0$.
\end{lemma}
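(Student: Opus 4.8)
\textbf{Proof strategy for Lemma \ref{rel0calculus}.}
The plan is to prove the two directions separately, both times reducing to a local computation near a point of $\Delta_\pl$ and using the structure of the diagonal expansion from Lemma \ref{phgexp}. First I would set up coordinates $(x,y,x',y')$ near $(0,y_0,0,y_0)\in\Delta_\pl$, with $R:=(x^2+{x'}^2+|y-y'|^2)^{1/2}$ a defining function for the front face $\ff$ of $\bbar X\x_0\bbar X$, and $\omega:=(x,x',y-y')/R$ coordinates on the sphere bundle replacing $\Delta_\pl$; the blow-down map $\beta$ just sends $(R,\omega,y)\mapsto(x,y,x',y')$. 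The key point to recall is the third case of \eqref{Kxy}: for $\ell\in-\nn$ (so $s:=\ell+n+2\in-\nn\cup\{0,1,\dots\}$; in the relevant range $t=\ell\in-\nn$), a classical operator in $I^{\ell}$ has kernel of the form $R^{-s}\big(\sum_j R^j K^j(y,\omega)+\log R\sum_j R^j K^{j,1}(y,\omega)\big)$ modulo a smooth kernel, and that by \eqref{homogene} the log coefficients $K^{j,1}$ vanish for $j=0,\dots,k$ precisely when the symbol satisfies the moment condition \eqref{condition} for those $j$.

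For the forward direction: assume $K\in I^{\ell}(\bbar X\x\bbar X,\Delta_\pl)$ is classical and \eqref{condition} holds for all $j\in\nn_0$. Then all the log coefficients $K^{j,1}$ in \eqref{Kxy} vanish, so the local expansion of the singular part is $R^{-s}\sum_{j\ge 0}R^j K^j(y,\omega)$ with smooth $K^j$, i.e.\ $R^{-s}$ times a function smooth in $(R,\omega,y)$. Since $\beta^*R=\rho_\ff\cdot(\text{positive smooth})$ and the $(R,\omega,y)$ are (up to a smooth positive factor and smooth change) the local coordinates on $\bbar X\x_0\bbar X$ near $\ff$, this is exactly $\rho_\ff^{-s}C^\infty(\bbar X\x_0\bbar X)=\rho_\ff^{-\ell-n-2}C^\infty(\bbar X\x_0\bbar X)$; adding back the smooth term $K_\infty$ gives the claimed membership in $\rho_\ff^{-\ell-n-2}C^\infty+C^\infty$. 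One must also note, as already observed at the end of the proof of Lemma \ref{phgexp}, that condition \eqref{condition} is coordinate-independent, so the local statements patch together with a partition of unity on $\Delta_\pl$.

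For the converse: suppose $\beta^*K\in\rho_\ff^{-\ell-n-2}C^\infty(\bbar X\x_0\bbar X)+C^\infty(\bbar X\x_0\bbar X)$. Away from $\Delta_\pl$ this is smooth, so the content is local near $\Delta_\pl$, where $\beta^*K=R^{-s}b(R,\omega,y)+(\text{smooth})$ with $b$ smooth and $s=\ell+n+2$. Expanding $b$ in its Taylor series at $R=0$, $b\sim\sum_{j\ge0}R^j b_j(\omega,y)$, we get $K\sim\sum_{j\ge0}R^{j-s}b_j(\omega,y)$, a classical step-one polyhomogeneous conormal expansion at $\Delta_\pl$ with no log terms. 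The task is then to recognize such a kernel as the quantization of a classical symbol of order $-n-2$: this is precisely the inverse-Fourier-transform computation already used in Lemma \ref{phgexp}, run backwards. Each term $R^{j-s}b_j(\omega,y)$ is, for fixed $y$, a homogeneous distribution of degree $j-s=j-\ell-n-2$ on $\rr^{n+2}$ in the variable $Z=(x,x',y-y')$ (here we use that there is no log, hence, by \cite[Th 3.2.3]{Ho} or the Beals-Greiner discussion, it is a genuine homogeneous \emph{distribution}, not merely a homogeneous function with an obstruction); its Fourier transform in $\zeta=(\xi,\xi',\mu)$ is homogeneous of degree $-n-2-j+\ell$, hence by Borel summation we obtain a classical symbol $a(y;\zeta)\sim\sum_j a_{-n-2-j}(y;\zeta)$ whose quantization \eqref{koscill} reproduces $K$ modulo smoothing. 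Finally, because $K$ has \emph{no} log terms in its $\Delta_\pl$-expansion, reversing the equivalence ``\eqref{condition} for $j=0,\dots,k$ $\iff$ $K^{j,1}=0$ for $j=0,\dots,k$'' recorded in Lemma \ref{phgexp} forces the symbol $a$ to satisfy \eqref{condition} for all $j\in\nn_0$.

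\textbf{Main obstacle.} The delicate point is the converse direction, specifically the step of producing a classical symbol from a log-free polyhomogeneous expansion at $\Delta_\pl$: one needs the extension/uniqueness theory of homogeneous distributions (the fact that degree $-n-2-j$ homogeneous functions on the sphere integrating against all degree-$j$ monomials to zero extend \emph{uniquely} as homogeneous distributions, and otherwise only up to the $\log$-defect $P_k$ of \eqref{deltala}) to be applied in the right direction, together with an honest Borel-summation argument producing a symbol with the prescribed homogeneous components. The bookkeeping is routine once Lemma \ref{phgexp} and its proof (via \cite{BeGr,Ho}) are in hand, but care is needed to track that the absence of $\log\rho_\ff$ in $\beta^*K$ is exactly equivalent to the full family of moment conditions \eqref{condition}; everything else is a partition-of-unity patching together local oscillatory-integral representations, using the coordinate-invariance of \eqref{condition} already noted.
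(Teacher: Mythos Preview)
Your proposal is correct and follows essentially the same approach as the paper's proof: both directions reduce to the polar-coordinate expansion of Lemma \ref{phgexp} and the equivalence between the moment condition \eqref{condition} and the vanishing of the $\log$ coefficients via \eqref{homogene}. The paper proves the converse first and then uses it to establish coordinate-invariance for the forward direction, whereas you cite the coordinate-invariance already recorded at the end of the proof of Lemma \ref{phgexp}; both routes are fine. One small arithmetic correction: the Fourier transform on $\rr^{n+2}$ of a homogeneous distribution of degree $j-s=j-\ell-n-2$ is homogeneous of degree $-(n+2)-(j-\ell-n-2)=\ell-j$, not $-n-2-j+\ell$; the resulting symbol therefore has components $a_{\ell-j}$ and the operator lies in $I^{\ell}$ (the paper's own proof gives degree $\ell-j$, so the ``$I^{-n-2}$'' in the converse statement is a typo for $I^{\ell}$).
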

\begin{proof}
Let us start with the converse: we can extend smoothly the kernel $\beta^*K$ to the blown-up space
$[\til{X}\x\til{X}, \Delta_\pl]$ where $\til{X}$ is an open manifold extending smoothly $\bbar{X}$. Then the extended
function has an expansion to all order in polar coordinates
$(R,\omega)$ at $\{R=0\}$ (i.e., around $\Delta_\pl$) where $R=(x^2+{x'}^2+|y-y'|^2)^\demi$ and $R\omega=(x,x',y-y')$
\begin{align*}K(x,y,x',y')- \sum_{j=0}^k R^{-\ell -n-2+j}K^j(y,\omega) \in C^{k}(\bbar{X}\x\bbar{X}),&& \forall k\in\nn\end{align*}
for some smooth $K^j$, in particular using Fourier transform in $Z=(x,x',y-y')$ one finds that for all $k\in\nn$, there exists a classical symbol $a^{k}(y,\zeta)$
\[K(x,y,x',y')-\frac{1}{(2\pi)^{n+2}}\int e^{ix\xi+ix'\xi'+i(y-y')\mu} a^k(y;\xi,\xi',\mu)d\xi d\xi' d\mu \in C^k(\bbar{X}\x\bbar{X})\]
with $a^k$ being equal to  $\sum_{j=0}^ka^k_j(y;\zeta)$ when $|\zeta|>1$
for some homogeneous functions $a^k_j$ of degree $\ell-j$. Moreover, the $a^k_j$ can be extended
as homogeneous distribution on $\rr^{n+2}$ since they are given by Fourier transforms of
the homogeneous distributions $K^j(y,Z)$ in the variable $Z$. Using
that a homogeneous function on $\rr^{n+2}\setminus \{0\}$ which extends as a homogeneous distribution on $\rr^{n+2}$
has no $\log \la$ terms in \eqref{deltala}, or equivalently satisfies \eqref{homogene}, this ends one way.

To prove the first statement, it suffices to consider the kernel in local coordinates and locally $\beta^*K$
has the structure \eqref{betaK} with no $\log(\rho_\ff)$ if the local symbol satisfies \eqref{condition}.
Notice that having locally the structure $\rho_\ff^{-s}C^\infty(\bbar{X}\x\bbar{X})$ for a function is a property which is
independent of the choice of coordinates. But from what we just proved above, this implies that in any choice of coordinates
the local symbol satisfies \eqref{condition}.
\end{proof}

We shall call the subclass of operators in Lemma \ref{rel0calculus} the class of \emph{log-free classical operators}
of order $\ell\in-\nn$, and denote it $I_{\rm lf}^\ell(\bbar{X}\x\bbar{X},\Delta_\pl)$.

For (log-free if $s\in-\nn$) classical operators in $I^s(\bbar{X}\x\bbar{X},\Delta_\pl)$,
there is also a notion of \emph{principal symbol} which is  defined as a homogeneous section
of degree $s$ of the conormal bundle $N^*\Delta_\pl$: if $a$ has an expansion
$a(y,\zeta)\sim \sum_{j=0}^\infty a_{s-j}(y,\zeta)$ as $\zeta\to \infty$ with $a_{s-j}$ homogeneous of degree
$s-j$ in $\zeta$, then the principal symbol is given by $\sigma_{\rm pr}(K)=a_s$.
The principal symbol is actually not invariantly defined
if one considers $K$ as an extendible distribution on $\bbar{X}\x\bbar{X}$ :
if $a(y,\zeta)$ and $a'(y,\zeta)$ are two classical symbols
for the kernel $K$, then if $Z=(x,x',z)$
\[\mc{F}_{\zeta \to Z} (a_s(y,\zeta)-a_s'(y,\zeta))=0 \textrm{ when }x>0 \textrm{ and } x'>0,\]
thus it is defined only up to this equivalence relation.

To make the correspondence with the 0-calculus of Mazzeo-Melrose \cite{MM}, we recall that the normal
operator of an operator $K\in C^\infty(\bbar{X}\x_0\bbar{X})$ is given by the restriction to the front face: if $y\in \Delta_\pl$,
$N_y(K):= K|_{\ff_y}$ where $\ff_y$ is the fiber at $y$ of the unit interior pointing spherical normal bundle
$S^+N\Delta_\pl$ of $\Delta_\pl$ inside $\bbar{X}\x\bbar{X}$, then we remark that the normal operator at $y\in\Delta_\pl$ of
an admissible operator $K\in I_{\rm lf}^{-n-2}(\bbar{X}\x\bbar{X};\Delta_\pl)$
is given by the homogeneous function of degree $0$
on $\rr^+\x\rr^+\x \rr^n\simeq \ff_y\x\rr_+$
\[N_y(K)(Z)=\mc{F}_{\zeta\to Z}(\sigma_{\rm pr}(K)(y,\zeta)).\]

\subsection{Operators from $\bbar{X}$ to $\pl\bbar{X}$ and conversely}\label{interiortoboundary}
We define operators in $I^s(\bbar{X}\x\pl\bbar{X},\Delta_\pl)$ and $I^s(\pl\bbar{X}\x \bbar{X},\Delta_\pl)$ by saying that their
respective distributional kernels are the sum of a smooth kernel on $\bbar{X}\x\pl\bbar{X}$ (resp.
$\pl\bbar{X}\x \bbar{X}$) and of a singular kernel $K_s\in C^{-\infty}(\bbar{X}\x\pl\bbar{X})$
(resp. $L_s\in C^{-\infty}(\pl\bbar{X}\x\bbar{X})$) supported near  $\Delta_\pl$ of the form (in local coordinates)
\begin{equation}\label{operatorsKL}
\begin{split}
&K_s(x,y,y')=\frac{1}{(2\pi)^{n+1}}\int e^{-ix\xi+i(y-y')\mu}a(y';\xi, \mu)d\xi d\mu ,\\ 
& L_s(y,x',y')=\frac{1}{(2\pi)^{n+1}}\int e^{ix'\xi'+i(y-y')\mu}b(y;\xi', \mu)d\xi' d\mu
\end{split}\end{equation}
with $a$ and $b$ some smooth symbols
\begin{align*} |\pl_y^\alpha \pl_{\zeta}^\beta a(y,\zeta)|\leq C_{\alpha,\beta}\cjg \zeta\cjd^{s-|\beta|},
&& |\pl_y^\alpha \pl_{\zeta}^\beta b(y,\zeta)|\leq C_{\alpha,\beta}\cjg \zeta\cjd^{s-|\beta|}\end{align*}
for all $\alpha,\beta$. We shall say they are \emph{classical} if their symbols have an expansion in homogeneous
functions at $\zeta\to \infty$, just like above for operators on $\bbar{X}$.
It is easy to see that such operators map respectively $\dot{C}^\infty(\bbar{X})$ to $C^\infty(\pl\bbar{X})$
and $C^\infty(\pl\bbar{X})$ to $C^{-\infty}(\bbar{X})\cap C^\infty(X)$.

Using the exact same arguments as for operators on $\bbar{X}$, we have the following
\begin{lemma}\label{rel0calculus1}
Let $\ell\in-\nn$, then a classical operator $K\in I^{\ell}(\bbar{X}\x\pl\bbar{X},\Delta_\pl)$ with a local symbol expansion
$a(y,\zeta)\sim \sum_{j=0}^\infty a_{-n-1-j}(y,\zeta)$
has a kernel which lifts to $\beta_1^*K\in \rho_{\rm ff}^{-\ell-n-1}C^\infty(\bbar{X}\x_0\pl\bbar{X})+C^\infty(\bbar{X}\x_0\pl\bbar{X})$ if
\begin{align}\label{condition2}
\int_{S^{n}}a_{-n-1-j}(y,\omega)\omega^\alpha d\omega=0,&& \forall \alpha\in \nn_0^{n+1}\textrm{ with }  |\alpha|=j
\end{align}
for all $j\in\nn_0$. Conversely, if $K\in C^{-\infty}(\bbar{X}\x\bbar\pl{X})$
is a distribution which lifts to $\beta_1^*K$ in $\rho_{\rm ff}^{-\ell-n-1}C^\infty(\bbar{X}\x_0\pl\bbar{X})+C^\infty(\bbar{X}\x_0\pl\bbar{X})$, then it is
the kernel of a classical operator in $I^{\ell}(\bbar{X}\x\pl\bbar{X},\Delta_\pl)$ with a symbol satisfying \eqref{condition} for
all $j\in\nn_0$. The symmetric statement holds for operators in $I^{\ell}(\pl\bbar{X}\x\bbar{X},\Delta_\pl)$.
\end{lemma}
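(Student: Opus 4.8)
The plan is to transcribe the proof of Lemma \ref{rel0calculus} essentially line by line, replacing $\rr^{n+2}$ by $\rr^{n+1}$ throughout (the oscillatory integrals \eqref{operatorsKL} now carry the $n+1$ phase variables $\zeta=(\xi,\mu)$ instead of $(\xi,\xi',\mu)$), and accordingly $S^{n+1}$ by $S^{n}$ and the diagonal $\Delta_\pl\subset\bbar{X}\x\bbar{X}$ by $\Delta_\pl\subset\bbar{X}\x\pl\bbar{X}$ (respectively $\pl\bbar{X}\x\bbar{X}$). The only geometric input is that a boundary defining function $\rho_{\ff}$ of the front face of $\bbar{X}\x_0\pl\bbar{X}=[\bbar{X}\x\pl\bbar{X};\Delta_\pl]$ coincides, up to a smooth positive factor, with the lift of $R:=(x^2+|y-y'|^2)^{\demi}$, and that polar coordinates $(R,\omega)$ with $R\omega=(x,y-y')$ give a smooth structure transverse to $\ff$; both are immediate from the construction of the blow-up.

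For the converse inclusion I would argue as follows. Given $K\in C^{-\infty}(\bbar{X}\x\pl\bbar{X})$ that lifts to $\beta_1^*K\in\rho_{\ff}^{-\ell-n-1}C^\infty(\bbar{X}\x_0\pl\bbar{X})+C^\infty(\bbar{X}\x_0\pl\bbar{X})$, pick an open manifold $\til{X}$ extending $\bbar{X}$ and extend $\beta_1^*K$ smoothly to $[\til{X}\x\pl\bbar{X};\Delta_\pl]$; the extended kernel then has a complete polyhomogeneous expansion in $(R,\omega)$ at $\Delta_\pl$, so for every $k\in\nn$
\[
K(x,y,y')-\sum_{j=0}^{k}R^{-\ell-n-1+j}\,K^{j}(y',\omega)\ \in\ C^{k}(\bbar{X}\x\pl\bbar{X})
\]
with $K^{j}$ smooth. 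Taking Fourier transform in $Z=(x,y-y')$, exactly as in Lemma \ref{rel0calculus}, produces for each $k$ a classical symbol $a^{k}(y';\zeta)$ of order $\ell$, equal to $\sum_{j=0}^{k}a^{k}_{j}(y';\zeta)$ for $|\zeta|>1$ with $a^{k}_{j}$ homogeneous of degree $\ell-j$, whose quantization reproduces $K$ modulo $C^{k}$; this exhibits $K$ as a classical operator in $I^{\ell}(\bbar{X}\x\pl\bbar{X},\Delta_\pl)$. Since $a^{k}_{j}$ is the Fourier transform of the homogeneous distribution $K^{j}(y',\cdot)$ on $\rr^{n+1}$, it extends to a homogeneous distribution on all of $\rr^{n+1}$; by \cite[Th 3.2.3]{Ho} and \eqref{deltala} this forces the corresponding $\log(\la)$ obstruction to vanish, i.e.\ the integral condition \eqref{homogene} over $S^{n}$ to hold, which is exactly \eqref{condition2}.

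For the forward inclusion I would run the local analysis of Lemma \ref{phgexp} in ambient dimension $n+1$: splitting the symbol as $A_{N}+B_{N}$, the Fourier transform of $B_{N}$ is $C^{k}$ with $k\to\infty$ as $N\to\infty$, while the Fourier transform of each homogeneous piece of $A_{N}$, after its canonical extension to a homogeneous distribution on $\rr^{n+1}$, has the form $L_{j}(Z)+M_{j}(Z)\log|Z|$ with $L_{j}$ homogeneous and $M_{j}$ a homogeneous polynomial (Beals-Greiner \cite[Chap 3.15]{BeGr}); condition \eqref{condition2} forces each obstruction $M_{j}$ (equivalently the distribution $P_{j}$ of \eqref{deltala}) to vanish, so the kernel has a log-free polyhomogeneous expansion in $R$ at $\Delta_\pl$ and hence $\beta_1^*K\in\rho_{\ff}^{-\ell-n-1}C^\infty(\bbar{X}\x_0\pl\bbar{X})+C^\infty(\bbar{X}\x_0\pl\bbar{X})$. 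Coordinate-independence of \eqref{condition2}, checked as at the end of the proof of Lemma \ref{phgexp}, lets the local statements patch into the global one, and the symmetric statement for $I^{\ell}(\pl\bbar{X}\x\bbar{X},\Delta_\pl)$ follows by interchanging $(x,\xi)$ with $(x',\xi')$, i.e.\ by passing to transposed kernels. I do not expect a genuine obstacle, since the argument is essentially a transcription of the closed case treated in Lemma \ref{rel0calculus}; the only points needing care are the dimension bookkeeping ($n+1$ versus $n+2$) and, in the converse, checking that the Fourier transforms of the polar-expansion coefficients $K^{j}$ are honest classical symbols of the stated order, which is the same application of H\"ormander's theory of homogeneous distributions as before.
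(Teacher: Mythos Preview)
Your proposal is correct and is precisely the approach the paper takes: the paper does not give a separate proof but simply states that the lemma follows ``using the exact same arguments as for operators on $\bbar{X}$,'' i.e., by transcribing the proofs of Lemmas \ref{phgexp} and \ref{rel0calculus} with $\rr^{n+2}$ replaced by $\rr^{n+1}$. Your write-up spells out this transcription accurately, including the dimension shifts and the passage to transposed kernels for the $\pl\bbar{X}\x\bbar{X}$ case.
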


We shall also call the operators of Lemma \ref{rel0calculus1} \emph{log-free classical operators} and denote this class by
$I^\ell_{\rm lf}(\bbar{X}\x\pl\bbar{X},\Delta_\pl)$ and $I^\ell_{\rm lf}(\pl\bbar{X}\x\bbar{X},\Delta_\pl)$.

Notice that, since the restriction of a function in $C^\infty(\bbar{X}\x_0\bbar{X})$ to the right boundary gives
a function in $C^\infty(\bbar{X}\x_0\pl\bbar{X})$, we deduce that an operator
$I^{-n-2}(\bbar{X}\x\bbar{X},\Delta_\pl)$ satisfying condition \eqref{condition}
induces naturally (by restriction to the boundary on the right variable) 
an operator  in $I^{-n-1}(\bbar{X}\x\pl\bbar{X},\Delta_\pl)$ satisfying \eqref{condition2}. This can also be seen by considering the
oscillatory integrals restricted to $x'=0$ but it is more complicated to prove.

\subsection{Compositions}

We start with a result on the composition of operators mapping from $\bar{X}$ to $M$ with operators 
mapping $M$ to $M$ or $M$ to $\bar{X}$. This is will be done using the push-forward theorem of Melrose  \cite[Th. 5]{Me}
\begin{prop}\label{composition}
Let $A:C^\infty(M;{^0\Sigma}\otimes \Omega^\demi)\to
C^\infty(M;{^0\Sigma}\otimes \Omega^\demi)$ be a
pseudo-differential operator of negative order with lifted kernel
 in $\mc{A}_{\rm phg}^{{E_{\rm ff}}}(M\x_0 M;
\mc{E}\otimes \Omega_b^\demi)$. Let $B:
\dot{C}^{\infty}(\bbar{X};{^0\Sigma}\otimes\Omega_b^\demi) \to
C^\infty(M;{^0\Sigma}\otimes\Omega^\demi)$ be an operator with
lifted kernel in $\mc{A}_{\rm phg}^{F_{\rm ff},F_{\rm
rb}}(M\x_0\bbar{X};\mc{E}_r\otimes \Omega_b^\demi)$ and let
$C:C^{\infty}(M;{^0\Sigma}\otimes\Omega^\demi)\to
C^{-\infty}(\bbar{X};{^0\Sigma}\otimes \Omega_b^\demi)$ be an
operator with lifted kernel on $\mc{A}_{\rm phg}^{G_{\rm
ff},G_{\rm lb}}(\bbar{X}\x_0{M}; \mc{E}_l\otimes \Omega_b^\demi)$.
Then the Schwartz kernels of $A\circ B$ and $C\circ B$ lift to
polyhomogeneous conormal kernels
\begin{align*}
k_{A\circ B}\in \mc{A}_{\rm phg}^{H_{\rm ff},H_{\rm rb}}(M\x_0\bbar{X};
\mc{E}_r\otimes \Omega_b^\demi),&&
k_{C\circ B}\in \mc{A}_{\rm phg}^{I_{\rm ff},I_{\rm lb},I_{\rm rb}}(\bbar{X}\x_0\bbar{X};
\mc{E}\otimes \Omega_b^\demi)\end{align*}
and the index sets satisfy
\begin{align*}
&H_{\rm ff}=(E_{\rm ff}+ F_{\rm ff}+\ndemi)\, \extunion \,(F_{\rm rb}+\ndemi),& H_{\rm rb}=F_{\rm rb}\,\extunion \,(F_{\rm ff}+\ndemi)&& \\
&I_{\rm ff}= (F_{\rm ff}+G_{\rm ff}+\ndemi)\,\extunion\, (F_{\rm rb}+G_{\rm lb}+\ndemi),&
I_{\rm lb}= G_{\rm lb}\,\extunion\,(G_{\rm ff}+\ndemi), && 
I_{\rm rb}=F_{\rm rb}\,\extunion \,(F_{\rm ff}+\ndemi).
\end{align*}
\end{prop}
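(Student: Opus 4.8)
The plan is to realise each composition as a push-forward, along a b-fibration, of the product of the two lifted kernels pulled back to a common ``triple space'', and then to read off polyhomogeneity and the index sets from Melrose's push-forward theorem \cite{Me}. I describe the argument for $A\circ B$; the case $C\circ B$ is entirely parallel, with the left-hand copy of $M$ replaced by $\bbar{X}$ and one more boundary diagonal to blow up. One works over $M_m\x M_{m'}\x\bbar{X}_{m''}$, the composition being integration over the central factor $m'\in M$. Inside this product sit the submanifolds carrying the relevant singularities: the diagonal $\Delta_{mm'}$ of the first two factors (where $A$ is conormal), the boundary diagonal $\{m'=m''\in\pl\bbar{X}\}$ (blown up to form $M\x_0\bbar{X}$ for $B$), the boundary diagonal $\{m=m''\in\pl\bbar{X}\}$ (blown up for the output), together with the deepest stratum where the last two of these meet. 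Blowing these up in order of increasing codimension produces a manifold with corners $W$ carrying three stretched projections $\pi_L:W\to M\x_0 M$, $\pi_R:W\to M\x_0\bbar{X}$, $\pi_C:W\to M\x_0\bbar{X}$, lifting the projections that forget $m''$, $m$, $m'$ respectively; one checks in projective coordinates near each corner of $W$, as in the Mazzeo--Melrose triple space for the $0$-calculus \cite{MM} adapted to this mixed setting, that all three of them are b-fibrations.

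The coefficient bundles and the $b$-half-density bundles lift compatibly to $W$ through $\pi_L$ and $\pi_R$, so $\pi_L^*k_A\cdot\pi_R^*k_B$ is a well-defined polyhomogeneous conormal section on $W$: the product of polyhomogeneous conormal distributions is polyhomogeneous conormal, with index set at each boundary hypersurface the sum of the two pulled-back index sets, or a single summand where the other factor is smooth there. Recording which hypersurface of $W$ lies over which hypersurface of $M\x_0 M$ and $M\x_0\bbar{X}$ under $\pi_L$, $\pi_R$ gives explicit index sets for this product at every face of $W$.

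Up to the usual conversions between $b$-half-densities and the metric half-densities, the Schwartz kernel of $A\circ B$ is $(\pi_C)_*(\pi_L^*k_A\cdot\pi_R^*k_B)$. The hypothesis that $A$ has negative order ensures that the integrand is integrable over the fibres of $\pi_C$, so this push-forward is well defined, and Melrose's push-forward theorem gives that the result is polyhomogeneous conormal on $M\x_0\bbar{X}$, with index set at a boundary hypersurface $H$ equal to the extended union, over the hypersurfaces of $W$ mapped onto $H$ by $\pi_C$, of the corresponding index sets of $\pi_L^*k_A\cdot\pi_R^*k_B$ shifted by the exponent of the $\pi_C$-fibre Jacobian. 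Doing this bookkeeping, tracking in particular the shift $\ndemi$ produced by integrating over the $n$-dimensional fibres of the front faces (the origin of the $+\ndemi$ terms in $H_{\rm ff}$, $H_{\rm rb}$, $I_{\rm ff}$, $I_{\rm lb}$, $I_{\rm rb}$), reproduces the stated index sets; the same three steps, with the triple space now a blow-up of $\bbar{X}_m\x M_{m'}\x\bbar{X}_{m''}$, yield the formulas for $C\circ B$.

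The delicate part is the construction of $W$: one must choose the order of blow-ups so that all three stretched projections are \emph{simultaneously} b-fibrations, not merely b-maps. Once $W$ is correctly built, the rest is a mechanical application of the product rule for polyhomogeneous expansions and of Melrose's theorem; the only remaining care needed is to keep the half-density and $b$-density conventions consistent throughout, so that the numerical shifts come out as exactly $\ndemi$.
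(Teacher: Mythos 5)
Your strategy is exactly the one the paper uses: form a triple space by blowing up the relevant diagonals in $M\times M\times\bbar{X}$ (resp.\ $\bbar{X}\times M\times\bbar{X}$), pull back the two lifted kernels by the stretched left and right projections, multiply, and push forward by the stretched central projection, reading off the index sets from Melrose's push-forward theorem together with a b-density lift computation that produces the $\ndemi$ shifts. So the route is the same; what follows are the two places where your write-up needs correction or completion.

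First, the blow-up order is stated backwards. You propose to blow up ``in order of increasing codimension'', which would place the triple boundary diagonal $\Delta_3=\{m=m'=m''\in\pl\bbar{X}\}$ last. But $\Delta_3$ is contained in every one of the other submanifolds you list (it is the common intersection, not merely ``where the last two meet''), so it must be blown up \emph{first}, i.e.\ in order of \emph{decreasing} codimension. Only after $\Delta_3$ is blown up do the lifts of the three pairwise diagonals become disjoint p-submanifolds, so that the remaining blow-ups commute and the three stretched projections all lift to b-fibrations. With the order as you wrote it, the lift of $\Delta_3$ after the earlier blow-ups is no longer a p-submanifold in the required sense and the b-fibration claim you rely on would fail; this is precisely the point the paper handles by blowing up $\Delta_3$ first and then invoking the commutation of blow-ups of disjoint submanifolds.

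Second, the actual derivation of the index sets is only asserted (``doing this bookkeeping \ldots reproduces the stated index sets''). For a complete proof you must record, face by face on the triple space, the index set of $\pi_L^*k_A\cdot\pi_R^*k_B$ (resp.\ for $C\circ B$), compute the exponents of the lifted b-densities at each face (this is where the various $\ndemi$ and $n$ shifts arise), and then read off the extended unions over the faces collapsed by $\pi_C$; one must also verify the integrability hypothesis of the push-forward theorem at the face created by blowing up the interior diagonal $\{m=m'\}$, which is exactly where the negative-order assumption on $A$ enters. None of this is hard, but it is the content of the proposition, so it should be written out rather than waved at.
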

\begin{proof} The proof is an application of Melrose push-forward theorem. Let us discuss
first the composition $A\circ B$. We denote by $\Delta$ both the
diagonal in $M\x M$ and the submanifold $\{(m,m')\in
M\x\bbar{X};m=m'\}$, by $(\pi_j)_{j=l,c,r}$ the canonical
projections of $M\x M\x\bbar{X}$ obtained by projecting-off the $j$
factor (here $l,c,r$ mean left, center, right), and let
\begin{align*}\Delta_3:=\{(m,m',m'')\in M\x M\x\bbar{X}; m=m'=m''\},
&& \Delta_{2,j}=\pi_j^{-1}(\Delta) \textrm{ for }j=l,c,r.\end{align*}
The triple space $M\x_0M\x_0\bbar{X}$ is the iterated blow-up
\begin{equation}\label{triplespace}
M\x_0M\x_0\bbar{X}:=[M\x M\x\bbar{X};\Delta_3,\Delta_{2,l},\Delta_{2,c},\Delta_{2,r}].
\end{equation}
The submanifolds to blow-up are $p$-submanifolds, moreover $\Delta_3$ is contained in each $\Delta_{2,j}$
and the lifts of $\Delta_{2,j}$ to the blow-up $[M\x M\x \bbar{X};\Delta_3]$ are disjoint. Consequently
(see for instance \cite[Lemma 6.2]{GuHa}) the order of blow-ups can be commuted and the canonical projections $\pi_j$ lift to maps
\[
\beta_l:M\x_0 M\x_0\bbar{X}\to M\x_0\bbar{X},\, \beta_c:M\x_0 M\x_0\bbar{X}\to M\x_0\bbar{X}, \,
\beta_r:M\x_0 M\x_0\bbar{X}\to M\x_0 M\]
which are $b$-fibrations.
The manifold $M\x_0M\x_0\bbar{X}$ has $5$ boundary hypersurfaces,
the front face $\ff'$ obtained by blowing up $\Delta_3$, the faces
${\rm lf},{\rm cf}, {\rm rf}$ obtained from the respective blow-up
of $\Delta_{2,l},\Delta_{2,c},\Delta_{2,r}$ and finally the face
$\rb'$ obtained from the lift of the original face $M\x M\x
M\subset M\x M\x \bbar{X}$. We denote by $\rho_{f}$ a smooth
boundary defining function of the face $f\in\{\ff',{\rm rf},{\rm
cf},{\rm lf},\rb'\}$. If $k_A$ and $k_B$ are the lifted kernel of
$A$ and $B$ to respectively $M\x_0M$ and $M\x_0\bbar{X}$ then it is
possible to write the composition as a push-forward
\[k_{A\circ B}.\mu ={\beta_c}_*\Big(\beta_r^*k_A.\beta_l^*k_B.\beta_c^*\mu\Big)\] if $\mu\in
C^{\infty}(M\x_0\bbar{X};\mc{E}_r\otimes \Omega_b^\demi)$. An easy
computation shows that a smooth b-density $\omega$ on $M\x
M\x\bbar{X}$ lifts through $\beta$ to an element
\[\beta^*\omega\in \rho_{\ff'}^{2n}(\rho_{\rm lf}\rho_{\rm rf}\rho_{\rm cf})^n C^\infty(M\x_0M\x_0\bbar{X};
\Omega_b)\] so by considering the lifts through
$\beta_{l},\beta_c,\beta_r$ of boundary defining functions in
 $M\x_0\bbar{X}$, $M\x_0\bbar{X}$ and
$\bbar{X}\x_0\bbar{X}$  respectively we deduce that there is some
index set $K=(K_{\ff'},K_{\rb'},K_{\rm lf},K_{\rm rf},K_{\rm cf})$
such that
\begin{align*}
\lefteqn{\beta_r^*k_A.\beta_l^*k_B.\beta_c^*\mu\in
\mc{A}_{\rm phg}^{K}(M\x_0M\x_0\bbar{X};\Omega_b),}\\
&K_{\ff'}=E_{\ff}+ F_{\ff}+\ndemi,&& K_{\rb'}=F_{\rb},&&  K_{\rm lf}=F_{\ff}+\ndemi,&&
K_{\rm rf}=E_{\ff}+\ndemi, && K_{\rm cf}=F_{\rb}+\ndemi.
\end{align*}
Then from the push-forward theorem of Melrose \cite[Th. 5]{Me}, we obtain that
\begin{align*}
\lefteqn{(\beta_c)_*(\beta_r^*k_A.\beta_l^*k_B.\beta_c^*\mu)\in \mc{A}_{\rm phg}^{H_{\ff},H_{\rb}}(M\x_0\bbar{X},\Omega_b),}\\
&H_{\ff}=(E_{\ff}+ F_{\ff}+\ndemi)\, \extunion \,(F_{\rb}+\ndemi),
&& H_{\rb}=F_{\rb}\,\extunion \,(F_{\ff}+\ndemi)&&
\end{align*}
and this shows the first composition result for $A\circ B$. Remark that to apply \cite[Th.5]{Me}, we need the index of
$K_{\rm rf}>0$, i.e., $E_{\ff}+n/2>0$, but this is automatically satisfied with our assumption that
$A$ is a pseudodifferential operator of negative order on $M$.

The second composition result is very similar, except that there are more boundary faces to consider.
One defines $\Delta_3:=\{(m,m',m'')\in \bbar{X}\x M\x\bbar{X}; m=m'=m''\}$ and let
\[\Delta_{2,j}=\{(m_l,m_c,m_r)\in\bbar{X}\x M\x\bbar{X}; m_i=m_k \textrm{ if }j\notin\{i,k\}\}\]
similarly as before. The triple space is defined like
\eqref{triplespace}, it has now $6$ boundary faces which we denote
as in the case above but with the additional face, denoted $\lb'$,
obtained from the lift of the original boundary $M\x M\x\bbar{X}$.
The same arguments as above show that the
canonical projections from $\bbar{X}\x_0 M\x_0\bbar{X}$ obtained by
projecting-off one factor lift to b-fibrations
$\beta_r,\beta_l,\beta_c$ from the triple space to $\bbar{X}\x_0M$,
$M\x_0\bbar{X}$ and $\bbar{X}\x_0\bbar{X}$. Like for the case above,
one has to push-forward a distribution
$\beta_r^*k_C.\beta_l^*k_B.\beta_c^*\mu$, and a computation gives
that there is an index set $L=(L_{\ff'},L_{\rb'},L_{\lb'},L_{\rm
lf},L_{\rm rf},L_{\rm cf})$ such that
\begin{align*}
\lefteqn{\beta_r^*k_C.\beta_l^*k_B.\beta_c^*\mu\in
\mc{A}_{\rm phg}^{L}(\bbar{X}\x_0M\x_0\bbar{X};\Omega_b),}\\
&L_{\ff'}=F_{\ff}+ G_{\ff}+\ndemi,&& L_{\rb'}=F_{\rb},&& L_{\lb'}=G_{\lb},\\
&L_{\rm lf}=F_{\ff}+\ndemi,&& L_{\rm rf}=G_{\ff}+\ndemi, &&
L_{\rm cf}=F_{\rb}+G_{\rb}+\ndemi
\end{align*}
and by pushing forward through $\beta_c$ using Melrose \cite[Th. 5]{Me}, we deduce that
the result is polyhomogeneous conormal on $\bbar{X}\x_0\bbar{X}$ with the desired index set.
\end{proof}

In order to analyze the composition $K^*K$ in Subsection \ref{caldproj}, we use the symbolic approach since
it is a slightly more precise (in terms of log terms at the diagonal) than the push-forward Theorem in this case,
and a bit easier to compute the principal symbol of the composition.
We are led to study the composition between  classical operators $K$ and $L$  where $K:C^\infty(\bbar{X}) \to C^\infty(\pl\bbar{X})$ 
is an operator in $I^{-1}(\bbar{X}\x\pl\bbar{X})$ and $L:C^\infty(\pl\bbar{X}) \to C^\infty(\bbar{X})$
is in $I^{-1}(\pl\bbar{X}\x\bbar{X})$. 
We show 
\begin{lemma}\label{compositionKL}
Let $K\in I^{-1}(\bbar{X}\x\pl\bbar{X})$ and $L\in I^{-1}(\pl\bbar{X}\x\bbar{X})$ with principal symbols 
$\sigma_K(y;\xi,\mu)$ and $\sigma_L(y;\xi,\mu)$.
The composition $L\circ K$ is a classical pseudodifferential operator on $\pl \bbar{X}$ in the class
$L\circ K\in \Psi^{-1}(\pl \bbar{X})$. Moreover the principal symbol of $LK$ is given by 
\begin{equation}\label{calculsymbpr}
\sigma_{\rm pr}(L\circ K)(y;\mu)=(2\pi)^{-2}\int_{0}^\infty\hat{\sigma}_{L}(y;-x,\mu). \hat{\sigma}_{K}(y;x,\mu)
dx.
\end{equation}
where $\hat\sigma$ denotes the Fourier transform of $\sigma$ in the variable $\xi$.
\end{lemma}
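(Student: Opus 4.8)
The plan is to compute the composition at the level of Schwartz kernels, working in the local model \eqref{operatorsKL}, and to reduce everything to a one–dimensional oscillatory integral over the inward normal variable. First I would localise: using a partition of unity subordinate to charts near $\Delta_\pl$, write the kernels of $K$ and $L$ as sums of the model integrals in \eqref{operatorsKL} (with symbols $a(y;\xi,\mu)$, $b(y;\xi,\mu)$ depending only on the tangential variable, as already reduced in Subsection~\ref{interiortoboundary}) plus kernels smooth away from $\Delta_\pl$; a routine check shows that all ``mixed'' and ``smooth$\times$smooth'' contributions produce smoothing operators on $\pl\bbar{X}$, so only the composition of the two model pieces matters. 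I would phrase the computation with Riemannian half–densities, so that the Jacobian of the interior volume density produces no stray factor (equivalently, work in boundary–normal coordinates and a frame where this Jacobian is $1+O(x')$ at $\Delta_\pl$, the correction contributing only to lower order).

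Writing $(L\circ K)(y,y'')=\int_{\bbar{X}}L_s(y,x',y')\,K_s(x',y';y'')\,dv$ and substituting \eqref{operatorsKL} with frequencies $(\xi',\mu)$ for $L$ and $(\eta,\nu)$ for $K$, the integration in the tangential variable $y'$ produces $(2\pi)^n\delta(\mu-\nu)$ (the boundary has no boundary, so this is a genuine delta), which collapses $\nu=\mu$; the smooth coefficients evaluated away from $y'=y''$ are Taylor–expanded at the diagonal and absorbed, after integration by parts, into correction symbols of strictly lower order. One is left with a kernel on $\pl\bbar{X}\times\pl\bbar{X}$ of the form $(2\pi)^{-n}\int e^{i(y-y'')\mu}\,c(y,y'';\mu)\,d\mu$ with
\begin{equation*}
c(y,y'';\mu)=\frac{1}{(2\pi)^2}\int_{0}^{\infty}\hat b(y;-x',\mu)\,\hat a(y'';x',\mu)\,dx' \ + \ (\text{lower order}),
\end{equation*}
where $\hat{}$ is the Fourier transform in the normal frequency variable and the two factors appear in this order because the symbols are matrix valued and do not commute. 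The main point is then that the half–line integral converges and that $c$ is a classical symbol of order $-1$: replacing $a,b$ by their homogeneous expansions, for $\mu\neq0$ the function $x'\mapsto\hat\sigma_L(y;-x',\mu)$ is smooth away from $0$, has at worst a $\log$ singularity at $x'=0$ and decays rapidly as $x'\to\infty$ (it is the Fourier transform of an order $-1$ symbol in one variable); hence the integrand is $O((\log x')^2)$ near $0$ and Schwartz at infinity, so the integral converges. A scaling change of variable $x'\mapsto x'/|\mu|$ shows each term is homogeneous in $\mu$ of the expected negative integer degree, differentiation under the integral sign (each $\pl_\mu$ improving the order by one) gives the symbol estimates and smoothness in $\mu\neq0$, and cutting the expansions of $a$ and $b$ at a finite order leaves a remainder whose Fourier transform is as regular and decaying as needed, proving the expansion is asymptotic. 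Thus $L\circ K\in\Psi^{-1}(\pl\bbar{X})$ is classical, and after the usual amplitude–to–symbol reduction (Taylor expansion in $y''-y$) its principal symbol is the restriction of the leading term of $c$ to $y''=y$, which is exactly \eqref{calculsymbpr}.

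The hard part is the analysis of the normal half–line integral $\int_0^\infty\hat b(-x',\mu)\hat a(x',\mu)\,dx'$: one must control the conormal ($\log$) singularity at $x'=0$ of the partial Fourier transforms of order $-1$ symbols and their decay at infinity carefully enough to see both that the product is integrable and that the outcome is a classical symbol of order $-1$ — so that, as announced, the composition lands one order better than a naive count and, moreover, carries no $\log$ term at the diagonal. The remaining work — bookkeeping of the smooth volume factor, of the smooth off–diagonal remainders, and of the lower–order corrections from the $y'$- and $y''$-integrations — is routine pseudodifferential calculus; alternatively the mapping and order statement (though not the clean symbol formula) also follows from the push–forward approach of Proposition~\ref{composition} together with Lemma~\ref{rel0calculus1}.
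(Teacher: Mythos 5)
Your proposal is correct, but it takes a genuinely different route from the paper. You work from the start in the physical normal variable: collapse the tangential frequency integral to a near-delta, convert the two normal-frequency integrals into the partial Fourier transforms $\hat a$, $\hat b$, and then analyze the one-dimensional half-line integral $\int_0^\infty \hat b(-x',\mu)\hat a(x',\mu)\,dx'$ directly — log singularity at $x'=0$, rapid decay at infinity, scaling $x'\mapsto x'/|\mu|$ for homogeneity. The paper instead stays entirely in frequency space: it keeps the cutoff $\chi(x',y')$ and its Fourier transform, derives the symbol bounds and the classical expansion by splitting the scaled normal frequency with a cutoff $\theta(\zeta)$, stationary phase in the tangential block, and an explicit Taylor/integration-by-parts argument; only at the very last line does it recognize the resulting principal-symbol expression as the $x'$-integral in \eqref{calculsymbpr}, via the identity $\mathcal F(\mathbf 1_{x>0})=\pi\delta-i/\zeta$. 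Both routes establish the same facts; yours makes the formula \eqref{calculsymbpr} transparent from the outset and is cleaner for the principal symbol, while the paper's frequency-side estimates make it somewhat easier to organize the uniform symbol bounds and the remainder control needed for the asymptotic expansion. Two small imprecisions worth noting in your write-up: the tangential $y'$-integral in a coordinate chart does not literally produce $(2\pi)^n\delta(\mu-\nu)$ — the compactly supported cutoff $\psi(y')$ gives $\hat\psi(\mu-\nu)$, and the reduction to a delta together with the Taylor expansion in $\mu'$ around $\mu$ is exactly what generates the lower-order corrections you allude to; and $\hat a(\cdot,\mu)$ is rapidly decaying but not literally Schwartz (its $x'$-derivatives get more singular at $x'=0$), though rapid decay is all that is needed for the integral and the $|\mu|$-scaling to close.
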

\begin{proof} Since the composition with smoothing operators is easier, we essentially need to understand the composition of singular kernels like \eqref{operatorsKL}. 
Writing the kernel of $K$ and $L$ as a sum  
of elements $K_j,L_j$ of the form 
\eqref{operatorsKL}, we are reduced to analyze in a chart $U$ 
\[L_jK_j f(y)= \frac{1}{(2\pi)^{2n+2}}\int e^{ix'(\xi'-\xi)+iy'(\mu'-\mu)+iy\mu-iy''\mu'}b(y;\xi', \mu)\chi(x',y') a(y'';\xi, \mu') f(y'')dy''d\Omega \]
where $d\Omega:=dy'dx'd\xi d\xi'd\mu d\mu'$, 
 $\chi\in C_0^\infty(U)$ and $a,b$ are compactly supported in $U$ in the $y$ and $y''$ coordinates.
 If $U$ intersects the boundary $\pl \bbar{X}$, then $\chi$ is supported in $x'\geq 0$.
The kernel of the composition $L_jK_j$ in the chart $U$ is then
\[\begin{split}
F(y,y'')=&\, \frac{1}{(2\pi)^{2n+2}}\int e^{ix'(\xi'-\xi)+iy'(\mu'-\mu)+iy\mu-iy''\mu'}b(y;\xi', \mu)\chi(x',y') a(y'';\xi, \mu') d\Omega \\
=&\, \frac{1}{(2\pi)^{n}}\int e^{i\mu(y-y'')} c(y,y'';\mu)d\mu
\end{split}\]
where 
\[c(y,y'';\mu):=\frac{1}{(2\pi)^{n+2}}\int e^{-iy''.\mu'}b(y;\xi',\mu)a(y''; \xi, \mu-\mu')\hat{\chi}(\xi-\xi',\mu') d\mu'd\xi d\xi'.\]
We want to prove that $c(y,y'';\mu)$ is a symbol of order $-1$ with an expansion in homogeneous terms in 
$\mu$ as $\mu\to \infty$. We shall only consider the case where $U\cap \pl\bbar{X}\not=\emptyset$ since the other 
case is simpler. First, remark that in $U$ the function $\chi$ can be taken of the form 
$\chi(x,y)=\varphi(x)\psi(y)$ with $\psi\in C_0^\infty(\rr^n)$ and $\varphi\in C_0^\infty([0,1))$ equal to $1$ in $[0,1/2]$,
therefore $\hat{\chi}(\xi,\mu)=\hat{\varphi}(\xi)\hat{\psi}(\mu)$ with $\hat{\psi}$ Schwartz and by integration by parts one also  has
\[\hat{\varphi}(\xi)= \frac{1}{i\xi}( 1+ \hat{\varphi'}(\xi)).\]  
with $\hat{\varphi'}$ Schwartz.
We first claim that $|\pl_y^\alpha\pl^\beta_{y''}\pl_\mu^\gamma 
c(y,y'';\mu)|\leq C \cjg \mu\cjd^{-1-|\gamma|}$ uniformly in $y,y''$: indeed using the properties of $\hat{\chi}$ and the symbolic assumptions on $a,b$, we have that for any $N\gg |\beta|$, there is a constant 
$C>0$ such that 
\begin{align*}
\lefteqn{|\pl_y^\alpha\pl^\beta_{y''}\pl_\mu^\gamma 
c(y,y'';\mu)| }\\
&&\leq C\int \Big(\frac{1}{1+|\xi|'+|\mu|}\Big)^{1+k}\Big(\frac{1}{1+|\xi|+|\mu|}\Big)^{1+j}\cjg \xi-\xi'\cjd^{-1}\cjg \mu'\cjd^{-N+|\beta|} d\mu'd\xi d\xi'
\end{align*}
where $j+k=|\gamma|$. Using polar coordinates $i\xi+\xi'=re^{i\theta}$ in $\cc\simeq \rr^2$, the integral above is bounded by  
\[C\int \Big(\frac{1}{1+r|\cos(\theta)|+|\mu|}\Big)^{1+k}\Big(\frac{1}{1+r|\sin\theta|+|\mu|}\Big)^{1+j}\frac{1}{1+r|\cos\theta-\sin\theta|}r drd\theta\]
which, by a change of variable $r\to r|\mu|$ and splitting the $\theta$ integral in different regions, is easily shown to be 
bounded by $C\cjg \mu \cjd^{-1-|\beta|}$.

To prove that $LK$ it is a classical operator of order $-1$ (with an expansion in homogeneous terms), 
we can modify slightly the usual proof of composition of pseudo-differential operators, like in Theorem 3.4 of \cite{GrSj}.
Let $\theta\in C_0^\infty(\rr)$ be an even function equal to $1$ near $0$. 
We write for $\mu=\la \omega$ with $\omega\in S^{n-1}$
\[\begin{split}
F(y,y'')=&\, \frac{1}{(2\pi)^{2n+2}}\int e^{ix'(\xi'-\xi)-i(\mu'-\mu)(y''-y')}\chi(x',y')b(y;\xi',\mu)a(y'';\xi,\mu')dy'dx'd\xi d\mu d\mu' d\xi'\\
 =&\, \frac{1}{(2\pi)^{n}}\int e^{i(y-y'')\mu}c(y,y'';\mu) d\mu
 \end{split}
 \]
with 
\[\begin{split}
\lefteqn{c(y,y';\mu)}\\
&= \frac{\la^{n+1}}{(2\pi)^{n+2}}\int e^{-i\la x'\zeta-i\la \sigma. s}\chi(x',y''-s)b(y;\xi',\la\omega)a(y'';\xi'+\la\zeta,\la(\omega+\sigma))
d\Omega d\xi' \\
&= \frac{\la^{n+1}}{(2\pi)^{n+2}}\int e^{-i\la x'\zeta-i\la \sigma. s}\varphi(x')\theta(\zeta)
\psi(y''-s)b(y;\xi',\la\omega)a(y'';\xi'+\la\zeta,\la(\omega+\sigma))d\Omega d\xi' \\
 & \quad +  \frac{\la^{n+1}}{(2\pi)^{n+2}}\int e^{-i\la x'\zeta-i\la \sigma. s}\varphi(x')(1-\theta)(\zeta)
\psi(y''-s)\\
&\qquad \qquad \qquad \quad \times b(y;\xi',\la\omega)a(y'';\xi'+\la\zeta,\la(\omega+\sigma))d\Omega d\xi' \\
& =:c_1(y,y'';\mu)+c_2(y,y'';\mu)
\end{split}\]
where $\Omega=(\sigma,s,\zeta,x')$. Let us denote the phase by $\Phi:= x'\zeta+\sigma.s$. 
The last integral can be dealt with by integrating by parts in $x'$:
\begin{equation}\label{c_2}
\begin{split}
&{c_2(y,y'';\mu)}\\ 
&=\frac{\la^{n}}{i(2\pi)^{n+2}}\int e^{-i\la\Phi}\varphi'(x')\frac{(1-\theta)(\zeta)}{\zeta}
 \psi(y''-s)b(y;\xi',\la\omega)a(y'';\xi'+\la\zeta,\la(\omega+\sigma))d\Omega d\xi'\\
& + \frac{\la^{n}}{i(2\pi)^{n+2}}\int e^{-i\la \sigma.s}\frac{(1-\theta)(\zeta)}{\zeta}
 \psi(y''-s)b(y;\xi',\la\omega)a(y'';\xi'+\la\zeta,\la(\omega+\sigma))d\sigma dsd\zeta d\xi'.
\end{split}
\end{equation}
We can extend $\varphi'=\pl_x\varphi$ by $0$ on $(-\infty,0]$ to obtain a $C_0^\infty(\rr)$ 
function which vanishes near $0$. Since $\varphi'$ now vanishes near $0$, one easily proves that the first integral in \eqref{c_2} is a $O(\la^{-N})$ for all $N$, uniformly in $y,y''$ by
using integrations by parts $N$ times in $x'$ and $\pl_{x'}(e^{-i\la x'\zeta})=-i\la \zeta e^{-i\la x'\zeta}$. 
Now for the second integral in \eqref{c_2}, we use stationary phase in $(\sigma,s)$, 
one has for any $N\in \nn$
\begin{equation}\label{statphase}
\begin{split}
\lefteqn{\int e^{-i\la \sigma. s}\psi(y''-s)\frac{(1-\theta)(\zeta)}{\zeta}b(y;\xi',\la\omega)a(y'';\xi'+\la\zeta,\la(\omega+\sigma))d\sigma ds}\\
&= (2\pi)^n\frac{(1-\theta)(\zeta)}{\zeta}(\sum_{|\alpha|\leq N} \frac{i^{|\alpha|}}{\alpha!}\pl^\alpha\psi(y'')b(y;\xi',\mu)\pl^\alpha_\mu a(y'';\xi'+\la\zeta,\mu) +S_N(y,y'';\xi',\zeta,\mu))
\end{split}
\end{equation}
with  $|S_N(y,y'';\xi',\zeta,\mu)|\leq C \cjg (\xi',\mu)\cjd^{-1} \cjg (\xi'+|\mu|\zeta,\mu)\cjd^{-1-N}$.
Now, both $a$ and $b$ can be written under the form
$a=a_N+a_h$ and $b=b_h+b_N$ where $a_N(y;\xi,\mu),b_N(y;\xi,\mu)$ 
are bounded in norm by $C\cjg (\xi,\mu)\cjd^{-N}$ and $a_h(y;\xi,\mu),b_h(y,\xi,\mu)$ are finite sums of homogeneous
functions $a_h^{-j},b_h^{-j}$ of order $-j$ in $|(\xi,\mu)|>1$ for $j=1,\dots N-1$. Replacing $a,b$ in \eqref{statphase} by their 
decomposition $a_N+a_h$ and $b_N+b_h$ we get that $c(y,y'',\mu)$ is the sum of a 
term bounded uniformly by $C\cjg \mu\cjd^{-N+2}$ and some terms of the form 
\[\la \int  \frac{1-\theta(\zeta)}{\zeta}b_h^{-j}(y;\xi',\mu)\pl^\alpha \psi(y'')\pl_\mu^\alpha a_h^{-k}(y'';\xi'+\la\zeta,\mu)d\zeta d\xi'.\]
The integral is well defined and is easily seen (by changing variable $\xi'\to \la\xi'$) to be homogeneous  of order $-k-j-|\alpha|+1$
for $\la=|\mu|>1$ . This shows that $c_2(y,y'';\mu)$ has an expansion in homogeneous terms.
It remains to deal with $c_1$. We first apply stationary phase in the $(\sigma,s)$ variables and we get 
\begin{align*}
c_1(y,y'';\mu)= &\frac{\la}{(2\pi)^2}\sum_{|\alpha|\leq N} \frac{i^{|\alpha|}}{\alpha!}\pl^\alpha \psi(y'')\int e^{-i\la x'\zeta}b(y,\xi',\mu)
\varphi(x')\theta(\zeta)\pl^\alpha_\mu a(y'';\xi'+\la\zeta,\mu)dx'd\xi' d\zeta \\
&+\int \varphi(x')S_N'(y,y'';\xi',\zeta,\mu)d\xi' d\zeta dx'
\end{align*}
for some $S_N'$ which will contribute $O(\la^{-N-2})$ like for $c_2$ above. Decomposing $a(y,\xi,\mu)$ and $b(y,\xi,\mu)$ as above in homogeneous terms outside a compact set in $(\xi,\mu)$, it is easy to see that up to a $O(\la^{-N})$ term, we can reduce the 
analysis of $c_1(y,y'';\mu)$ to the case where $a,b$ are replaced by terms $a_h^{-j},b_h^{-k}$ homogeneous
of orders $-j,-k$ outside compacts. We then have 
\begin{equation}\label{homoexp}
\begin{split}
\lefteqn{\int e^{-i\la x'\zeta}b_h^{-j}(y,\xi',\mu)
\varphi(x')\theta(\zeta)\pl^\alpha_\mu a_h^{-k}(y'';\xi'+\la\zeta,\mu)dx'd\xi' d\zeta}\\
&=\la^{-j-k-|\alpha|+1}\int e^{-i\la x'\zeta}b_h^{-j}(y,\xi',\omega)
\varphi(x')\theta(\zeta)\pl^\alpha_\mu a_h^{-k}(y'';\xi'+\zeta,\omega)dx'd\xi' d\zeta
\end{split}
\end{equation}
and we write by Taylor expansion at $\zeta=0$
\begin{equation}\label{taylorexp}
\theta(\zeta)\pl_\mu^\alpha a_h^{-k}(y'';\xi'+\zeta,\omega)=\theta(\zeta)\pl_\mu^\alpha a_h^{-k}(y'';\xi',\omega)+
\zeta \theta(\zeta) a'(y'',\xi',\zeta,\omega)
\end{equation}
for some $a'(y'';\xi',\zeta,\mu)$ smooth in $y''$ and homogeneous of degree $-k-1$ in $|(\xi,\zeta,\mu)|>1$.  
For the term with $a'$, we have by integration by parts in $x'$
\begin{equation}\label{a'}
\begin{split}
\lefteqn{\int \zeta e^{-i\la x'\zeta}b_h^{-j}(y,\xi',\omega)
\varphi(x')\theta(\zeta) \pl^\alpha_\mu  a'(y'';\xi',\zeta,\omega)dx'd\xi' d\zeta}\\
&=(i\la)^{-1}\int e^{-i\la x'\zeta}\varphi'(x')b_h^{-j}(y,\xi',\omega)\theta(\zeta)
\pl^\alpha_\mu a'(y'';\xi',\zeta,\omega)dx'd\xi' d\zeta\\ 
&\quad +(i\la)^{-1}\int b_h^{-j}(y,\xi',\omega)\theta(\zeta)\pl^\alpha_\mu 
a'(y'';\xi',\zeta,\omega)d\xi' d\zeta
\end{split}\end{equation}
and the first term is $O(\la^{-\infty})$ by non-stationary phase while the second one is homogeneous of order $-1$ in $\la$ (the integrals in all variables are converging). It remains to deal with the first term in \eqref{taylorexp}, we notice that $\theta$ is even and so 
\[\int \varphi(x') e^{-i\la x'\zeta}\theta(\zeta)dx'd\zeta=\la^{-1}\int \hat{\theta}(x')\varphi(x'/\la)dx'
=\la^{-1}\pi-\int \hat{\theta}(x')(1-\varphi(x'/\la))dx'.\]
Since $\hat{\theta}$ is Schwartz, the last line clearly has an expansion 
of the form $\pi\la^{-1}+O(\la^{-\infty})$ for some constant $C$, and combining with \eqref{a'}, we deduce that  \eqref{homoexp}  is thus homogeneous of degree 
$\la^{-j-k-1}$ modulo $O(\la^{\infty})$. This ends the proof of the fact that $KL$ is a classical pseudo-differential operator on $M$. 

Now, we compute the principal symbol. According to the discussion above, it is given by 
\begin{align*}
&-i(2\pi)^{-2}\int  \frac{1}{\zeta}\Big(\sigma_{L}(y;\xi',\mu)((1-\theta(\zeta))\sigma_{K}(y;\xi'+\zeta,\mu)+
 \theta(\zeta)\zeta\sigma_K'(y;\xi',\zeta,\mu)\Big)d\xi' d\zeta\\
&+(2\pi)^{-2}\pi \int \sigma_L(y,\xi',\mu) \sigma_K(y;\xi',\mu)d\xi'
\end{align*}
where $\zeta\sigma_K'(y;\xi',\zeta,\mu):=\sigma_K(y;\xi'+\zeta,\mu)-\sigma_K(y;\xi',\mu)$.
It is straightforward to see that this is equal to \eqref{calculsymbpr} by using the fact that the Fourier transform of 
the Heaviside function is the distribution $\pi\delta-i/\zeta$. Notice that the integral \eqref{calculsymbpr} makes sense 
since $\sigma_K,\sigma_L$ are $L^2$ in the $\xi'$ variable.
\end{proof}

\end{document}